\documentclass[preprint]{imsart}

\RequirePackage{amsthm,amsmath,amsfonts,amssymb}
\RequirePackage{natbib}
\bibliographystyle{imsart-nameyear}
\RequirePackage[colorlinks,citecolor=blue,urlcolor=blue]{hyperref}
\RequirePackage{graphicx}

% provide arXiv number if available:
\arxiv{arXiv:1805.08992}

\startlocaldefs
\usepackage[T1]{fontenc}
\usepackage[utf8]{inputenc}
\usepackage[lofdepth,lotdepth]{subfig}

\newcommand{\vecteur}{\boldsymbol}
\newcommand{\matrice}{\boldsymbol}

\numberwithin{equation}{section}
\theoremstyle{plain}
\newtheorem{thm}{Theorem}[section]

\DeclareMathOperator{\Tr}{Tr}
\DeclareMathOperator{\Ker}{Ker}

\newcommand{\R}{\mathbb{R}}
\newcommand{\N}{\mathbb{N}}
\newcommand{\Z}{\mathbb{Z}}
\newcommand{\trans}{^{\top}}

\newcommand{\bs}{\boldsymbol}

\providecommand{\floor}[1]{\left \lfloor #1 \right \rfloor }

\newcommand{\co}{ \bs{ \Sigma }_{ \theta } }
\newcommand{\de}{ \frac{d}{d \theta } \co }

\newcommand{\likelihood}{L_{\vecteur{y}}(\theta)}
\newcommand{\allparamlikelihood}{L_{\vecteur{y}}(\vecteur{\beta}, \sigma^2, \theta)}
\newcommand{\nulltrendallparamlikelihood}{L_{\bs{y}} (\sigma^2, \theta)}
\newcommand{\nouveauD}{\matrice{Z}}

\newtheorem{prop}[thm]{Proposition}

\newtheorem{lem}[thm]{Lemma}
\newtheorem{defn}[thm]{Definition}

\newtheorem{rmq}{Remark}

\endlocaldefs

\begin{document}

\begin{frontmatter}

\title{Propriety of the reference posterior distribution in Gaussian Process modeling}
\runtitle{Propriety of the reference posterior distribution}

\author{\fnms{Joseph} \snm{Muré}\corref{}\ead[label=e1]{joseph.mure@edf.fr}}

\address{EDF Recherche et Développement\\
Dpt. PRISME\\
6 quai Watier\\
78401 Chatou\\
France\\
\printead{e1}\\
}
\affiliation{Université Paris Diderot and EDF R\&D}

\runauthor{J. Muré}

\begin{abstract}
In a seminal article, \citet{BDOS01} compare several objective prior 
distributions for the parameters of Gaussian Process models with 
isotropic correlation kernel. 
The reference prior distribution stands out among them insofar as 
it always leads to a proper posterior. 
They prove this result for rough correlation kernels - 
Spherical, Exponential with power $\rho<2$, 
Matérn with smoothness $\nu<1$. 
This paper provides a proof for smooth correlation kernels - 
Exponential with power $\rho=2$, 
Matérn with smoothness $\nu \geqslant 1$, Rational Quadratic -
along with tail rates of the reference prior for these kernels.
\end{abstract}

\begin{keyword}[class=MSC]
\kwd[Primary ]{62F15}
\kwd[; secondary ]{62M30}
\kwd{60G15}
\end{keyword}

\begin{keyword}
\kwd{Gaussian process}
\kwd{Kriging}
\kwd{correlation function}
\kwd{Jeffreys prior}
\kwd{reference prior}
\kwd{integrated likelihood}
\kwd{posterior propriety}
\end{keyword}

\end{frontmatter}

\section{Introduction}

Gaussian processes are often used to emulate 
unknown functions from some space $\R^r$ ($r \geqslant 1$)
to $\R$ \citep{Ras06}.
Interpreted in Bayesian terms,
this analysis 
sets the distribution of a Gaussian process as the prior
distribution of the unknown function.
The posterior is still Gaussian, but conditioned on the values of the function 
observed at specific points of $\R^r$. 
In order to show the performance of such models,
upper bounds for posterior rates have been obtained by \citet{VZ08}
and lower bounds by \citet{Cas08}.
\medskip

The Gaussian prior distribution is usually parametric,
with the parameters typically set to the maximum likelihood estimator.
Another approach is hierarchical Bayesian modeling, where
a prior on the parameters of the prior 
Gaussian process distribution is defined.
This approach is extensively discussed in Chapter 6
of \citet{BCG04}.
\medskip 

Given prior knowledge about the parameters of the Gaussian prior
is often lacking, there is value in objective elicitation 
of their prior distribution. \medskip

However, \citet{DOKS97} and \citet{Ste99} noted that
commonly used noninformative priors sometimes failed to yield proper posteriors.
\citet{BDOS01} were the first to thoroughly investigate the issue. 
Among several prior distributions -- truncated priors, vague priors, 
Jeffreys-rule and independence Jeffreys prior -- 
they showed that the reference prior \citep{Ber05} is 
the most satisfying choice for a default prior distribution. 
This is due to the fact that, for all isotropic correlation kernels 
studied by \citet{BDOS01}, the reference prior yields 
a proper posterior distribution. 
In the following, whenever we mention a ``prior'',
it is the prior on the parameters of the Gaussian process distribution,
not the Gaussian process distribution itself.\medskip

Posterior propriety is necessary to Bayesian procedures aiming to quantify parameter uncertainty: it would make no sense to express parameter uncertainty through a posterior distribution that does not integrate to a finite mass.
Moreover, in spatial models, one may want to take parameter uncertainty into account when performing prediction.
Prediction can be averaged over a proper posterior distribution on parameters,
not an improper one. \medskip

In this article, we complete the proof by \citet{BDOS01} that the reference posterior is proper.
Because of the difficulty involved in obtaining a satisfying default prior distribution which consistently yields a proper posterior, it is important to ascertain that the reference prior actually does. Indeed, a vast literature builds upon this result. 

Provided some additional conditions on the design set and the mean function 
are verified,
\citet{paulo05} states that the reference posterior is also proper 
with anisotropic product correlation kernels that have specific properties. 
As \citet{paulo05} notes, these properties are also necessary
to make the proof from \citet{BDOS01} work.

He then warns the reader that the Squared Exponential kernel (Power Exponential
with power $\rho=2$) does not satisfy one of them.

\citet{RSH12} investigate the propriety of reference posteriors for isotropic 
correlation kernels with an additional noise term (nugget effect).
They show that the reference prior is not the same 
for different parameter orders, 
but that all reference priors lead to a proper posterior.
However, the correlation kernel must have the same properties as those 
required by \citet{BDOS01}.

\citet{KP12} consider the same setting and focus on the reference prior 
with a particular parameter order. They provide an application to the analysis of zinc concentrations in the French river Meuse.

\citet{RSS13} return to the setting explored by \citet{paulo05} -- anisotropic product correlation kernels -- but relax his 
conditions on the mean function.

\citet{GWB18} go further still and also relax his conditions 
on the design set. Moreover, they establish reference posterior propriety
with anisotropic product covariance kernels and an additional noise term (nugget effect). Their work thus generalizes and combines results from \citet{paulo05} and \citet{RSS13} on one side, \citet{RSH12} and \citet{KP12} on the other.

However, all these papers prove reference posterior propriety under similar assumptions
as those made by \citet{BDOS01}.
They rely on properties that only rough correlation kernels possess.
\medskip

The main result of this work is 
Theorem \ref{THM:REFERENCE_POSTERIOR_PROPER}, 
which ensures that the reference prior leads to a 
proper posterior distribution 
for a large class of smooth isotropic kernels.
Secondary results include bounds on the reference prior density
and on the function obtained
after integrating several parameters
out of the likelihood. \medskip

The paper is organized as follows.
Section \ref{Sec:Setting} describes the Gaussian Process models considered by \citet{BDOS01}.
Section \ref{Sec:smoothness_correlation_kernel} shows that the proof of the propriety of the reference posterior provided by \citet{BDOS01} only applies to Gaussian Process models with rough correlation kernels -- Spherical, Exponential with power $\rho<2$, Matérn with smoothness $\nu<1$.
Section \ref{Sec:propriety} provides bounds on reference prior and integrated likelihood
which are then used to prove that the reference posterior is also proper for models with smoother correlation kernels, including
Exponential kernels with power $\rho=2$,
Matérn kernels with smoothness $\nu \geqslant 1$ and Rational Quadratic kernels.
While Section \ref{Sec:propriety} provides bounds applying to all cases, tighter bounds applying to specific cases only
are obtained in Appendix \ref{App:tail_rates}.
\medskip

\section{Setting} \label{Sec:Setting}
  
\cite{BDOS01} consider Gaussian Process models (also known as Universal Kriging models) with isotropic kernels. 
This article is set in their framework and borrows most of its notations from them.
Define $\| \cdot \|$ as the usual Euclidean norm if applied to a vector and as the Frobenius norm if applied to a matrix. We denote integer intervals by $[\![\cdot,\cdot]\!]$: $[\![1,4]\!]$ for instance is the set $\{1,2,3,4\}$. \medskip

In Universal Kriging, an unknown function from a domain  $\mathcal{D} \subset \R^r$ ($r \in \Z_+$) to $\R$ is assumed to be a realization of a Gaussian process $Y$. The mean function $f$ of the Gaussian process is assumed to belong to some known vector space $\mathcal{F}_p$ of dimension $p \in \N$. If $p$ is non-zero, once a basis $(f_j)_{j \in [\![1,p]\!]}$ of $\mathcal{F}_p$ has been set, $f$ can be parametrized by $\bs{\beta} = (\beta_1,...,\beta_p) \trans \in \R^p$ such that $f = \sum_{j=1}^p \beta_j f_j$.
\medskip

$Y - f$ is assumed to be a stationary Gaussian process
based on an isotropic correlation kernel. 
An isotropic correlation kernel is a function 
$K : [0,+\infty) \rightarrow [-1, 1]$
such that for any positive integer $n$
and any collection of $n$ distinct points
$(\bs{x}^{(i)})_{i \in [\![1,n]\!]}$ within $\mathcal{D}$,
the symmetric $n \times n$ matrix $\bs{\Sigma}$
with $(i,i')$-th element
$K(\| \bs{x}^{(i)}  - \bs{x}^{(i')}\| )$
is a positive definite correlation matrix.
Necessarily, $K(0)=1$.

The covariance function of the Gaussian process $Y$ is $\sigma^2 K_\theta$,
where $K_\theta$ is the correlation kernel parametrized by $\theta \in (0,+\infty)$ and defined by $K_\theta(\cdot) = K(\cdot/\theta)$, making $\sigma^2 \in (0,+\infty)$ the variance of $Y(\bs{x})$ for every $\bs{x} \in \mathcal{D}$. \medskip

Fix $n \in \Z_+$ and fix a collection of $n$ distinct points
$(\bs{x}^{(i)})_{i \in [\![1,n]\!]}$. \medskip

Let this collection be the design set, i.e. the set of points where $Y$ is observed. 
$\left(Y(\bs{x}^{(1)}),...,Y(\bs{x}^{(n)})\right) \trans$ is a Gaussian vector. 
$\left(f(\bs{x}^{(1)}),...,f(\bs{x}^{(n)}) \right) \trans$ is its mean vector and $\sigma^2 \co$ its covariance matrix, with $\co$ being the $n \times n$ matrix with $(i,i')$-th element $K_\theta(\| \bs{x}^{(i)}  - \bs{x}^{(i')}\| )$. Table \ref{Tab:Kernel_families} provides the definition of several correlation kernels. \medskip

\begin{table}[!ht]
\begin{center}
\begin{tabular}{|c|c|c|}
\hline 
\textbf{Kernel} & $K_\theta(t)$ (with $t \in (0,+\infty)$) & parameter range\\ 
\hline 
Spherical & $\left(1 - \frac{3}{2} \left( \frac{t}{\theta} \right) + \frac{1}{2} \left( \frac{t}{\theta} \right)^3 \right) \bs{1}_{ \{ t \leqslant \theta \} }$ & $\emptyset$\\
Power Exponential & $\exp \left\{ - \left( \frac{t}{\theta} \right)^\rho \right\}$ & $\rho \in (0,2]$ \\
Rational Quadratic &  $ \left( 1 + \left( \frac{t}{\theta} \right)^2 \right)^{-\nu}$ & $\nu \in (0,+\infty)$
\\
Matérn & $ \Gamma(\nu)^{-1} 2^{1-\nu} \left( 2 \sqrt{\nu} \frac{t}{\theta} \right)^{\nu} \mathcal{K}_\nu \left( 2 \sqrt{\nu} \frac{t}{\theta} \right)$ & $\nu \in (0,+\infty)$
\\
\hline
\end{tabular} 
\end{center}
\caption{Formulas for several correlation kernel families. The Spherical kernel can only be used if the design space is of dimension 1, 2 or 3, because it is not positive definite for greater dimensions \citep{Ste99}. 
$\mathcal{K}_\nu$ is the modified Bessel function of second kind with parameter $\nu$ \citep[section 9.6]{AS64}. This parametrization of the Matérn family is recommended by \citet{HW94}. To recover the one used by \citet{BDOS01}, simply replace $2 \sqrt{\nu} t$ by $t$.}
\label{Tab:Kernel_families}
\end{table}

If $p$ is non-zero, let $\bs{H}$ denote the $n \times p$ matrix with $(i,j)$-th element $f_j (\bs{x}^{(i)})$.
Then 
$\left(f(\bs{x}^{(1)}),...,f(\bs{x}^{(n)})\right) \trans = \bs{H} \bs{\beta}$. 
If $p=0$, then we adopt the convention that any term involving $\bs{H}$ can be ignored.
Let us denote the observed value of the random vector 
$\left(Y(\bs{x}^{(1)}),...,Y(\bs{x}^{(n)})\right) \trans$ 
by $\bs{y} = (y_1,...,y_n) \trans$. 
The likelihood function of the parameter triplet $(\bs{\beta},\sigma^2,\theta) \trans $ has the following expression:

\begin{equation} \label{Eq:vraisemblance}
L( \bs{y} \; | \; \bs{\beta}, \sigma^2 , \bs{ \theta } ) = 
\left( \frac{ 1 }{ 2 \pi \sigma^2 } \right) ^ { \frac{n}{2} } | \co | ^ {- \frac{1}{2} } \exp \left\{ - \frac{ 1 }{ 2 \sigma^2 } (\bs{y}-\bs{H} \bs{\beta}) \trans \co^{-1} (\bs{y} - \bs{H} \bs{\beta}) \right\} .
\end{equation}

In order for the model to be identifiable, assume that $p<n$ and that $\bs{H}$ has rank $p$. \medskip

Let us recall the general definition of the reference prior and 
how it is derived in this setting. \medskip

For smooth one-dimensional parametric families, the reference prior coincides with the Jeffreys-rule prior \citep{CB94}. For smooth finite-dimensional parametric families, the reference prior algorithm requires the user to define groups of dimensions of the parameter and rank them. The reference prior is then defined iteratively \citep{Ber05}:

\begin{enumerate}
\item Compute the Jeffreys-rule prior on the lowest-ranking group of dimensions conditionally on all others. 
\item Average the likelihood function over this prior. 
\item Compute the Jeffreys-rule prior (based on the integrated likelihood function) on the second-lowest-ranking group of dimensions conditionally on all higher-ranking dimensions. 
\item Average the integrated likelihood function over this second prior. 
\item Continue the process until the Jeffreys-rule prior on the highest-ranking group of dimensions has been computed. 
\item The reference prior is defined as the product of all successively computed priors.
\end{enumerate}

\citet{BDOS01} treat $\vecteur{\beta}$ as the lowest-ranking (possibly multidimensional) parameter
and view $(\sigma^2, \theta)$ as a group of 2 equal-ranking parameters
when they apply the reference prior algorithm.
We denote this ordering by $\vecteur{\beta} \prec (\sigma^2, \theta)$.
This choice is not arbitrary: if $\sigma^2$ and $\theta$ were known,
then the covariance matrix $\sigma^2 \co$ would be known.
Since the model is Gaussian, we would then know which linear transformation
to apply to the observations to make them mutually independent.
In the case of Simple Kriging, $\vecteur{\beta}$ is not used
and their reference prior is simply the Jeffreys-rule prior.
Note that $\theta$ is one-dimensional because all correlation kernels
considered by \citet{BDOS01} - which are also those covered in the present article -
are isotropic. \medskip

\citet{RSH12} show that if we split the parameters further,
with $\vecteur{\beta}$ remaining the lowest-ranking parameter,
$\sigma^2$ being of middle rank
and $\theta$ being the highest-ranking parameter,
the reference prior algorithm yields the same reference prior.
In other words, the reference prior for the ordering $\vecteur{\beta} \prec \sigma^2 \prec \theta$
is the same as the reference prior for the ordering $\vecteur{\beta} \prec (\sigma^2, \theta)$.
It is this prior we consider in the present article. \medskip

To express it conveniently, denote $\bs{Q}_\theta := \bs{I}_n - \bs{H} \left( \bs{H} \trans \co^{-1} \bs{H} \right)^{-1} \bs{H} \trans \co^{-1}$. \medskip

If $p=0$, $\bs{Q}_\theta := \bs{I}_n$. Also fix $\bs{W}$, an $n \times (n-p)$ matrix such that $\bs{W} \trans \bs{W} = \bs{I}_{n-p}$ and $\bs{H} \trans \bs{W}$ is the $p \times (n-p)$ null matrix. The columns of $\bs{W}$ form an orthonormal basis of the orthogonal complement of the subspace of $\R^n$ spanned by the columns of $\bs{H}$.
If $p=0$, fix $\bs{W}$ as an orthogonal matrix, for example $\bs{I}_n$. \medskip

If $p>0$, the matrix $\bs{W}$ can for instance be constructed by computing a Singular Value Decomposition (SVD) of $\bs{H}$. Let us write this decomposition $\bs{H}=\bs{U} \bs{S} \bs{V} \trans $. $\bs{U}$ and $\bs{V}$ are orthogonal matrices of size $n \times n$ and $p \times p$ respectively, and $\bs{S}$ is an $n \times p$ matrix whose only non-null entries are on the main diagonal. Therefore the last $n-p$ rows of $\bs{S}$ are filled with zeros. The matrix $\bs{W}$ can then be defined as the $n \times (n-p)$ matrix formed by the last $n-p$ columns of $\bs{U}$. \medskip

The next two propositions give formulas for the reference prior density and for the integrated likelihood.
The latter is obtained by integrating the likelihood against the reference prior on $\vecteur{\beta}$ conditionally on $\sigma^2$ and $\theta$ and against the reference prior on $\sigma^2$ conditionally on $\theta$.

\begin{prop} \label{Prop:rep_prior}
The reference prior with ordering 
$\bs{\beta} \prec (\sigma^2, \theta)$
or
$\bs{\beta} \prec \sigma^2 \prec \theta$ is $\pi(\bs{\beta}, \sigma^2, \theta) \propto \left(\sigma^2 \right)^{-1} \pi(\theta)$, where

\begin{equation} \label{Eq:Gibbs_ref_prior_theta_berger}
\pi(\theta) \propto 
\sqrt{\Tr \left[ \left\{ \left( \frac{ d }{d \theta }  \bs{ \Sigma }_{ \theta } \right)  \bs{ \Sigma }_{ \theta }^{-1} \bs{Q}_\theta \right\}^2 \right] 
-
\frac{1}{n-p} \left[ \Tr \left\{\left( \frac{ d }{d \theta }  \bs{ \Sigma }_{ \theta } \right)  \bs{ \Sigma }_{ \theta }^{-1} \bs{Q}_\theta \right\}\right]^2 }.
\end{equation}

Denoting $ \bs{W} \trans \co \bs{W}$ by $\bs{\Sigma}_\theta^{\bs{W}}$, $\pi(\theta)$ can also be written as:

\begin{equation} \label{Eq:Gibbs_ref_prior_theta}
\pi(\theta) \propto 
\sqrt{\Tr \left[ \left\{ \left( \frac{ d }{d \theta }  \bs{\Sigma}_\theta^{\bs{W}} \right)  \left( \bs{\Sigma}_\theta^{\bs{W}} \right)^{-1} \right\}^2 \right] 
-
\frac{1}{n-p} \left[ \Tr \left\{\left( \frac{ d }{d \theta }  \bs{\Sigma}_\theta^{\bs{W}} \right) \left( \bs{\Sigma}_\theta^{\bs{W}} \right)^{-1} \right\}\right]^2 }.
\end{equation}
\end{prop}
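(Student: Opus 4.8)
The plan is to run the reference-prior algorithm of the Introduction on the likelihood \eqref{Eq:vraisemblance} with the ordering $\bs{\beta} \prec \sigma^2 \prec \theta$, exploiting that $\bs{\beta}$ is a location parameter and $\sigma^2$ a scale parameter so that the first two conditional priors and the two marginalizations are explicit, and then to reconcile the two stated expressions for $\pi(\theta)$ by a pair of linear-algebra identities. I would begin by noting that the Fisher information for $\bs{\beta}$ given $(\sigma^2,\theta)$ is $\sigma^{-2}\bs{H}\trans\co^{-1}\bs{H}$, which does not depend on $\bs{\beta}$, so step 1 yields the flat conditional prior $\pi(\bs{\beta}\mid\sigma^2,\theta)\propto 1$. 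Averaging \eqref{Eq:vraisemblance} over it is a Gaussian integral: completing the square at the generalized-least-squares estimate and using $\co^{-1}\bs{Q}_\theta = \co^{-1} - \co^{-1}\bs{H}(\bs{H}\trans\co^{-1}\bs{H})^{-1}\bs{H}\trans\co^{-1}$ produces the residual quadratic form $\bs{y}\trans\co^{-1}\bs{Q}_\theta\bs{y}$, whence the integrated likelihood is proportional to $(\sigma^2)^{-(n-p)/2}\lvert\co\rvert^{-1/2}\lvert\bs{H}\trans\co^{-1}\bs{H}\rvert^{-1/2}\exp\{-\tfrac{1}{2\sigma^2}\bs{y}\trans\co^{-1}\bs{Q}_\theta\bs{y}\}$.

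The cleanest way to handle the remaining two parameters is to recognize this as the likelihood of the contrast vector $\bs{z} = \bs{W}\trans\bs{y}$. Since $\bs{H}\trans\bs{W} = \bs{0}$, one has $\bs{z} \sim \mathcal{N}(\bs{0}, \sigma^2\bs{\Sigma}_\theta^{\bs{W}})$ with $\bs{\Sigma}_\theta^{\bs{W}} = \bs{W}\trans\co\bs{W}$, a model free of $\bs{\beta}$; the equivalence with the integrated likelihood above rests on the two classical identities $\bs{y}\trans\co^{-1}\bs{Q}_\theta\bs{y} = \bs{z}\trans(\bs{\Sigma}_\theta^{\bs{W}})^{-1}\bs{z}$ and $\lvert\co\rvert\,\lvert\bs{H}\trans\co^{-1}\bs{H}\rvert = \lvert\bs{H}\trans\bs{H}\rvert\,\lvert\bs{\Sigma}_\theta^{\bs{W}}\rvert$, the second factor being independent of $\theta$. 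On this zero-mean Gaussian model the conditional Jeffreys prior for the scale parameter is $\pi(\sigma^2\mid\theta)\propto(\sigma^2)^{-1}$ (step 3), and integrating $\sigma^2$ out against it (a Gamma integral, step 4) leaves the marginal likelihood $L(\theta)\propto\lvert\bs{\Sigma}_\theta^{\bs{W}}\rvert^{-1/2}\big(\bs{z}\trans(\bs{\Sigma}_\theta^{\bs{W}})^{-1}\bs{z}\big)^{-(n-p)/2}$.

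For the final step I would appeal to the two-group reference-prior formula for regular models: the marginal prior for the interest parameter $\theta$ is the square root of the Schur complement $I_{\theta\theta} - I_{\theta\sigma^2}^2/I_{\sigma^2\sigma^2}$ of the $2\times2$ Fisher information of the $(\sigma^2,\theta)$ model. Writing $\bs{M} = \big(\tfrac{d}{d\theta}\bs{\Sigma}_\theta^{\bs{W}}\big)(\bs{\Sigma}_\theta^{\bs{W}})^{-1}$ and using $I_{ab} = \tfrac12\Tr(\bs{\Omega}^{-1}\partial_a\bs{\Omega}\,\bs{\Omega}^{-1}\partial_b\bs{\Omega})$ for $\bs{\Omega} = \sigma^2\bs{\Sigma}_\theta^{\bs{W}}$, one gets $I_{\sigma^2\sigma^2} = (n-p)/(2\sigma^4)$, $I_{\sigma^2\theta} = \Tr(\bs{M})/(2\sigma^2)$ and $I_{\theta\theta} = \Tr(\bs{M}^2)/2$, so the Schur complement equals $\tfrac12\big(\Tr(\bs{M}^2) - \tfrac{1}{n-p}\Tr(\bs{M})^2\big)$, whose square root is exactly \eqref{Eq:Gibbs_ref_prior_theta} up to the irrelevant constant $\sqrt{1/2}$. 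The delicate conceptual point — and the one I expect to need the most care, leaning on \citet{BDOS01} — is the justification that the exact-marginalization algorithm genuinely reproduces this conditional Fisher-information formula; this is where $\sigma^2$ being a scale parameter is essential.

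It then remains to show that \eqref{Eq:Gibbs_ref_prior_theta} equals \eqref{Eq:Gibbs_ref_prior_theta_berger}, which is pure linear algebra and, to my mind, the concrete heart of the proposition. I would establish the projection identity $\co^{-1}\bs{Q}_\theta = \bs{W}(\bs{\Sigma}_\theta^{\bs{W}})^{-1}\bs{W}\trans$: both sides are symmetric, annihilate the columns of $\bs{H}$, and map $\co\bs{W}$ to $\bs{W}$; since $\bs{W}\trans\co\bs{W}$ is invertible and $\bs{H}\trans\bs{W} = \bs{0}$, the ranges $\mathrm{col}(\co\bs{W})$ and $\mathrm{col}(\bs{H})$ intersect trivially and together span $\R^n$, so the two sides agree everywhere. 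Substituting this into $\bs{A} := \big(\tfrac{d}{d\theta}\co\big)\co^{-1}\bs{Q}_\theta = \big(\tfrac{d}{d\theta}\co\big)\bs{W}(\bs{\Sigma}_\theta^{\bs{W}})^{-1}\bs{W}\trans$ and using cyclicity of the trace together with $\bs{W}\trans\big(\tfrac{d}{d\theta}\co\big)\bs{W} = \tfrac{d}{d\theta}\bs{\Sigma}_\theta^{\bs{W}}$ gives $\Tr(\bs{A}) = \Tr(\bs{M})$ and $\Tr(\bs{A}^2) = \Tr(\bs{M}^2)$, so the two radicands coincide term by term and the two forms of $\pi(\theta)$ are identical.
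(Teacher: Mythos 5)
Your proposal is correct, but it does substantially more work than the paper, which disposes of the first display by citing \citet{RSH12} and of the second by invoking Lemma \ref{Lem:switch_pov}. What you add is an explicit reconstruction of the reference-prior algorithm: flat conditional prior for $\bs{\beta}$ (constant Fisher information), the Gaussian marginalization producing $\bs{y}\trans\co^{-1}\bs{Q}_\theta\bs{y}$, the reduction to the contrast model $\bs{z}=\bs{W}\trans\bs{y}\sim\mathcal{N}(\bs{0},\sigma^2\bs{\Sigma}_\theta^{\bs{W}})$, the $(\sigma^2)^{-1}$ conditional Jeffreys prior, and the identification of $\pi(\theta)$ with the square root of the Schur complement $I_{\theta\theta}-I_{\theta\sigma^2}^2/I_{\sigma^2\sigma^2}$, whose evaluation $\tfrac12\bigl(\Tr(\bs{M}^2)-\tfrac{1}{n-p}\Tr(\bs{M})^2\bigr)$ you compute correctly. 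This route is more informative than the paper's bare citation, and it has the incidental advantage of landing directly on the $\bs{W}$-form \eqref{Eq:Gibbs_ref_prior_theta}, so that \eqref{Eq:Gibbs_ref_prior_theta_berger} becomes the derived expression rather than the starting point. The one genuinely non-elementary step, which you rightly flag, is the claim that the exact-marginalization algorithm (Jeffreys prior of the \emph{integrated} likelihood $L(\bs{y}\mid\theta)$, which is not integrable in $\bs{z}$ and whose expected information must be defined with care) reproduces the conditional Fisher-information Schur complement; neither you nor the paper proves this in-house, and deferring it to \citet{BDOS01} or \citet{RSH12} is exactly what the paper does, so no gap is introduced. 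Your proof of the projection identity $\co^{-1}\bs{Q}_\theta=\bs{W}(\bs{\Sigma}_\theta^{\bs{W}})^{-1}\bs{W}\trans$ --- agreement of two symmetric maps on the complementary spanning sets $\mathrm{col}(\bs{H})$ and $\mathrm{col}(\co\bs{W})$ --- is the same argument as Lemma \ref{Lem:switch_pov}, and the trace manipulations that follow are routine and correct.
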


The fact that \eqref{Eq:Gibbs_ref_prior_theta_berger} is an expression
of the reference prior fitting
the ordering $\vecteur{\beta} \prec (\sigma^2, \theta)$
was first shown in \citet{BDOS01}.
\citet{RSH12} then showed that the reference prior
fitting the ordering $\vecteur{\beta} \prec \sigma^2 \prec \theta$ is the same.
The proof that \eqref{Eq:Gibbs_ref_prior_theta}
is an expression of the same reference prior can be found in Appendix \ref{App:rep_prior}. \medskip

Proposition \ref{Prop:rep_prior} shows that $\pi(\theta)$ is the only untractable
factor in the expression of the reference prior $\pi(\vecteur{\beta}, \sigma^2, \theta)$.
The parameters $\vecteur{\beta}$ and $\sigma^2$ can be marginalized out of 
$L(\vecteur{y} | \vecteur{\beta}, \sigma^2, \theta)$.

\begin{prop} \label{Prop:marginal_likelihood}
If $p \geqslant 1$, after marginalizing $\bs{\beta}$ and $\sigma^2$ out, we have
\begin{align}
\likelihood &:= \iint \allparamlikelihood / \sigma^2 d \bs{\beta} d \sigma^2 \nonumber \\
&= \left( \frac{2 \pi^{\frac{n-p}{2}}}{ \Gamma \left(\frac{n-p}{2}\right)} \right)^{-1} 
\left| \co^{-1} \right|^{\frac{1}{2}} | 
\left| \bs{H} \trans  \co^{-1} \bs{H} \right|^{- \frac{1}{2}} 
\left( \bs{y} \trans \co^{-1} \bs{Q}_\theta \bs{y} \right)^{-\frac{n-p}{2}}.
\end{align}

Alternatively, the integrated likelihood with $p \geqslant 1$ can also be written

\begin{equation} \label{Eq:integrated_likelihood}
\likelihood = \left( \frac{2 \pi^{\frac{n-p}{2}}}{ \Gamma \left(\frac{n-p}{2}\right)} \right)^{-1} 
\left| \bs{H} \trans \bs{H} \right|^{-\frac{1}{2}} 
\left| \bs{W} \trans  \co \bs{W} \right|^{- \frac{1}{2}} 
\left( \bs{y} \trans \bs{W} \left( \bs{W} \trans \co \bs{W} \right)^{-1} \bs{W} \trans \bs{y} \right)^{-\frac{n-p}{2}}.
\end{equation}

If $p=0$, the integrated likelihood is simply

\begin{equation}
\likelihood = \int \nulltrendallparamlikelihood / \sigma^2 d \sigma^2 = \left( \frac{2 \pi^{\frac{n}{2}}}{ \Gamma \left(\frac{n}{2}\right)} \right)^{-1} 
\left| \co^{-1} \right|^{\frac{1}{2}}
\left( \bs{y} \trans \co^{-1} \bs{y} \right)^{-\frac{n}{2}}.
\end{equation}

\end{prop}

The proof of this Proposition can be found in Appendix \ref{App:marginal_likelihood}. \medskip

Proving that the reference posterior is proper amounts to finding appropriate upper bounds
on the tail rates of $\likelihood \pi(\theta)$ as $\theta \to 0$ and as $\theta \to +\infty$.

\section{Scope of the original proof} \label{Sec:smoothness_correlation_kernel}

In \citet{BDOS01}, Lemmas 1 and 2 require that the correlation kernel and design set should be such that
$\bs{\Sigma}_{\theta} = \bs{1} \bs{1} \trans + g_0(\theta) \bs{D} + \bs{R}_0(\theta)$,
where $\bs{1}$ is the vector with $n$ entries all equal to 1,
$g_0(\theta)$ is a real-valued function such that $\lim_{\theta \to +\infty} g_0(\theta) = 0$,
$\bs{D}$ is a fixed nonsingular matrix and $\bs{R}_0$ is a function from $(0,+\infty)$
to the set of $n \times n$ real matrices $\mathcal{M}_n$ such that
$\lim_{\theta \to +\infty} \| \frac{1}{g_0(\theta)} \bs{R}_0(\theta) \| = 0$.
\citet{BDOS01} use this form to derive an asymptotic expansion
of $\co^{-1}$ which involves $\matrice{D}^{-1}$ (see Equations (B.4) and (B.5) of the paper:
they are part of the proof of their Lemma 1, which is then used to prove their Theorem 4
which states that the reference posterior is proper). \medskip

\begin{rmq}
  In fact, Lemma 2 of \citet{BDOS01} has additional hypotheses.
  The original statement specifies, among other things, that
  $\co = \vecteur{1} \vecteur{1} \trans + v(\theta) \matrice{D} 
  + w(\theta) \matrice{D}^\star + \matrice{R}(\theta)$, where $\matrice{D}$ (nonsingular) and 
  $\matrice{D}^\star$ are fixed matrices, $\matrice{R}(\theta)$ is a matrix
  that depends on $\theta$,
  and as $\theta \to +\infty$, 
  $w(\theta)/v(\theta)\to0$ and
  $\| \matrice{R}(\theta) \|/w(\theta) \to 0$.
  Additional assumptions on the derivatives of $v$ and $w$ are also made.
  Informally, the goal of all these assumptions is to make sure
  that when $\theta \to +\infty$,
  for the purposes of their proof,
  $(\vecteur{1} \vecteur{1} \trans + v(\theta) \matrice{D})^{-1}$
  is close enough to $\co^{-1}$ 
  and $v'(\theta) \matrice{D} + w'(\theta) \matrice{D}^\star$
  is close enough to $\frac{d}{d\theta} \co$.
  The framework of the present article does not make the assumption that
  $\matrice{D}$ is nonsingular, which means that an
  asymptotic expansion of $\co$ at higher orders is required
  and that neither $\matrice{D}$ nor $\matrice{D}^\star$ plays any particular role.
\end{rmq}

The following results make it clear that $\matrice{D}$ is singular 
in practically relevant situations.

\begin{prop} \label{Prop:small_order_asymptotic_decomposition}
  For Spherical, Power Exponential, Rational Quadratic and Matérn correlation kernels, the correlation matrix $\co$ can be expressed as
  \begin{equation}
    \co = \vecteur{1} \vecteur{1} \trans + g_0(\theta) \matrice{D}^{(q)} + \matrice{R}_0(\theta),
  \end{equation}
  where:
  \begin{itemize}
    \item $\matrice{D}^{(q)}$ is the matrix with $(i,i')$-th element $ \left\| \bs{x}^{(i)} - \bs{x}^{(i')} \right\|^{2q}$, with $q$ given in Table \ref{Tab:D_non_singularity};
    \item $g_0$ is a real-valued function given in Table \ref{Tab:D_non_singularity};
    \item $\bs{R}_0$ is a function from $(0,+\infty)$ to the set of $n \times n$ real matrices $\mathcal{M}_n$ such that $\lim_{\theta \to +\infty} \| \frac{1}{g_0(\theta)} \bs{R}_0(\theta) \| = 0$.
  \end{itemize}
\end{prop}

The proof of this Proposition can be found in Appendix \ref{App:small_order_asymptotic_decomposition}. \medskip

\begin{table}[!ht]
  \begin{center}
  \begin{tabular}{|c|c|c|}
  \hline 
  \textbf{Kernel} & $g_0(\theta)$ & $q$ \\ 
  \hline 
  Spherical & $ -(3/2) \theta^{-1} $ & $1/2$ \\
  Power Exponential ($\rho \in (0,2]$) & $- \theta^{-\rho}$ & $\rho/2$ \\
  
  Rational Quadratic & $-\nu \theta^{-2}$ & $1$ \\
  Matérn ($\nu<1$) & $ \Gamma(-\nu) \nu^\nu \Gamma(\nu)^{-1} \theta^{-2\nu} $ & $\nu$ \\
  Matérn ($\nu=1$) & $-2 \theta^{-2} \log(\theta)$ & $1$ \\
  Matérn ($\nu>1$) & $- \nu (\nu-1)^{-1} \theta^{-2} $ & $1$ \\\hline
  \end{tabular} 
  \end{center}
  \caption{Details of the second order expansion of 
  $\co$ for various correlation kernels.
  For Spherical, Power Exponential and Rational Quadratic kernels,
  deriving this from Table \ref{Tab:Kernel_families} is straigntforward.
  For Matérn kernels, they follow from lemmas in supplementary material:
  Lemma \ref{Lem:Matern_noninteger_nu_asymptotic_expansion}
  if $\nu$ is not an integer
  and Lemma \ref{Lem:Matern_integer_nu_asymptotic_expansion}
  if it is.}
  \label{Tab:D_non_singularity}
  \end{table}

Theorem 4 from \cite{Sch37} has the following corollary:

\begin{prop} \label{Cor:D_nonsingular}
For $q \in (0,1)$,
the matrix $\matrice{D}^{(q)}$ defined in Proposition \ref{Prop:small_order_asymptotic_decomposition}
is nonsingular.
\end{prop}

The picture is dramatically different when the correlation kernel $K$ is smooth enough to have $q=1$. This happens as soon as $K$ is twice continuously differentiable. 
Theorem 6 from \citet{Gow85} implies the following result:

\begin{prop} \label{Cor:D_singular}
For $q=1$, the matrix $\matrice{D}^{(q)} = \matrice{D}^{(1)}$ defined in Proposition \ref{Prop:small_order_asymptotic_decomposition}
has rank lower or equal to $r+2$.
\end{prop}

If $n \leqslant r+2$, then Proposition \ref{Cor:D_singular} becomes trivial.
For all practical purposes however, $n$ is much greater than $r$ and $\bs{D}^{(1)}$ is singular. \medskip

Thanks to Propositions \ref{Cor:D_nonsingular} and \ref{Cor:D_singular},
Proposition \ref{Prop:small_order_asymptotic_decomposition} yields this result:

\begin{prop} \label{Prop:claim_abstract}
  In the decomposition of $\co$ given in Proposition \ref{Prop:small_order_asymptotic_decomposition}:
  \begin{itemize}
    \item if the correlation kernel is Spherical, Power Exponential with $\rho<2$ or Matérn with $\nu<1$, then $q<1$ and $\matrice{D}^{(q)}$ is nonsingular;
    \item if the correlation kernel is Squared Exponential (i.e. Power Exponential with $\rho=2$), Rational Quadratic or Matérn with $\nu \geqslant 1$, then $q=1$;
    \item if $n > r + 2$ and the correlation kernel is Squared Exponential, Rational Quadratic or Matérn with $\nu \geqslant 1$, then $\matrice{D}^{(q)} = \matrice{D}^{(1)}$ is singular.
  \end{itemize}
\end{prop}

Proposition \ref{Prop:claim_abstract} justifies the claim in the abstract that the Squared Exponential kernel, Matérn kernels with smoothness $\nu \geqslant 1$ and Rational Quadratic kernels require a proof of the reference posterior's propriety. \medskip 

\section{Propriety of the reference posterior distribution} \label{Sec:propriety}

As shown by \citet{BDOS01}, the reference posterior distribution of $\bs{\beta}$ and $\sigma^2$ conditionally on $\theta$ is proper.

In this section, we prove that the joint reference posterior distribution is proper for Matérn kernels with smoothness $\nu \geqslant 1$, Rational Quadratic kernels and the Squared Exponential kernel.

\begin{prop} \label{Prop:ref_prior_behavior}
For Matérn kernels with smoothness $\nu \geqslant 1$, for Rational Quadratic kernels with parameter $\nu>0$ and for the Squared Exponential kernel, 
the ``marginal'' reference prior distribution $\pi(\theta)$ defined by Proposition \ref{Prop:rep_prior} has the following behavior.
\begin{enumerate}
\item  When $\theta \to 0$, 
\begin{equation}
\pi(\theta) =
\left\{
\begin{array}{lr} 
o(1)
& \text{for Matérn and Squared Exponential kernels;}
\\ 
O(\theta^{2\nu-1})
& \text{for Rational Quadratic kernels.}
\end{array}
\right.
\end{equation}
\item  When $\theta \to +\infty$,
\begin{equation}
\pi(\theta) = O(\theta^{-1}).
\end{equation}
\end{enumerate}
\end{prop}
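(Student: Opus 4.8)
The plan is to work directly with the second expression \eqref{Eq:Gibbs_ref_prior_theta} for $\pi(\theta)$. Writing $\bs{M}_\theta = \left( \frac{d}{d\theta} \bs{\Sigma}_\theta^{\bs{W}} \right) \left( \bs{\Sigma}_\theta^{\bs{W}} \right)^{-1}$, the quantity under the square root is $\Tr(\bs{M}_\theta^2) - \frac{1}{n-p}\left[ \Tr \bs{M}_\theta \right]^2$, which is exactly $(n-p)$ times the empirical variance of the eigenvalues of $\bs{M}_\theta$. Two elementary facts organise the whole argument. First, $\Tr \bs{M}_\theta = \frac{d}{d\theta} \log \left| \bs{\Sigma}_\theta^{\bs{W}} \right|$, and, in an orthonormal eigenbasis of $\bs{\Sigma}_\theta^{\bs{W}}$ with eigenvalues $\lambda_1, \dots, \lambda_{n-p}$, one has $\Tr(\bs{M}_\theta^2) = \sum_{i,j} \left( \frac{d}{d\theta} \bs{\Sigma}_\theta^{\bs{W}} \right)_{ij}^2 / (\lambda_i \lambda_j)$. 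Second, the variance is invariant under $\bs{\Sigma}_\theta^{\bs{W}} \mapsto c(\theta) \bs{\Sigma}_\theta^{\bs{W}}$ for any scalar function $c$, because this replaces $\bs{M}_\theta$ by $\bs{M}_\theta + (c'/c)\bs{I}_{n-p}$, shifting every eigenvalue equally. I would use this freedom to renormalise $\bs{\Sigma}_\theta^{\bs{W}}$ in each regime so that its leading term is $O(1)$.

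The regime $\theta \to 0$ is a regular perturbation and should be straightforward. Since $K(d/\theta) \to 0$ for every fixed $d > 0$, we have $\co \to \bs{I}_n$ and hence $\bs{\Sigma}_\theta^{\bs{W}} \to \bs{I}_{n-p}$, a nonsingular limit. Writing $\bs{\Sigma}_\theta^{\bs{W}} = \bs{I}_{n-p} + \bs{E}(\theta)$ with $\bs{E}(\theta) \to 0$, a Neumann expansion gives $\bs{M}_\theta = \frac{d}{d\theta}\bs{E}(\theta) + O\!\left( \|\bs{E}\| \, \|\tfrac{d}{d\theta}\bs{E}\| \right)$, so the variance is controlled by the size of $\frac{d}{d\theta}\bs{E}(\theta)$. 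The rate of $\bs{E}$ is dictated by the tail of the kernel: for the Matérn and Squared Exponential kernels the off-diagonal entries decay (super-)exponentially in $1/\theta$, forcing $\pi(\theta) = o(1)$; for the Rational Quadratic kernel the tail $K(s) \sim s^{-2\nu}$ makes the off-diagonal entries of order $\theta^{2\nu}$, which yields the claimed power-law bound. I would simply insert these expansions and read off the exponents.

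The regime $\theta \to +\infty$ is the heart of the matter and a singular perturbation. Here $\co \to \bs{1}\bs{1}\trans$, and after projecting by $\bs{W}$ and renormalising by $\theta^2$ the leading term is a multiple of $\bs{A} := \bs{W}\trans \bs{D} \bs{W}$, where $\bs{D}$ is the matrix of squared distances. By Corollary \ref{Cor:D_singular} the matrix $\bs{D}$ — and therefore $\bs{A}$ — is singular, so the limit of the renormalised $\bs{\Sigma}_\theta^{\bs{W}}$ stays degenerate and its inverse cannot be expanded naively. The plan is to split $\R^{n-p}$ into the range of $\bs{A}$, where the eigenvalues of $\bs{\Sigma}_\theta^{\bs{W}}$ are of order $\theta^{-2}$, and its kernel, where they are smaller and are resolved only by the next terms in the expansion $\co = \bs{1}\bs{1}\trans + \sum_{k\geq 1} a_{2k}\theta^{-2k}\bs{D}_{2k} + (\text{non-analytic remainder})$, with $\bs{D}_{2k}$ the matrix of entries $\| \bs{x}^{(i)} - \bs{x}^{(j)} \|^{2k}$. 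On each block the eigenvalues of $\bs{M}_\theta$ are asymptotically $-2k/\theta$, where $2k$ is the order in $\theta^{-1}$ at which that direction is first activated; summing the two-point (or multi-point) variance over the blocks produces a constant times $\theta^{-1}$, which is the Matérn bound, and a careful accounting of the coupling terms $\left( \frac{d}{d\theta}\bs{\Sigma}_\theta^{\bs{W}} \right)_{ij}^2 / (\lambda_i \lambda_j)$ between directions of different order is what must be shown not to spoil it.

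The main obstacle is exactly this kernel-filling at $\theta \to +\infty$: one must know at which order, and how nondegenerately, the kernel of $\bs{A}$ is populated. For the Matérn kernels the expansion of $K$ near $0$ carries a non-analytic term ($s^{2\nu}$, or $s^2 \log s$ when $\nu = 1$) whose associated distance matrix is nonsingular by Schoenberg-type results, so the kernel of $\bs{A}$ is filled cleanly at a single further scale and the sharp $O(\theta^{-1})$ follows. The Rational Quadratic and Squared Exponential kernels are analytic in $\theta^{-2}$, so their kernel-filling proceeds only through the integer-power matrices $\bs{D}_{2k}$, each of rank bounded in terms of $r$; tracking this ``staircase'' of scales and the associated eigenvector rotations is delicate, and it is here that I expect the analysis to yield only the weaker bounds $o(1)$ and $O(\theta)$ recorded in the statement. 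Throughout I would treat the cases $\bs{1} \in \mathrm{col}(\bs{H})$ (so $\bs{W}\trans \bs{1} = 0$) and $\bs{1} \notin \mathrm{col}(\bs{H})$ — in particular $p = 0$ — separately, since in the second the constant direction contributes an eigenvalue of $\bs{\Sigma}_\theta^{\bs{W}}$ that stays bounded away from $0$.
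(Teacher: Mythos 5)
Your treatment of $\theta \to 0$ is sound and is essentially the paper's own argument: $\bs{\Sigma}_\theta^{\bs{W}} \to \bs{I}_{n-p}$ makes this a regular perturbation, and the rate is read off from $\frac{d}{d\theta}K(1/\theta)$. The gap is in the regime $\theta \to +\infty$, which you correctly identify as the heart of the matter but do not close. Your plan is to follow the ``staircase'' of scales $\theta^{-2k}$ at which the kernel of $\bs{W}\trans\bs{D}\bs{W}$ is successively filled, read off blockwise eigenvalues $-2k/\theta$ for $\bs{M}_\theta$, and then verify that the coupling terms $\bigl(\frac{d}{d\theta}\bs{\Sigma}_\theta^{\bs{W}}\bigr)_{ij}^2/(\lambda_i\lambda_j)$ between directions resolved at different orders do not spoil the estimate. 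Those coupling terms, together with the rotation of the eigenprojections along the staircase, are precisely where all the difficulty lives, and you offer no mechanism for controlling them --- you explicitly defer this as the ``delicate'' part. Note also that your diagonal heuristic would predict $O(\theta^{-1})$ uniformly, whereas the statement only claims $O(\theta)$ for the Squared Exponential kernel and the paper's subsequent propriety proof (Table \ref{Tab:Asymptotic_upper_bounds}) is organised around the possibility that $\pi(\theta)$ does grow for that kernel; at minimum this signals that the couplings cannot be dismissed. Your description of the Matérn case is also too optimistic: for $\nu \geqslant 1$ the integer powers $\theta^{-2},\dots,\theta^{-2\lfloor\nu\rfloor}$ precede the non-analytic term, each associated with a singular matrix $\bs{D}^{(k)}$, so the kernel is not filled ``at a single further scale''.

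The paper closes this by a different device that your proposal does not identify. The invariance you note --- replacing $\frac{d}{d\theta}\co$ by $\frac{d}{d\theta}\co - t_1\co$ leaves the variance functional unchanged (Lemma \ref{Lem:sd_eigenvalues}) --- is used not to renormalise, but to choose $t_1 = r\theta^{-1}$ so that $\bs{F}_\theta := r\theta^{-1}\co - \frac{d}{d\theta}\co$ is \emph{positive semi-definite}. This positivity follows from the Fourier representation $\bs{\xi}\trans\co\bs{\xi} = M_r\theta^r I_\theta(\bs{\xi})$ with $I_\theta(\bs{\xi})$ decreasing in $\theta$ (Lemmas \ref{Lem:majoration_derivee_Matern}--\ref{Lem:majoration_derivee_SE}), and it is the structural fact your proposal misses. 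Once $\bs{F}_\theta \succeq 0$, one has $\Tr\bigl[(\bs{F}_\theta\co^{-1}\bs{Q}_\theta)^2\bigr] \leqslant \Tr\bigl[\bs{F}_\theta\co^{-1}\bs{Q}_\theta\bigr]^2$, and Lemma \ref{Lem:ref_prior_majoration} bounds the entire functional by $(n-p)\,t_2$, where $t_2$ is any bound on the Rayleigh quotients $\bs{\xi}\trans\bs{F}_\theta\bs{\xi}/\bs{\xi}\trans\co\bs{\xi}$; the spectral representation yields $t_2 = O(\theta^{-1})$ for Matérn, $O(1)$ for Rational Quadratic and $O(\theta)$ for Squared Exponential, with no eigenvalue or eigenvector tracking whatsoever. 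To salvage your route you would need a quantitative perturbation theory for the eigenprojections of $\bs{\Sigma}_\theta^{\bs{W}}$ across all scales of the expansion, which is substantially harder than the positivity argument and is not supplied.
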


\begin{rmq} \label{Rmq:truncated_prior}
  The second assertion of Proposition \ref{Prop:ref_prior_behavior} is not strong enough to make the reference prior proper
  because $\int_1^{+\infty} \theta^{-1} d \theta = +\infty$.
  The first assertion, however, implies that if we truncated the reference prior at some value $T>0$, only taking $(0,T)$ as its support, the resulting prior would be proper. The choice of $T$ would be very informative, though.
\end{rmq}

The proof of Proposition \ref{Prop:ref_prior_behavior} can be found in Appendix \ref{App:ref_prior_behavior}. \medskip

Figure \ref{Fig:log-prior} shows the tail rate of the reference prior 
for Matérn kernels with smoothness $\nu=1.5$ and $2.5$ 
for the following Kriging models:

\begin{itemize}
  \item Simple Kriging: the mean function is assumed to be null;
  \item Ordinary Kriging: the mean function is assumed to be an unknown constant: $\matrice{H}$ is $\vecteur{1}$, the vector of $\R^n$ filled with ones;
  \item Affine Kriging: the mean function is assumed to be an unknown affine function: $\matrice{H}$ is the $n \times (r+1)$ matrix whose first column is $\vecteur{1}$ and whose last $r$ columns contain the coordinates of the points in the design set.
\end{itemize}

\begin{figure}[ht]
  \begin{center}
    \subfloat[Line: $-\log(\theta)+2.2$]{
      \includegraphics[angle=0,width=0.3\textwidth, height=0.25\linewidth]{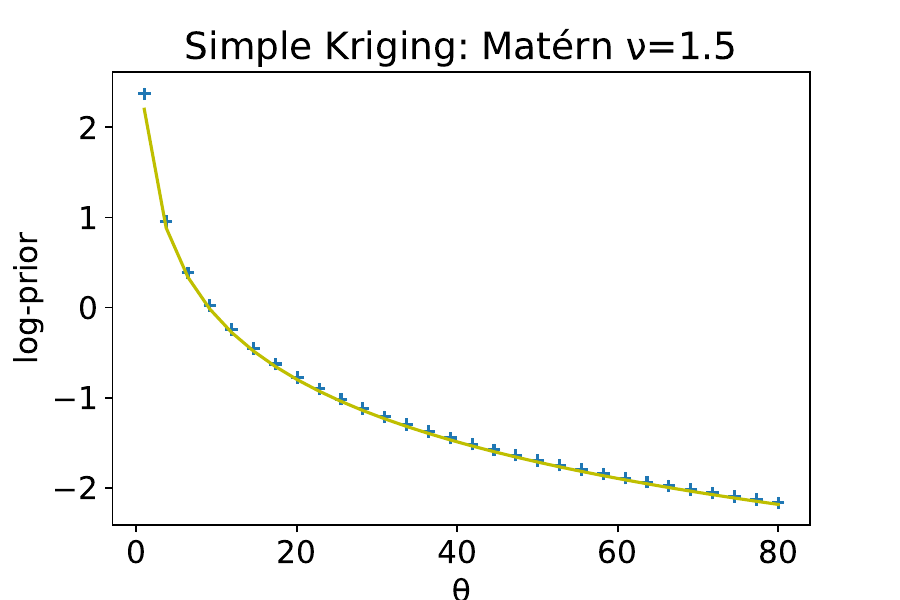}
      
                         }
      \subfloat[Line: $-\log(\theta)+1.1$]{
      \includegraphics[angle=0,width=0.3\textwidth, height=0.25\linewidth]{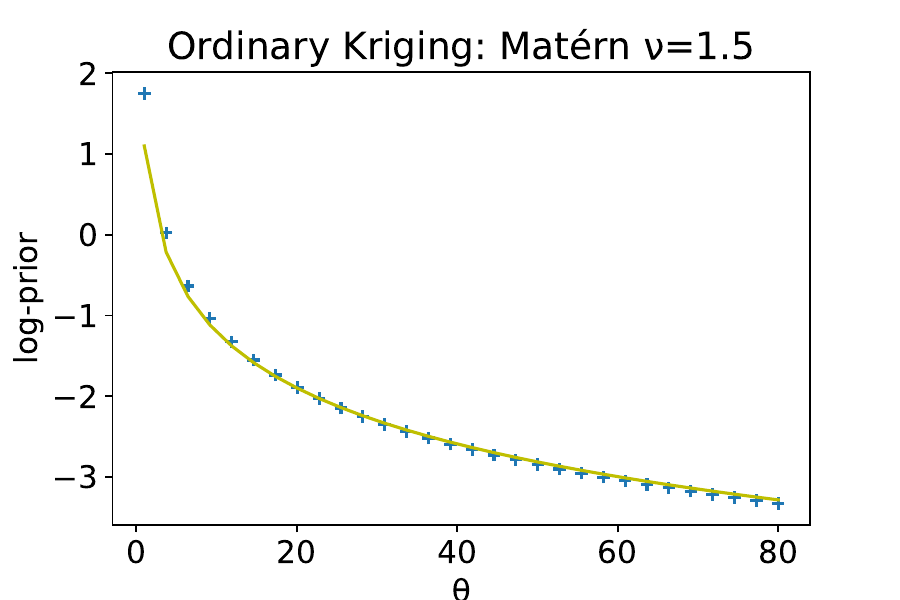}
      \label{Sub:log-prior-b}
                         }
      \subfloat[Line: $-2\log(\theta)+1.1$]{
      \includegraphics[angle=0,width=0.3\textwidth, height=0.25\linewidth]{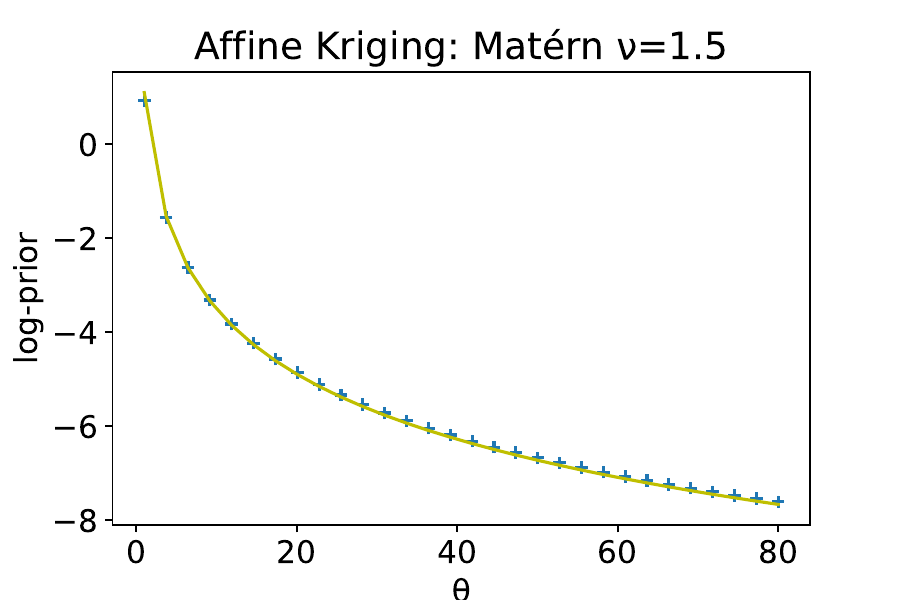}
      \label{Sub:log-prior}
                         }
    \\
    \subfloat[Line: $-\log(\theta)+2.8$]{
      \includegraphics[angle=0,width=0.3\textwidth, height=0.25\linewidth]{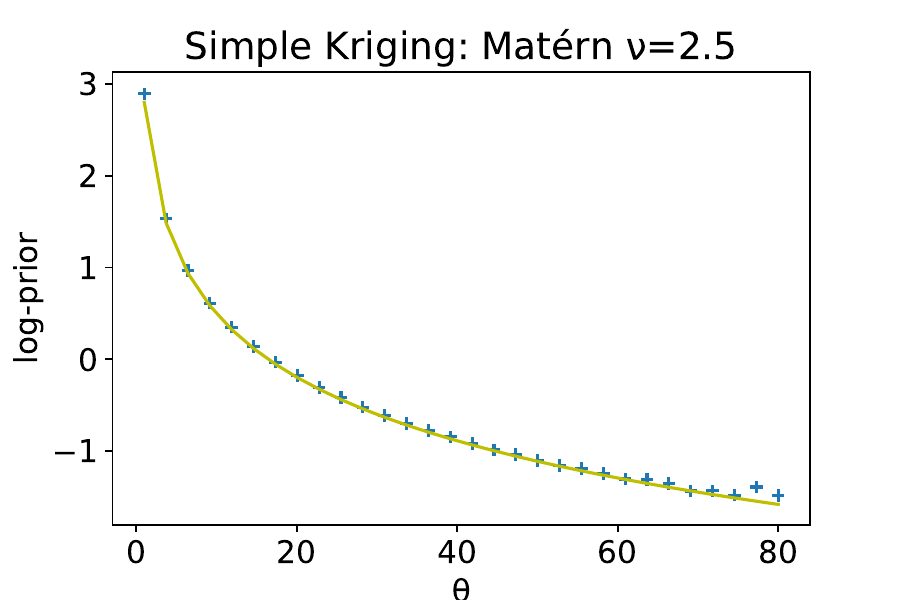}
      
                         }
    \subfloat[Line: $-\log(\theta)+2.2$]{
      \includegraphics[angle=0,width=0.3\textwidth, height=0.25\linewidth]{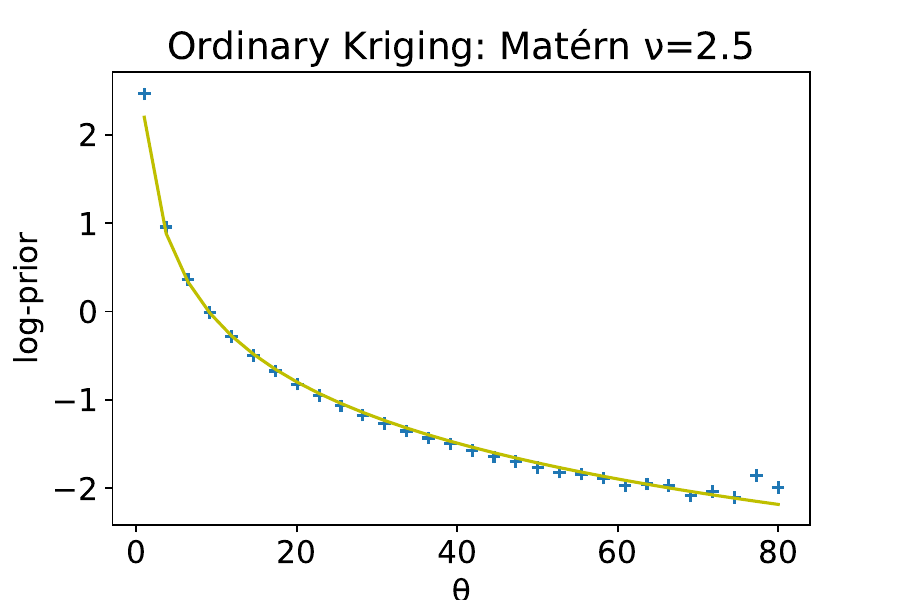}
      \label{Sub:log-prior-e}
                         }   
    \subfloat[Line: $-\log(\theta)+1.0$]{
      \includegraphics[angle=0,width=0.3\textwidth, height=0.25\linewidth]{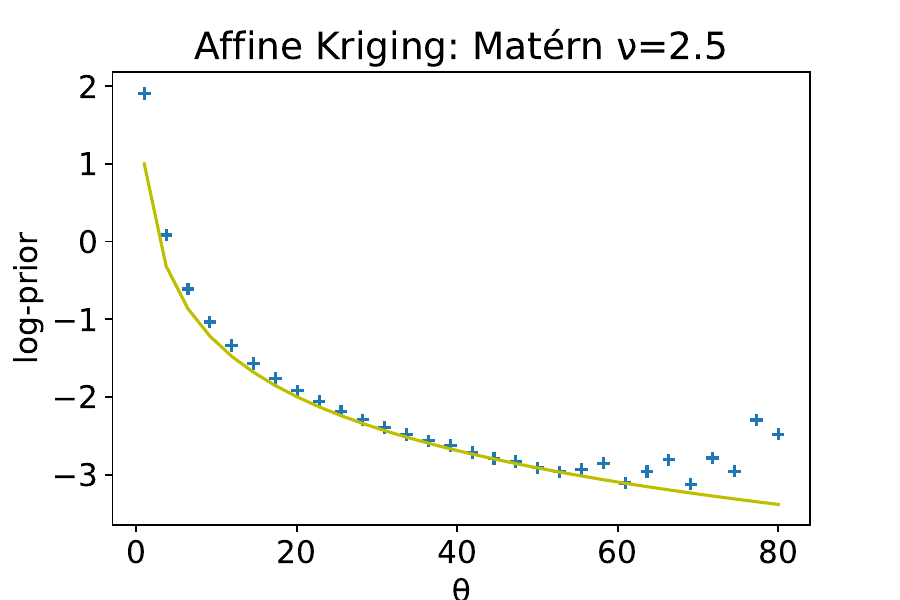}
      \label{Sub:log-prior-f}
                         }     
    \caption{Logarithm of the reference prior density for large values of $\theta$ for varying Kriging models and correlation kernels. 
    In each case, the design space is one-dimensional ($r=1$) and the design set is the 10-point regular grid on $[0,1]$.}
    \label{Fig:log-prior}
  \end{center}
\end{figure}

Figure \ref{Fig:log-prior} is consistent with the bound for tail rates 
given in Proposition \ref{Prop:ref_prior_behavior}. \medskip

For rough kernels (Spherical, Power Exponential with $\rho<2$, 
Matérn with $\nu<1$), \citet{BDOS01} prove that 
if $\vecteur{1}$ belongs to the vector space spanned 
by the columns of $\matrice{H}$ (as happens in Ordinary and 
Affine Kriging), 
then the reference prior is proper.

\medskip

The upper bound provided by Proposition \ref{Prop:ref_prior_behavior}, 
$O(\theta^{-1})$, can be tightened in some cases.
For example,
in the case of Affine Kriging with a Matérn kernel with smoothness $\nu=1.5$ (figure \ref{Sub:log-prior}), the prior is bounded by $O(\theta^{-2})$. See Proposition \ref{Prop:tail_rates_affine_kriging} in  Appendix \ref{App:tail_rates} for a proof. \medskip

The following lemma deals with the asymptotic behavior of the integrated likelihood
when $\theta \to +\infty$. \medskip

Let $v_{1}(\theta) \geqslant ... \geqslant v_{n-p}(\theta) > 0$ be the 
ordered eigenvalues of $\bs{W} \trans \co \bs{W}$.

\begin{lem} \label{Lem:upper_bound_yfactor_in_likelihood}
For Rational Quadratic and Squared Exponential kernels and for Matérn kernels with smoothness $\nu \geqslant 1$, there exists a hyperplane $\mathcal{H}_n$ of $\R^n$ such that for every $\bs{y} \in \R^n \setminus \mathcal{H}_n$, when $\theta \to +\infty$:

\begin{equation} \label{Eq:majoration_norm_inverse_correlation}
\left( \bs{y} \trans \bs{W} \left( \bs{W} \trans \co \bs{W} \right)^{-1} \bs{W} \trans \bs{y} \right)^{-1} = O(v_{n-p}(\theta)). 
\end{equation}

In particular, in the case of Simple Kriging, 
$\matrice{W}$ is the identity matrix and $p=0$:

\begin{equation}
  \left( \bs{y} \trans \co^{-1} \bs{y} \right)^{-1} = O(v_{n}(\theta)). 
\end{equation}

\end{lem}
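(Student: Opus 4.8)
The plan is to work with the reduced matrix $M(\theta) = \bs{W}\trans\co\bs{W} = \bs{\Sigma}_\theta^{\bs{W}}$ and the reduced vector $\bs{z} = \bs{W}\trans\bs{y}\in\R^{n-p}$, so that the quantity to control is the quadratic form $\bs{z}\trans M(\theta)^{-1}\bs{z}$ and \eqref{Eq:majoration_norm_inverse_correlation} becomes the lower bound $\bs{z}\trans M(\theta)^{-1}\bs{z}\geq c\,v_{n-p}(\theta)^{-1}$ for some $c>0$ and all large $\theta$. Writing the spectral decomposition $M(\theta) = \sum_{k=1}^{n-p}v_k(\theta)\,\bs{u}_k(\theta)\bs{u}_k(\theta)\trans$ with orthonormal eigenvectors, we have $\bs{z}\trans M^{-1}\bs{z} = \sum_k v_k(\theta)^{-1}(\bs{z}\trans\bs{u}_k(\theta))^2$, so the whole difficulty is to show that $\bs{z}$ keeps a non-vanishing projection onto the eigenvectors carrying the smallest eigenvalues.

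Rather than isolate the single bottom eigenvector $\bs{u}_{n-p}$ --- whose direction need not converge when the smallest eigenvalue is asymptotically degenerate --- I would work with the whole bottom eigenspace. Let $E_\theta$ be the span of the eigenvectors whose eigenvalues decay at the same rate as $v_{n-p}(\theta)$, let $P_\theta$ be the orthogonal projection onto $E_\theta$, and set $w(\theta) = \max\{v_k(\theta):\bs{u}_k(\theta)\in E_\theta\}$. Keeping only these terms gives the robust bound $\bs{z}\trans M^{-1}\bs{z}\geq w(\theta)^{-1}\,\|P_\theta\bs{z}\|^2$. It therefore suffices to establish two facts: that $w(\theta) = O(v_{n-p}(\theta))$, i.e. all bottom-level eigenvalues are comparable, and that $\|P_\theta\bs{z}\|^2$ stays bounded away from $0$ for $\bs{y}$ outside a single hyperplane.

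Both facts would follow from an analysis of the spectrum of $M(\theta)$ as $\theta\to+\infty$. Here I would use that $K_\theta(d)=K(d/\theta)\to K(0)=1$, so $\co\to\bs{1}\bs{1}\trans$, together with the expansion $\co = \bs{1}\bs{1}\trans + g_0(\theta)\bs{D}+\bs{R}_0(\theta)$ of Section \ref{Sec:smoothness_correlation_kernel} and its higher-order refinements coming from the behavior of $K$ near the origin. Because $\bs{D}$ and the successive distance-power matrices have bounded rank (Corollary \ref{Cor:D_singular}), the eigenvalues of $M(\theta)$ organize into finitely many ``levels'' separated by the successive rates $g_0(\theta)\gg g_1(\theta)\gg\cdots$; the smallest eigenvalues form the deepest level, which is spectrally isolated from the rest. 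This isolation is the crux: it forces all eigenvalues in the level to be comparable, giving $w(\theta)=O(v_{n-p}(\theta))$, and, by standard perturbation theory for a separated group of eigenvalues, it forces $E_\theta$ to converge to a fixed limiting subspace $E^*\subsetneq\R^{n-p}$, so that $\|P_\theta\bs{z}\|\to\|P^*\bs{z}\|$ with $P^*$ the projection onto $E^*$.

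It then remains to take for $\mathcal{H}$ any hyperplane of $\R^n$ containing the proper subspace $\{\bs{y}\in\R^n : P^*\bs{W}\trans\bs{y} = 0\}$, which is proper because $E^*\neq\{0\}$ makes $(E^*)^\perp$ a proper subspace of $\R^{n-p}$ while $\bs{W}\trans$ is surjective. For $\bs{y}\notin\mathcal{H}$ we have $\|P^*\bs{z}\|>0$, hence $\liminf_\theta\|P_\theta\bs{z}\|^2>0$, and combining with $w(\theta)=O(v_{n-p}(\theta))$ yields $\bs{z}\trans M^{-1}\bs{z}\geq c\,v_{n-p}(\theta)^{-1}$, which is \eqref{Eq:majoration_norm_inverse_correlation}. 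I expect the main obstacle to be the spectral step of the third paragraph: making the ``level'' picture rigorous kernel by kernel, in particular pinning down the decay rate of the deepest level and the attendant spectral gap. This is exactly where the kernel-specific smoothness enters, through the estimates of Lemmas \ref{Lem:majoration_derivee_Matern}, \ref{Lem:majoration_derivee_RQ} and \ref{Lem:majoration_derivee_SE}; the Matérn case with finite $\nu\geq 1$ should be the most delicate, since there the expansion has only finitely many smooth terms before the rough remainder governs the deepest levels.
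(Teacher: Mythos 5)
Your proposal is correct in its overall architecture and arrives at exactly the hyperplane the paper uses (any hyperplane of $\R^n$ containing the preimage under $\bs{W}\trans$ of the orthogonal complement of the ``deepest'' fixed subspace $E^* = \cap_{k=0}^{N-1}\Ker(\bs{W}\trans\bs{D}^{(k)}\bs{W})$, with $N$ the first index at which the intersection of kernels becomes trivial --- the existence of such a finite $N$ being the paper's Lemma \ref{Lem:first_trivial_intersection}). The difference is in how the two key estimates are obtained. You propose a spectral route: isolate the bottom group of eigenvalues, show its total eigenprojection $P_\theta$ converges to $P^*$ by perturbation theory for a separated group, and bound $\bs{z}\trans M(\theta)^{-1}\bs{z}\geqslant w(\theta)^{-1}\|P_\theta\bs{z}\|^2$. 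The paper instead never tracks eigenvectors: it runs a recursive Schur-complement decomposition with respect to $E^*$ (Lemmas \ref{Lem:majoration_inverse_Maclaurin} and \ref{Lem:minoration_yinversey_Maclaurin}), which directly gives $\|M(\theta)^{-1}\| = O(|a_N(\theta)|^{-1})$ together with $\bs{v}\trans M(\theta)^{-1}\bs{v}\geqslant (\bs{v}'')\trans(a_N\bs{A}_N''+\bs{B}'')^{-1}\bs{v}''\gtrsim |a_N(\theta)|^{-1}$ whenever $P^*\bs{v}\neq 0$, and hence the claim since $\|M(\theta)^{-1}\|\geqslant v_{n-p}(\theta)^{-1}$. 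The step you flag as the main obstacle is indeed where all the work lies, and two remarks are in order. First, both the spectral gap and the comparability $w(\theta)\asymp v_{n-p}(\theta)$ require that the deepest diagonal block $P^*\bs{A}_NP^*|_{E^*}$ be nonsingular; this is not automatic from your ``level'' heuristic and must be extracted (as the paper does) from the positive definiteness of $\bs{W}\trans\co\bs{W}$ together with the triviality of $\cap_{k=0}^{N}\Ker\bs{A}_k$ --- without it the bottom eigenvalue could decay strictly faster than $|a_N(\theta)|$ and the bound would fail. The level separation itself also uses that the off-diagonal blocks $\bs{A}_k'''$ vanish for $k<N$ (a consequence of symmetry and $E^*\subset\Ker\bs{A}_k$), otherwise intermediate eigenvalue rates could appear. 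Second, the kernel-specific input for this lemma is not the derivative bounds of Lemmas \ref{Lem:majoration_derivee_Matern}--\ref{Lem:majoration_derivee_SE} (those serve the reference-prior bound), but the series expansions \eqref{Eq:AD_infinitely_differentiable}, \eqref{Eq:AD_Matern} and \eqref{Eq:AD_Matern_integer_nu} of $\co$ itself, supplemented by Lemma \ref{Lem:Matern_decay} in the Mat\'ern case; note also that for integer Mat\'ern smoothness consecutive scales differ only by logarithmic factors, so your gap argument must be phrased for general scale functions with $a_{k+1}=o(a_k)$ rather than pure powers of $\theta$. With these points attended to, your route closes and is a legitimate alternative to the paper's Schur-complement recursion.
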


The proof of this lemma can be found in Appendix \ref{App:upper_bound_yfactor_in_likelihood}. \medskip

For $\vecteur{y} \notin \mathcal{H}_n$,
Lemma \ref{Lem:upper_bound_yfactor_in_likelihood} shows that
the smallest eigenvalue $v_{n-p}(\theta)$ of $\bs{W} \trans \co \bs{W}$
is an asymptotic upper bound for
one of the factors of the squared integrated likelihood $\likelihood^2$ when $\theta \to +\infty$.
\medskip

We need to take the other factors into account
in order to obtain an asymptotic upper bound
of the squared integrated likelihood.
Combined with Proposition \ref{Prop:marginal_likelihood},
Lemma \ref{Lem:upper_bound_yfactor_in_likelihood} implies 
the following result.

\begin{prop} \label{Prop:likelihood_as_product}
If the observation vector $\bs{y}$ belongs to $\R^n \setminus \mathcal{H}_n$
($\mathcal{H}_n$ is defined in Lemma \ref{Lem:upper_bound_yfactor_in_likelihood}), then

\begin{equation} \label{Eq:likelihood_domination}
\likelihood^2 =  \prod_{i=1}^{n-p} \frac{O(v_{n-p}(\theta))}{v_{i}(\theta)} 
\quad \mathit{when} \; \theta \to +\infty.
\end{equation}

\end{prop}

Notice that all factors in the product \eqref{Eq:likelihood_domination} are 
$O(1)$ when $\theta \to +\infty$.
For $\vecteur{y} \notin \mathcal{H}_n$,
Proposition \ref{Prop:likelihood_as_product} thus implies
the existence of a constant asymptotic upper bound
for the likelihood $\likelihood$.
It does not provide any tighter explicit upper bound,
and this bound can only guarantee posterior propriety 
if the prior is itself proper (cf. Remark \ref{Rmq:truncated_prior}).
However, it provides an implicit upper bound
that will be used in proofs of posterior propriety. \medskip

For $\vecteur{y} \notin \mathcal{H}_n$,
when $\theta \to +\infty$, any upper bound of the ratio 
$\frac{v_{n-p}(\theta)}{v_{1}(\theta)}$ is an upper bound 
of the squared likelihood $\likelihood^2$. \medskip

Figure \ref{Fig:likelihood} in Appendix \ref{App:tail_rates}
gives an idea of the explicit tail rates of the marginal log-likelihood in several situations.

\begin{thm} \label{THM:REFERENCE_POSTERIOR_PROPER}
For Rational Quadratic kernels and Squared Exponential kernels and for any Matérn kernel with smoothness $\nu \geqslant 1$, there exists a hyperplane $\mathcal{H}_n$ of $\R^n$ such that if $\bs{y}$ belongs to $\R^n \setminus \mathcal{H}_n$, then the reference posterior distribution $\pi(\theta|\bs{y})$ is proper.
\end{thm}

\begin{rmq}
  The requirement that  $\bs{y}$ should belong to $\R^n \setminus \mathcal{H}_n$ is not substantially restrictive. If $\bs{y}$ is actually sampled from a nondegenerate Gaussian process, it almost surely does not belong to the hyperplane $\mathcal{H}_n$. Therefore, if $\bs{y}$ actually does belong to $\mathcal{H}_n$, it might be better explained by a degenerate Gaussian model. The most compelling example is that of a constant observation vector, for which the Kriging model would be grossly inappropriate.
  In case there is some doubt about whether a given
  $\vecteur{y}$ belongs to $\R^n \setminus \mathcal{H}_n$,
  Appendix \ref{App:Precise_formulation} provides explicit 
  sufficient conditions instead.
\end{rmq}

The proof of the Theorem can be found in Appendix \ref{App:REFERENCE_POSTERIOR_PROPER}.
\medskip

Depending on the matrix $\matrice{H}$ (or its absence in the case of Simple Kriging),
the bounds of the likelihood function may differ. 
In some cases, it is even possible to derive a tighter bound 
for the prior than the general one given in 
Proposition \ref{Prop:ref_prior_behavior}. 
Examples are given in Appendix \ref{App:tail_rates}: 
Affine Kriging with a Matérn kernel with smoothness $\nu \in [1,2)$ 
(Propositions \ref{Prop:tail_rates_affine_kriging} and 
\ref{Prop:tail_rates_affine_kriging_Matern_1}) 
and Ordinary Kriging with a Matérn kernel with smoothness $\nu=1$ 
(Proposition \ref{Prop:tail_rates_ordinary_kriging_Matern_1}).

\section{Conclusion}

In this work, we proved that for a large class of smooth kernels, 
the reference prior leads to a proper posterior distribution.
This class contains the Squared Exponential correlation kernel 
as well as the important Matérn family \citep{Ste99} 
with smoothness parameter $\nu \geqslant 1$. 
The seldom used Rational Quadratic kernels
are also included within this class. \medskip

\cite{BDOS01} proved this result for a class of rough correlation kernels. This class includes the complementary set of the Matérn family -- kernels with smoothness parameter $\nu < 1$ -- as well as all other Power Exponential kernels. Spherical kernels, which are mostly used in the field of geostatistics also belong to this class. \medskip

The results from \cite{BDOS01},
together with Theorem \ref{THM:REFERENCE_POSTERIOR_PROPER},
show how polyvalent the reference prior is, 
insofar as it is able to adapt to very different correlation kernels 
and always leads to a proper posterior.
The key to this flexibility is the way the tail rate of the reference prior
adapts to the tail rate of the integrated likelihood,
which depends on the correlation matrix $\co$
and the trend matrix $\matrice{H}$.
For rough correlation kernels, \citet{BDOS01}
were able to express likelihood tail rates as an explicit function of $\theta$.
More research is needed to do the same for smoother correlation kernels,
even though Appendix \ref{App:tail_rates} provides explicit tail rates in a few specific cases. \medskip

The reference prior's flexilibility means 
no \emph{ad-hoc} technique is required to derive useable inference. 
This makes the approach appealing from a Bayesian point of view 
when no explicit prior information is available. 
Even when explicit prior information is available, following \citet{DM07}, 
the reference prior can be used to derive maximum a posteriori (MAP) estimates or 
High Probability Density (HPD) sets that are invariant under reparametrization.
\medskip

In Section \ref{Sec:smoothness_correlation_kernel}, we recalled that
the original proof from \citet{BDOS01}
relies on the matrix $\matrice{D}^{(q)}$ from the Taylor expansion of the 
correlation matrix being nonsingular
(cf. Proposition \ref{Prop:small_order_asymptotic_decomposition}). 
Provided there are enough observation points, this implies that $q<1$. 
\citet{Ste99} shows that the correlation kernel cannot 
then be twice continously differentiable
at 0 (p. 28, section ``Principal irregular term''). \medskip

Since they make the same assumption, this restriction to rough correlation 
kernels also applies to all 
works generalizing the original result from \citet{BDOS01}.
\citet{GWB18} provide the largest generalization to date:
their setting admits anisotropic kernels defined as products
of one-dimensional kernels and a possible additional noise term (nugget effect).
It does not admit anisotropic geometric kernels, however (see Table \ref{Tab:anisotropic_kernels}
for a definition).
They prove that one of the reference priors leads to a proper posterior:
the prior derived from the reference prior algorithm where $\vecteur{\beta}$
is the lower-ranking parameter and $\sigma^2$,
$\vecteur{\theta}=(\theta_1,...,\theta_r)$ and possibly the parameter controlling the nugget effect
are collectively the higher-ranking group of parameters.
Like \citet{BDOS01}, they assume that the matrix $\matrice{D}^{(q)}$ from
Proposition \ref{Prop:small_order_asymptotic_decomposition}
is nonsingular regardless of $q$, 
so their proof applies to products of rough correlation kernels (Spherical,
Power Exponential with $\rho<2$ and Matérn with $\nu < 1$).
Unfortunately, the proof used in the present article to deal with
smoother kernels (Rational Quadratic, Squared Exponential and Matérn with $\nu \geqslant1$)
cannot easily be adjusted
to their setting. The corresponding reference prior is indeed much more complex as
it is proportional to the square root of the Fisher
information matrix of either $r+1$ parameters ($\sigma^2,\theta_1,...,\theta_r$) or
$r+2$ parameters if there is a nugget effect.

\begin{table}[!ht]
  \begin{center}
  \begin{tabular}{|c|c|c|}
  \hline 
  \textbf{Correlation lengths} & Product &  Anisotropic geometric\\ 
  \hline 
  $\vecteur{\theta} = (\theta_1,...,\theta_r)$
  &
  $\prod_{i=1}^r K_{\theta_i} (t_i)$
  &
  $K_1 \left(\left(\sum_{i=1}^r \frac{t_i^2}{\theta_i^2}\right)^{1/2} \right)$ \\
  \hline
  \end{tabular} 
  \end{center}
  \caption{Anisotropic correlation kernels.
  $K_\theta$ is a 1-dimensional kernel with correlation length $\theta$.}
  \label{Tab:anisotropic_kernels}
\end{table}

Whenever the reference posterior is known to be proper -- whether in the case
of isotropic correlation kernels as shown in the present article
or in the case of a product of rough correlation kernels as shown in \citet{GWB18} --
it is theoretically possible to propagate parameter uncertainty to the predictions
of the Gaussian process model.
This can be done through Markov Chain Monte-Carlo (MCMC) sampling
of the marginal reference posterior distribution on $\vecteur{\theta}$. 
The spread of the predictions obtained using the different values of $\vecteur{\theta}$
account for parameter uncertainty: not only the uncertainty on $\vecteur{\theta}$,
but also on $\vecteur{\beta}$ and $\sigma^2$ since the latter can be marginalized
out of the model (cf. Proposition \ref{Prop:marginal_likelihood}).
In practice though, \citet{GWB18} do not advocate this method
because of the computational cost of MCMC in this setting.
They use the maximum a posteriori (MAP) estimate for $\vecteur{\theta}$ instead.
This effectively means that the density of the reference posterior distribution
acts as a penalization factor on the likelihood.
An alternative proposal is to sample the $\theta_i$ ($1 \leqslant i  \leqslant r$)
from one-dimensional reference 
posterior distributions in order to make MCMC tractable \citep{Mur19}.

\section*{Acknowledgements}
The author would like to thank his PhD advisor Professor Josselin Garnier (École Polytechnique, Centre de Mathématiques Appliquées) for his guidance, Loic Le Gratiet (EDF R\&D, Chatou) and Anne Dutfoy (EDF R\&D, Saclay) for their advice and helpful suggestions.
He also thanks the editor, the associate editor and the referees for
their comments which substantially improved this article.
The author acknowledges the support of the French Agence Nationale de la Recherche (ANR), under grant ANR-13-MONU-0005 (project CHORUS).

\pagebreak

\appendix

\section{Proofs}

The proofs presented in this Appendix rely on auxiliary facts from Appendix \ref{App:auxiliary_facts}. Appendix \ref{App:auxiliary_facts} can be found in supplementary material.

\subsection{Proof of Proposition \ref{Prop:rep_prior}} \label{App:rep_prior}

\begin{proof}
  Both assertions follow from results proved by \citet{RSH12}. 
  The first follows from their Proposition 5 (a) (taken in the particular case with no nugget effect). The second is a consequence of Lemma \ref{Lem:switch_pov}, which restates their Lemma 6: it implies that

  \begin{equation} \label{Eq:conversion_HW}
    \matrice{W} \left( \matrice{W} \trans \co \matrice{W} \right)^{-1} \matrice{W} \trans
    =
    \co^{-1} \left( \matrice{I}_n - \matrice{H} \left( \matrice{H} \trans \co^{-1} \matrice{H} \right)^{-1} \matrice{H} \trans \co^{-1} \right).
  \end{equation}
\end{proof}

\subsection{Proof of Proposition \ref{Prop:marginal_likelihood}} \label{App:marginal_likelihood}

\begin{proof}
  The result for $p=0$ and the first result for $p \geqslant 1$ are from \citet{BDOS01}.
  \medskip
  
  From  \eqref{Eq:conversion_HW}, all that remains to be proved is the determinant equality $\left| \co \right| = \left| \bs{W} \trans  \co \bs{W} \right| \left| \bs{H} \trans  \bs{H} \right|
  \left| \bs{H} \trans  \co^{-1} \bs{H} \right|^{-1}  $. Choose an $n \times p$ matrix $\bs{P}$ with columns forming an orthonormal basis of the $p$-dimensional subspace of $\R^n$ spanned by the columns of $\bs{H}$. Let $(\bs{W} \bs{P})$ denote the matrix whose left $n \times (n-p)$ block is $\bs{W}$ and whose right $n \times p$ block is $\bs{P}$. $(\bs{W} \bs{P})$ is an $n \times n$ orthogonal matrix, so we have $\left| \co \right| = \left| (\bs{W} \bs{P}) \trans \co (\bs{W} \bs{P}) \right|$. Using Schur's complement (see for example \citet{Ser02} p. 139),
  
  \begin{equation} \label{Eq:produit_determinants}
  \left| \co \right| = \left| \bs{W} \trans  \co \bs{W} \right|
  \left| \bs{P} \trans  \co \left( \bs{I}_n - \bs{W} \left( \bs{W} \trans \co \bs{W} \right)^{-1} \bs{W} \trans \co \right) \bs{P} \right|.
  \end{equation}
  
  Equation \eqref{Eq:conversion_HW} is equivalent to:
  
  \begin{equation}
    \co \left( \matrice{I}_n - \matrice{W} \left( \matrice{W} \trans \co \matrice{W} \right)^{-1} \matrice{W} \trans \co \right)
    =
    \matrice{H} \left( \matrice{H} \trans \co^{-1} \matrice{H} \right)^{-1} \matrice{H} \trans.
  \end{equation}
  
  Plugging this in Equation \eqref{Eq:produit_determinants}, we obtain:

  \begin{align}
    \left| \co \right| &= \left| \bs{W} \trans  \co \bs{W} \right|
    \left| \bs{P} \trans  \bs{H} \right|^2
    \left| \bs{H} \trans  \co^{-1} \bs{H} \right| \nonumber \\
    &=
    \left| \bs{W} \trans  \co \bs{W} \right|
    \left| \bs{H} \trans \underbrace{\bs{P} \bs{P} \trans}_{\matrice{I}_p} \bs{H} \right|
    \left| \bs{H} \trans  \co^{-1} \bs{H} \right|.
    \end{align}

  \end{proof}

\subsection{Proof of Proposition \ref{Prop:small_order_asymptotic_decomposition}} \label{App:small_order_asymptotic_decomposition}

\begin{proof}
  Let us consider the $(i,i')$-th element of the matrix $\co$. Letting $K_\theta$ denote the correlation kernel, it is given by $K_\theta \left( \vecteur{x}^{(i)} - \vecteur{x}^{(i')} \right)$. \medskip

  1. With a spherical kernel,

  \begin{equation}
    K_\theta \left( \vecteur{x}^{(i)} - \vecteur{x}^{(i')} \right) := 1 - \frac{3}{2} \frac{\left\| \vecteur{x}^{(i)} - \vecteur{x}^{(i')} \right\|}{\theta} + \frac{1}{2} \frac{\left\| \vecteur{x}^{(i)} - \vecteur{x}^{(i')} \right\|^3}{\theta^3}.
  \end{equation}

  We can identify $g_0(\theta)$ as $-(3/2) \theta^{-1}$, $q$ as $1/2$ and $\matrice{R}_0(\theta)$ as the matrix whose $(i,j)$-th element is $\frac{1}{2} \frac{\left\| \vecteur{x}^{(i)} - \vecteur{x}^{(i')} \right\|^3}{\theta^3}$. \medskip 

  2. With a Power Exponential kernel, 

  \begin{equation}
    K_\theta \left( \vecteur{x}^{(i)} - \vecteur{x}^{(i')} \right) := \exp \left(- \frac{ \left\| \vecteur{x}^{(i)} - \vecteur{x}^{(i')} \right\|^\rho }{\theta^\rho} \right). 
  \end{equation}

    When $\theta \to +\infty$,

    \begin{equation}
      K_\theta \left( \vecteur{x}^{(i)} - \vecteur{x}^{(i')} \right) =  1 - \frac{ \left\| \vecteur{x}^{(i)} - \vecteur{x}^{(i')} \right\|^\rho}{\theta^\rho} + O \left(\theta^{-2\rho}\right). 
    \end{equation}

  We can identify $g_0(\theta)$ as $-\theta^{-\rho}$ and  $q$ as $\rho/2$. Then 
   $\left\|\matrice{R}_0(\theta) \right\| = O(\theta^{-2\rho})$ when $\theta \to \infty$. \medskip

   3. With a Rational Quadratic kernel,

   \begin{equation}
    K_\theta \left( \vecteur{x}^{(i)} - \vecteur{x}^{(i')} \right) := \left( 1 + \frac{ \left\| \vecteur{x}^{(i)} - \vecteur{x}^{(i')} \right\|^2}{\theta^2} \right)^{-\nu}.
   \end{equation}

   When $\theta \to +\infty$,

   \begin{equation}
     K_\theta \left( \vecteur{x}^{(i)} - \vecteur{x}^{(i')} \right) =  1 - \nu \frac{ \left\| \vecteur{x}^{(i)} - \vecteur{x}^{(i')} \right\|^2}{\theta^2} + O(\theta^{-4}).
   \end{equation}

   We can identify $g_0(\theta)$ as $- \nu \theta^{-2}$ and $q$ as $1$. Then 
   $\left\|\matrice{R}_0(\theta) \right\| = O(\theta^{-4})$ when $\theta \to \infty$. \medskip

   4. With a Matérn kernel, we only need to refer to the appropriate decomposition of $\co$ in Appendix \ref{Sec:asymptotic_corr} (Lemma \ref{Lem:Matern_noninteger_nu_asymptotic_expansion} if $\nu$ is no integer and Lemma \ref{Lem:Matern_integer_nu_asymptotic_expansion} if $\nu$ is an integer) to identify $g_0(\theta)$, $\matrice{D}^{(q)}$ and obtain the relevant properties for $\matrice{R}_0(\theta) = \co - \matrice{11} \trans -  g_0(\theta) \matrice{D}^{(q)}$. 

  \begin{itemize}
    \item If $\nu < 1$: 
    \begin{itemize}
     \item $g_0(\theta) 
     = \Gamma(-\nu) \nu^\nu / (\Gamma(\nu) \theta^{2\nu})$;
     \item $q=\nu$;
     \item 
     $\|\matrice{R}(\theta)\| = O(\theta^{-2}) = o(g_0(\theta))$ when $\theta \to \infty$.
    \end{itemize}
    \item If $\nu = 1$:
    \begin{itemize}
      \item $g_0(\theta) 
      = -2 \log(\theta) / \theta^{2}$;
      \item $q=1$;
      \item 
      $\|\matrice{R}(\theta)\| = O(\theta^{-2}) = o(g_0(\theta))$ when $\theta \to \infty$.
    \end{itemize}
    \item If $\nu > 1$: 
    \begin{itemize}
     \item $g_0(\theta) 
     = - \nu (\nu - 1)^{-1}/ \theta^{2}$;
      \item $q=1$;
      \item 
      $\|\matrice{R}(\theta)\| = O   (\theta^{-2 \min(2,\nu)}) = o(g_0(\theta))$ when $\theta \to \infty$.
    \end{itemize}  
  \end{itemize}

\end{proof}

\subsection{Proof of Proposition \ref{Prop:ref_prior_behavior}} \label{App:ref_prior_behavior}

\begin{proof}

  When $\theta \to 0$, $\co$ converges to $\bs{I}_n$, so its inverse does too. 
  Therefore, in order to prove the first assertion, it is enough to prove that $\left\| \de \right\| = o(1)$ for Matérn ($\nu \geqslant 1$) and Squared Exponential kernels and that $\left\| \de \right\| = O(\theta^{2\nu-1})$ for Rational Quadratic kernels. To do this, we prove that these bounds hold for every element of the matrix $\de$. \medskip

  Letting $K_\theta$ be one of the considered kernels, the $(i,i')$-th element of $\de$ is given by $\frac{d}{d \theta} K_\theta \left( \left\| \vecteur{x}^{(i)} - \vecteur{x}^{(i')} \right\| \right)$. \medskip
  
  If $K_\theta$ is Squared Exponential, $\lim_{\theta \to 0} \frac{d}{d \theta} K_\theta \left( \left\| \vecteur{x}^{(i)} - \vecteur{x}^{(i')} \right\| \right) = 0$. This also holds if $K_\theta$ is a Matérn kernel with smoothness $\nu \geqslant 1$ (see \citet{AS64} 9.6.28. and 9.7.2.). 
  \medskip

  If $K_\theta$ is a Rational Quadratic kernel with parameter $\nu>0$, then when $\theta \to 0$, $\frac{d}{d \theta}K_\theta \left( \left\| \vecteur{x}^{(i)} - \vecteur{x}^{(i')} \right\| \right) / \theta^{2\nu-1}$ admits a finite limit. \medskip

  Let us prove the second assertion. \medskip
  
  Lemma \ref{Lem:majoration_derivee_Matern} shows that for Matérn kernels with smoothness $\nu$, for all $\theta>0$ and all $\vecteur{\xi} \in \R^n$:
  
  \begin{equation} 
    0 \leqslant
    \vecteur{\xi} \trans \left( r \theta^{-1} \co - \de \right) \vecteur{\xi}
    \leqslant
    (2\nu+r) \theta^{-1} \vecteur{\xi} \trans \co \vecteur{\xi}.
  \end{equation}
  
  Because of this, Lemma \ref{Lem:ref_prior_majoration} yields an upper bound on the reference prior density:
  
  \begin{equation} 
    \begin{split}
    &\sqrt{\Tr \left[ \left\{ \left( \frac{ d }{d \theta }  \bs{ \Sigma }_{ \theta } \right)  \bs{ \Sigma }_{ \theta }^{-1} \bs{Q}_\theta \right\}^2 \right] 
    -
    \frac{1}{n-p} \left[ \Tr \left\{\left( \frac{ d }{d \theta }  \bs{ \Sigma }_{ \theta } \right)  \bs{ \Sigma }_{ \theta }^{-1} \bs{Q}_\theta \right\}\right]^2 } \\
    \leqslant&
    (n-p) (2\nu+r) \theta^{-1}.
    \end{split}
  \end{equation}
  
  For Squared Exponential and Rational Quadratic kernels, a similar proof is possible. Lemma \ref{Lem:majoration_derivee_RQ_SE} implies that there exists a positive constant $C$ such that for large enough $\theta$, for all $\vecteur{\xi} \in \R^n$,
  
  \begin{equation} 
    0 \leqslant
    \vecteur{\xi} \trans \left( \theta^{-1} \co - \de \right) \vecteur{\xi}
    \leqslant
    C \theta^{-1} \vecteur{\xi} \trans \co \vecteur{\xi}.
  \end{equation}
  
  Like in the Matérn case, Lemma \ref{Lem:ref_prior_majoration} shows that this implies an upper bound on the reference prior density:

  \begin{equation} 
    \begin{split}
    &\sqrt{\Tr \left[ \left\{ \left( \frac{ d }{d \theta }  \bs{ \Sigma }_{ \theta } \right)  \bs{ \Sigma }_{ \theta }^{-1} \bs{Q}_\theta \right\}^2 \right] 
    -
    \frac{1}{n-p} \left[ \Tr \left\{\left( \frac{ d }{d \theta }  \bs{ \Sigma }_{ \theta } \right)  \bs{ \Sigma }_{ \theta }^{-1} \bs{Q}_\theta \right\}\right]^2 } \\
    \leqslant&
    (n-p) C \theta^{-1}.
    \end{split}
  \end{equation}

\end{proof}

\subsection{Proof of Lemma \ref{Lem:upper_bound_yfactor_in_likelihood}} \label{App:upper_bound_yfactor_in_likelihood}

\begin{proof}
The kernel of a matrix $\matrice{M}$ is denoted by $\Ker(\matrice{M})$. \medskip

For Rational Quadratic and Squared Exponential kernels, Lemma \ref{Lem:RQ_SE_asymptotic_expansion} provides an asymptotic expansion of $\co$. When $\theta$ is large enough, 

\begin{equation} \label{Eq:AD_infinitely_differentiable_main_paper}
\co = \sum_{k=0}^\infty \frac{a_k}{\theta^{2k}} \bs{D}^{(k)}.
\end{equation}

In Equation \eqref{Eq:AD_infinitely_differentiable_main_paper}, for every $k$, $\bs{D}^{(k)}$ is the $n \times n$ matrix with $(i,i')$-th element $\| \bs{x}^{(i)} - \bs{x}^{(i')} \|^{2k}$ and $a_k$ is a non-null real number that depends on the kernel.
\medskip

Because $\co$ is nonsingular, the intersection 
$\cap_{k=0}^\infty \Ker \matrice{D}^{(k)}$ is the trivial vector space, i.e. the vector space containing only the null vector. This means there must exist (cf. Lemma \ref{Lem:first_trivial_intersection}) a nonnegative integer $k'$ 
such that the vector space $\cap_{k=0}^{k'} \Ker \left( \bs{W} \trans \bs{D}^{(k)} \bs{W} \right)$ is trivial and such that the vector space $\cap_{0 \leqslant k < k'} \Ker \left( \bs{W} \trans \bs{D}^{(k)} \bs{W} \right)$ is non-trivial (if $k'=0$, the intersection is done over an empty index set, so we take it to be $\R^{n-p}$ by convention). \medskip

This implies (cf. Lemma 
\ref{Lem:minoration_yinversey_Maclaurin}) 
that for any $\vecteur{y}' \in \R^{n-p}$ 
that does not belong to the vector subspace $\mathcal{A}_{k'-1}$ spanned by the columns of the 
matrices $\matrice{W} \trans \matrice{D}^{(k)} \matrice{W}$
($0 \leqslant k \leqslant k'-1$),
there exists $c_{\bs{y}'}>0$ such that for large enough $\theta$,

\begin{equation}
\left(\bs{y}'\right) \trans \left( \bs{W} \trans \co \bs{W} \right)^{-1} \bs{y}' \geqslant c_{\bs{y}'} \left\| \left( \bs{W} \trans \co \bs{W} \right)^{-1} \right\|.
\end{equation}

As a consequence, for every $\bs{y} \in \R^n$ such that 
$\bs{W} \trans \bs{y} \notin \mathcal{A}_{k'-1}$,
there exists $c_{\bs{y}}>0$ such that for large enough $\theta$,
 
\begin{equation}
\bs{y} \trans \bs{W} \left( \bs{W} \trans \co \bs{W} \right)^{-1} \bs{W} \trans \bs{y} \geqslant c_{\bs{y}} \left\| \left( \bs{W} \trans \co \bs{W} \right)^{-1} \right\|.
\end{equation}

Let $\matrice{W} \mathcal{A}_{k'-1}$ denote the vector subspace of $\R^n$
of all vectors $\vecteur{v} \in \R^n$ such that  $\bs{W} \trans \bs{v}$
\emph{does} belong to $\mathcal{A}_{k'-1}$.
Because the matrix $\bs{W} \trans$ has full row rank, 
$\matrice{W} \mathcal{A}_{k'-1}$ is included within a hyperspace
$\mathcal{H}_n$ of $\R^n$. 
Therefore, for every $\bs{y} \in \R^n \setminus \mathcal{H}_n$, 
there exists $c_{\bs{y}}>0$ such that for large $\theta$ 
the equation above holds. \medskip

For Matérn kernels with noninteger smoothness $\nu>0$ (resp. with integer smoothness $\nu>0$), Lemma \ref{Lem:Matern_noninteger_nu_asymptotic_expansion} (resp. Lemma \ref{Lem:Matern_integer_nu_asymptotic_expansion}) allows a similar argument. 

\begin{align} 
  \co &= \sum_{k=0}^{\floor{\nu}} \frac{a_k}{\theta^{2k}} \bs{D}^{(k)} +  \frac{a_\nu}{\theta^{2\nu}} \bs{D}^{(\nu)} +  \bs{R}_\nu(\theta) & \textrm{if } \nu \textrm{ is noninteger.} \label{Eq:AD_Matern_nu_noninteger_simplified}\\
  \co &= \sum_{k=0}^{\nu-1} \frac{a_k}{\theta^{2k}} \bs{D}^{(k)} 
  +  \tilde{a}_\nu \left(  \frac{\log(\theta)}{\theta^{2\nu}} \bs{D}^{(\nu)} +   \frac{1}{\theta^{2\nu}} \bs{\tilde{D}}^{(\nu)} \right) 
  + \bs{\tilde{R}}_\nu(\theta) 
  & \textrm{if } \nu \textrm{ is an integer.} \label{Eq:AD_Matern_nu_integer_simplified}
\end{align}

In these expressions the $a_k$, $a_\nu$ and $\tilde{a}_\nu$ are non-null real numbers, for every $k$, $\bs{D}^{(k)}$ is the $n \times n$ matrix with $(i,i')$-th element $\| \bs{x}^{(i)} - \bs{x}^{(i')} \|^{2k}$, $\bs{D}^{(\nu)}$ is the $n \times n$ matrix with $(i,i')$-th element $\| \bs{x}^{(i)} - \bs{x}^{(i')} \|^{2\nu}$, $\bs{\tilde{D}}^{(\nu)}$ is another non-null symmetric $n \times n$ matrix, and $\matrice{R}_\nu$ (resp.$\matrice{\tilde{R}}_\nu$) is a function  from $(0,+\infty)$ to the space of $n \times n$ matrices such that $\left\|\matrice{R}(\nu)\right\| = o(\theta^{2\nu})$ (resp. $\left\|\matrice{\tilde{R}}(\nu)\right\| = o(\theta^{2\nu})$) when $\theta \to +\infty$. \medskip

With Matérn kernels, when $\theta \to +\infty$, $\left\|\co^{-1}\right\| = O(\theta^{2\nu})$ (cf. Lemma \ref{Lem:Matern_decay}), 
so in the decomposition of $\co$ given by 
Equation \eqref{Eq:AD_Matern_nu_noninteger_simplified} 
(resp. Equation \eqref{Eq:AD_Matern_nu_integer_simplified}), 
the intersection 
$\cap_{k=0}^{\floor{\nu}} \Ker \left( \bs{W} \trans \bs{D}^{(k)} \bs{W} \right) \cap \Ker \left( \bs{W} \trans \bs{D}^{(\nu)} \bs{W} \right)$ (resp. the intersection $\cap_{k=0}^{\nu} \Ker \left( \bs{W} \trans \bs{D}^{(k)} \bs{W} \right) \cap \Ker \left( \bs{W} \trans \bs{\tilde{D}}^{(\nu)} \bs{W} \right)$) is necessarily the trivial vector space. \medskip

The rest of the proof is the same as in the case of Rational Quadratic and Squared Exponential kernels.

\end{proof}

\subsection{Proof of Theorem \ref{THM:REFERENCE_POSTERIOR_PROPER}} \label{App:REFERENCE_POSTERIOR_PROPER}

\begin{proof}

  The first assertion of Proposition \ref{Prop:ref_prior_behavior} implies the reference prior $\pi(\theta)$ is integrable in the neighborhood of 0. Furthermore, when $\theta \to 0$, $\co \to \bs{I}_n$ so the reference posterior $\pi(\theta | \bs{y}) \propto L(\vecteur{y}|\theta) \pi(\theta)$ is integrable in the neighborhood of 0 as well. \medskip
  
  All that remains to be proved is therefore that the reference posterior is integrable in the neighborhood of $+\infty$. In the following $\theta \to +\infty$, so we rely on the asymptotic expansion of $\co$ which is detailed in Appendix \ref{Sec:asymptotic_corr}. \medskip
  
  Let $\mathcal{H}_n$ be the hyperplane of $\R^n$
  defined by Lemma \ref{Lem:upper_bound_yfactor_in_likelihood}.
  Let us fix the observation  vector
  $\vecteur{y} \in \R^n \setminus \mathcal{H}_n$. \medskip
  
  The proof is somewhat trickier for Matérn kernels with integer smoothness, so we tackle this case at the end. Until further notice, assume the kernel is Rational Quadratic, Squared Exponential or Matérn with noninteger smoothness $\nu>1$.
  
  \subsubsection{Rational Quadratic, Squared Exponential and Matérn kernels with noninteger smoothness \texorpdfstring{$\nu>1$}{Lg}} \label{App:RQESMatern_noninteger_soothness}
  
  For Rational Quadratic and Squared Exponential (resp. Matérn with noninteger smoothness parameter $\nu>1$) kernels, Lemma \ref{Lem:RQ_SE_framework_proof_theorem} 
  (resp. Lemma \ref{Lem:Matern_noninteger_smoothness_framework_proof_theorem}) 
  shows how $\bs{W} \trans \co \bs{W}$ can be decomposed as
  
  \begin{equation} \label{Eq:correlation_decomposition}
  \bs{W} \trans \co \bs{W} = g(\theta) \left( \bs{W} \trans \nouveauD \bs{W} + g^{\star}(\theta) \bs{W} \trans \nouveauD^{\star} \bs{W} + \bs{R}_g(\theta) \right),
  \end{equation}
  
  where:
  
  \begin{itemize}
  \item $g$ is a positive differentiable function on $(0,+\infty)$;
  \item $g^{\star}(\theta) = \theta^{-2l}$ with $l \in (0,+\infty)$ (actually, if the kernel is Rational Quadratic or Squared Exponential, $l \in \Z_+$);
  \item $\bs{R}_g$ is a differentiable function 
  from $(0,+\infty)$ to $\mathcal{M}_n$ 
  such that $\| \bs{R}_g(\theta) \| = o(g^{\star}(\theta))$ 
  and $\| \frac{d}{d\theta} \bs{R}_g(\theta) \| = o(g^{\star \prime}(\theta))$;
  \item $\nouveauD$ and $\nouveauD^{\star}$ are both fixed symmetric matrices;
  \item $\bs{W} \trans \nouveauD \bs{W}$ is non-null.
  \end{itemize}
  
  \begin{rmq}
    Readers familiar with \citet{BDOS01} may recognize similarities
    with the assumptions in Lemma 2 of the paper. This is not coincidental. 
    The matrices $\nouveauD$ and $\nouveauD^\star$
    essentially play the roles of the matrices $\matrice{D}$ and
    $\matrice{D}^\star$ respectively.
    However, $\nouveauD$ (resp.  $\nouveauD^\star$) is not 
    necessarily equal to $\matrice{D}$ (resp. $\matrice{D}^\star$). 
    In fact, 
    in the Simple Kriging case where $\matrice{W}$ is the identity
    matrix, $\nouveauD = \matrice{1} \matrice{1} \trans$.
  \end{rmq}

  Let us differentiate $\bs{W} \trans \co \bs{W}$:
  
  \begin{equation} \label{Eq:correlation_derivative_decomposition}
  \frac{d}{d\theta} \bs{W} \trans \co \bs{W} = 
  \frac{g'(\theta)}{g(\theta)} \bs{W} \trans \co \bs{W} + g(\theta) \left( g^{\star \prime} (\theta) \bs{W} \trans \nouveauD^{\star} \bs{W}  +  \frac{d}{d\theta} \bs{R}_g(\theta) \right).
  \end{equation}
  
  This decomposition of the matrix $\frac{d}{d\theta} \bs{W} \trans \co \bs{W}$ 
  implies (cf. Lemma \ref{Lem:sd_eigenvalues}) that 
  it can be replaced in Equation (\ref{Eq:Gibbs_ref_prior_theta}) by 
  $g(\theta) \left( g^{\star \prime} (\theta) \bs{W} \trans \nouveauD^{\star} \bs{W}  +  \frac{d}{d\theta} \bs{R}_g(\theta) \right)$:
  
  \begin{equation}
    \begin{split}
    & \Tr \left[ \left\{ \left( 
    \frac{g'(\theta)}{g(\theta)} \matrice{W} \trans \co \matrice{W} 
    + g(\theta) 
    \left( g^{\star \prime} (\theta) \bs{W} \trans \nouveauD^{\star} \bs{W}  +  \frac{d}{d\theta} \bs{R}_g(\theta) \right)
     \right)  \left(\bs{W} \trans \bs{\Sigma}_\theta \bs{W} \right)^{-1} \right\}^2 \right] \\
    &-
    \frac{1}{n-p} \left[ \Tr \left\{\left( 
    \frac{g'(\theta)}{g(\theta)} \matrice{W} \trans \co \matrice{W} + g(\theta) \left( g^{\star \prime} (\theta) \bs{W} \trans \nouveauD^{\star} \bs{W}  +  \frac{d}{d\theta} \bs{R}_g(\theta) \right)  
    \right) \left(\bs{W} \trans \bs{\Sigma}_\theta \bs{W} \right)^{-1} \right\}\right]^2 \\
    = &
     \Tr \left[ \left\{ g(\theta) \left( g^{\star \prime} (\theta) \bs{W} \trans \nouveauD^{\star} \bs{W}  +  \frac{d}{d\theta} \bs{R}_g(\theta) \right)  \left( \bs{W} \trans \bs{\Sigma}_\theta \bs{W} \right)^{-1} \right\}^2 \right] \\
    & -
    \frac{1}{n-p} \left[ \Tr \left\{g(\theta) \left( g^{\star \prime} (\theta) \bs{W} \trans \nouveauD^{\star} \bs{W}  +  \frac{d}{d\theta} \bs{R}_g(\theta) \right) \left(\bs{W} \trans \bs{\Sigma}_\theta \bs{W} \right)^{-1} \right\}\right]^2 \\
  \end{split}
  \end{equation}

  So $\pi(\theta) \propto w(\theta)$, where 
  
  \begin{equation}
  \begin{split}
  w(\theta)^2:=& 
  \Tr \left[ \left\{ g(\theta) \left( g^{\star \prime} (\theta) \bs{W} \trans \nouveauD^{\star} \bs{W}  +  \frac{d}{d\theta} \bs{R}_g(\theta) \right)  \left( \bs{W} \trans \bs{\Sigma}_\theta \bs{W} \right)^{-1} \right\}^2 \right] \\
  & \quad -
  \frac{1}{n-p} \left[ \Tr \left\{g(\theta) \left( g^{\star \prime} (\theta) \bs{W} \trans \nouveauD^{\star} \bs{W}  +  \frac{d}{d\theta} \bs{R}_g(\theta) \right) \left(\bs{W} \trans \bs{\Sigma}_\theta \bs{W} \right)^{-1} \right\}\right]^2 .
  \end{split}
  \end{equation}
  
  We have $w(\theta) \leqslant \tilde{w}(\theta)$, where
  
  \begin{equation} \label{Eq:ersatz_prior}
  \tilde{w}(\theta):= \sqrt{
  \Tr \left[ \left\{ g(\theta) \left( g^{\star \prime} (\theta) \bs{W} \trans \nouveauD^{\star} \bs{W}  +  \frac{d}{d\theta} \bs{R}_g(\theta) \right)  \left( \bs{W} \trans \bs{\Sigma}_\theta \bs{W} \right)^{-1} \right\}^2 \right] }.
  \end{equation}
  
  If $\bs{W} \trans \nouveauD \bs{W}$ is nonsingular, then 
  $\tilde{w}(\theta) = O(g^{\star \prime} (\theta))$.
  This implies $\pi(\theta) = O(g^{\star \prime} (\theta)) = O(\theta^{-2l-1})$, so the reference prior is proper.
  The likelihood function is bounded 
  due to Proposition \ref{Prop:likelihood_as_product},
  so the reference posterior is proper.
  
  \begin{rmq}
    Recall the decomposition of $\co$ from either 
    Proposition \ref{Prop:small_order_asymptotic_decomposition}
    or Appendix \ref{App:upper_bound_yfactor_in_likelihood}.
    If the vector $\vecteur{1}$ is one of the columns of $\matrice{H}$,
    then we have 
    $\matrice{W} \trans \matrice{D}^{(0)} \matrice{W}
    = \matrice{W} \trans \matrice{11} \trans \matrice{W} 
    = \matrice{0}$. 
    \citet{BDOS01} assume that the matrix $\matrice{D}^{(1)}$
    is necessarily nonsingular, which implies that
    $\matrice{W} \trans \matrice{D}^{(1)} \matrice{W}$ is non-null
    (and thus equal to $\matrice{W} \trans \matrice{Z} \matrice{W}$)
    and even nonsingular, so the paragraph above is applicable.
    This is why they reach the conclusion
    that the reference prior is proper as soon as 
    $\vecteur{1}$ is one of the columns of $\matrice{H}$
    (denoted by $X$ in their article).
    Because the underlying assumption that $\matrice{D}^{(1)}$
    is nonsingular
    does not generally hold (cf. Proposition \ref{Prop:claim_abstract}),
    there is reason to doubt the conclusion.
    Indeed, Figures \ref{Sub:log-prior-b}, \ref{Sub:log-prior-e} and
    \ref{Sub:log-prior-f} do not seem to support
    the claim.
  \end{rmq}

  If $\bs{W} \trans \nouveauD \bs{W}$ is singular, 
  Proposition \ref{Prop:ref_prior_behavior} still ensures that 
  $\pi(\theta) = O(\theta^{-1})$. 
  Moreover, for any non-null vector $\vecteur{\xi} \in \R^{n-p}$
  that belongs to the kernel of $\matrice{W} \trans \nouveauD \matrice{W}$,
  $\vecteur{\xi} \trans \matrice{W} \trans \co \matrice{W} \vecteur{\xi}
  =
  O(g(\theta) g^\star(\theta))$.
  A fortiori,
  $v_{n-p}(\theta) = O(g(\theta) g^{\star}(\theta))$. 
  As the rank of $\bs{W} \trans \nouveauD \bs{W}$ is at least one, 
  $v_1(\theta)^{-1}=O(g(\theta)^{-1})$. 
  Gathering this, $v_{n-p}(\theta) / v_{1}(\theta) = O(g^{\star}(\theta))$.
  Proposition \ref{Prop:likelihood_as_product} implies that
  $\likelihood = O(g^{\star}(\theta))^{1/2}) = O(\theta^{-l})$. 
  The reference posterior is proportional to 
  $\likelihood \pi(\theta) = O(\theta^{-l-1})$ and is proper. \medskip
  
  \subsubsection{Matérn kernels with integer smoothness \texorpdfstring{$\nu$}{Lg}} \label{App:Matern_integer_soothness}
  
  We now address the case where the correlation kernel is Matérn with integer smoothness $\nu$. The proof strategy remains the same as for the other kernels, but the execution is a little trickier. 
  \medskip
  
  $\matrice{D}^{(\nu)}$ is the $n \times n$ matrix with $(i,i')$-th element $\| \bs{x}^{(i)} - \bs{x}^{(i')} \|^{2\nu} $. Let $\bs{\tilde{D}}^{(\nu)}$ denote the  $n \times n$ matrix with null diagonal and $(i,i')$-th element ($i\neq i'$) given by 
  $$\| \bs{x}^{(i)} - \bs{x}^{(i')} \|^{2\nu} \left\{-\log \left( \| \bs{x}^{(i)} - \bs{x}^{(i')} \| \right) - \frac{\log(\nu)}{2} - \gamma + \sum_{l=1}^\nu \frac{1}{2 l} \right\} ,$$ where $\gamma$ is Euler's constant.
  \medskip

  Both $\matrice{D}^{(\nu)}$ and $\bs{\tilde{D}}^{(\nu)}$ can appear in the decomposition of $\bs{W} \trans \co \bs{W}$ provided by Lemma \ref{Lem:Matern_integer_smoothness_framework_proof_theorem}:

  \begin{equation} 
    \bs{W} \trans \co \bs{W} = g(\theta) \left( \bs{W} \trans \nouveauD \bs{W} + g^{\star}(\theta) \bs{W} \trans \nouveauD^{\star} \bs{W} + \bs{R}_g(\theta) \right),
  \end{equation}
    
  where:
    
  \begin{itemize}
  \item $g$ is a positive differentiable function on $(1,+\infty)$;

  \item $\nouveauD$ and $\nouveauD^{\star}$ are both fixed symmetric matrices;
  \item $\bs{W} \trans \nouveauD \bs{W}$ is non-null;
  \item $g^{\star}(\theta) = \log(\theta)^{-1}$ if there exist non-null real numbers $\lambda, \lambda^\star$ such that $\nouveauD= \lambda \matrice{D}^{(\nu)}$ and $\nouveauD^\star = \lambda^\star \matrice{\tilde{D}}^{(\nu)}$;
  \item  $g^{\star}(\theta) = \theta^{-2l}$ or $g^{\star}(\theta) = \log(\theta)\theta^{-2l}$ with $l \in (0,+\infty)$ otherwise;
  \item $\bs{R}_g$ is a differentiable function from $(0,+\infty)$ to $\mathcal{M}_n$ such that $\| \bs{R}_g(\theta) \| = o(g^\star(\theta))$ and $\| \frac{d}{d\theta} \bs{R}_g(\theta) \| = o(g^{\star \prime}(\theta))$ when $\theta \to +\infty$.
  \end{itemize}
  
  First, assume either that for all $\lambda \neq 0$, $\nouveauD \neq \lambda \bs{D}^{(\nu)}$ \emph{or} that for all $\lambda^\star \neq 0$, $\nouveauD^{\star} \neq \lambda^\star \tilde{\bs{D}}^{(\nu)}$. 
  
  In Equation (\ref{Eq:correlation_decomposition}), according to Lemma \ref{Lem:Matern_integer_smoothness_framework_proof_theorem}, $g^{\star}(\theta)$ may be $\theta^{-2l} \log(\theta)$ 
  instead of $\theta^{-2l}$. If $g^{\star}(\theta) = \theta^{-2l}$ for some $l \in (0,+\infty)$, then the proof is the same as for Rational Quadratic, Squared Exponential and Matérn kernels with noninteger $\nu$. Assume therefore that $g^{\star}(\theta) = \theta^{-2l} \log(\theta)$ for some $l \in (0,+\infty)$. 
  Then its derivative is $g^{\star \prime}(\theta) = \theta^{-2l-1}(1-2l \log(\theta))$.

  If $\bs{W} \trans \nouveauD \bs{W}$ is nonsingular, 
  then the reference prior distribution is proper since 
  $\pi(\theta) = O(g^{\star \prime}(\theta)) = O(\theta^{-2l-1} \log(\theta))$. 
  Proposition \ref{Prop:likelihood_as_product} guarantees that 
  the likelihood function is bounded and therefore 
  that the reference posterior is proper.
  
  If $\bs{W} \trans \nouveauD \bs{W}$ is singular, 
  Proposition \ref{Prop:ref_prior_behavior} still ensures that 
  the reference prior is $O(\theta^{-1})$.  
  Given the rank of $\bs{W} \trans \nouveauD \bs{W}$ is at least one, 
  $v_{n-p}(\theta) / v_{1}(\theta) = O(g^{\star}(\theta))$. 
  Proposition \ref{Prop:likelihood_as_product}
  implies that
  $\likelihood = O(g^{\star}(\theta)^{1/2})= O(\theta^{-l} \log(\theta)^{1/2})$, so the reference posterior is proportional to $\likelihood \pi(\theta) = O(\theta^{-l-1} \log(\theta)^{1/2})$ and thus proper. \medskip
  
  Now, assume there exist non-null real numbers $\lambda, \lambda^\star$ such that $\nouveauD = \lambda \bs{D}^{(\nu)}$ \emph{and} $\nouveauD^{\star} = \lambda^{\star} \tilde{\bs{D}}^{(\nu)}$. 
  
  In that case, according to Lemma \ref{Lem:Matern_integer_smoothness_framework_proof_theorem}, $g^{\star}(\theta) = \log(\theta)^{-1}$. Its derivative is $g^{\star \prime}(\theta) =-\theta^{-1} \log(\theta)^{-2}$. 
  
  If $\bs{W} \trans \nouveauD \bs{W}$ is nonsingular, 
  the reference prior is proper since
  $\pi(\theta) = O(g^{\star \prime}(\theta)) = O(\theta^{-1} \log(\theta)^{-2})$.
  Proposition \ref{Prop:likelihood_as_product}
  implies that 
  the likelihood function is bounded and therefore 
  that the reference posterior is proper.
  
  If $\bs{W} \trans \nouveauD \bs{W}$ is singular, 
  it nevertheless turns out that for large enough $\theta$, 
  $\bs{W} \trans \nouveauD \bs{W} + g^\star(\theta) \bs{W} \trans \nouveauD^\star \bs{W}$ 
  is nonsingular. 
  This is due to Lemma \ref{Lem:Matern_decay}, 
  which asserts that $\left\|\co^{-1} \right\|= O(\theta^{2\nu})$. 
  The reference prior is then 
  $O(g^{\star \prime}(\theta) g^{\star}(\theta)^{-1}) = O(\theta^{-1} \log(\theta)^{-1})$. 
  Besides, as the rank of $\bs{W} \trans \nouveauD \bs{W}$ is 
  at least one, $1/v_1(\theta) = O(g(\theta)^{-1})$ and therefore 
  $v_{n-p}(\theta) / v_{1}(\theta) = O(g^{\star}(\theta))$. 
  Proposition \ref{Prop:likelihood_as_product}
  implies that 
  $\likelihood = O(g^{\star}(\theta)^{1/2}) =  O(\log(\theta)^{-1/2})$.
  The reference posterior is then proportional to 
  $\likelihood \pi(\theta) = O(\theta^{-1} \log(\theta)^{-3/2})$
  and is proper.
  
\end{proof}

\subsection{A more precise formulation of Lemma \ref{Lem:upper_bound_yfactor_in_likelihood},
Proposition \ref{Prop:likelihood_as_product} and Theorem \ref{THM:REFERENCE_POSTERIOR_PROPER}} \label{App:Precise_formulation}

The proof of Lemma \ref{Lem:upper_bound_yfactor_in_likelihood}
actually proves a slightly stronger result,
which we provide in this section.
This stronger result in turn leads to slightly stronger
versions of 
Proposition \ref{Prop:likelihood_as_product} and
Theorem \ref{THM:REFERENCE_POSTERIOR_PROPER}. \medskip

In order to be able to state this result,
we must use the notations of the proof of Lemma
\ref{Lem:upper_bound_yfactor_in_likelihood},
together with additional definitions:

\begin{defn}
  For any vector $\vecteur{y} \in \R^n$ and
  for any nonnegative real number $t$, 
  let $\mathbb{A}_t(\vecteur{y})$ be the following statement:
  \begin{quote}
    $\matrice{W} \trans \vecteur{y}$ does not belong to the vector
    subspace of $\R^{n-p}$ spanned by the columns of the matrices
    $\matrice{W} \trans \matrice{D}^{(k)} \matrice{W}$ with nonnegative
    interger $k$ strictly smaller than $t$ (resp. 
    $\matrice{W} \trans \vecteur{y} \neq \vecteur{0}$ if $t=0$). However, the vector subspace
    of $\R^{n-p}$ spanned by these columns and the columns of 
    $\matrice{W} \trans \matrice{D}^{(t)} \matrice{W}$ (resp. by the
    columns of $\matrice{W} \trans \matrice{D}^{(0)} \matrice{W}$ if $t=0$) is $\R^{n-p}$ itself.
  \end{quote}
\end{defn}
  
\begin{defn}
  For any vector $\vecteur{y} \in \R^n$ and
  for any positive integer $\nu$, 
  let $\mathbb{\tilde{A}}_\nu(\vecteur{y})$ be the following statement:
  \begin{quote}
    $\matrice{W} \trans \vecteur{y}$ does not belong to the vector
    subspace of $\R^{n-p}$ spanned by the columns of the matrices
    $\matrice{W} \trans \matrice{D}^{(k)} \matrice{W}$ with nonnegative
    interger $k$ smaller or equal to $\nu$. However, the vector subspace
    of $\R^{n-p}$ spanned by these columns and the columns of 
    $\matrice{W} \trans \matrice{\tilde{D}}^{(\nu)} \matrice{W}$ is $\R^{n-p}$ itself.
  \end{quote}
\end{defn}

\begin{rmq}
  In the case of Simple Kriging, in both definitions, $p=0$
  and the matrix $\matrice{W}$ is the identity $n \times n$ matrix.
\end{rmq}

The more precise version of Lemma \ref{Lem:upper_bound_yfactor_in_likelihood} is:

\begin{lem} \label{Lem:upper_bound_yfactor_in_likelihood_details}
  Depending on the correlation kernel, 
  the following condition on $\vecteur{y} \in \R^n$
  is sufficient 
  for Equation \eqref{Eq:majoration_norm_inverse_correlation}
  when $\theta \to +\infty$:
  \begin{itemize}
    \item \emph{Rational Quadratic and Squared Exponential kernels:}
    there exists a nonnegative integer $k'$ such that Assumption
    $\mathbb{A}_{k'}(\vecteur{y})$ holds;
    \item \emph{Matérn kernels with noninteger smoothness $\nu>1$:}
    either there exists a nonnegative integer $k'<\nu$ such that Assumption
    $\mathbb{A}_{k'}(\vecteur{y})$ holds or 
    Assumption $\mathbb{A}_\nu(\vecteur{y})$ holds;
    \item \emph{Matérn kernels with integer smoothness $\nu \geqslant 1$:}
    either there exists a nonnegative integer $k' \leqslant \nu$ such that 
    Assumption $\mathbb{A}_{k'}(\vecteur{y})$ holds 
    or Assumption $\mathbb{\tilde{A}}_\nu(\vecteur{y})$ holds.
  \end{itemize}
  Regardless of whether the kernel is Rational Quadratic,
  Squared Exponential,
  or Matérn (with integer or noninteger smoothness $\nu \geqslant 1$),
  the set of all $\vecteur{y} \in \R^n$ that do not satisfy this
  sufficient condition is a vector subspace of $\R^n$
  of dimension smaller or equal to $n-1$.
\end{lem}

\begin{proof}
  The proof of Lemma \ref{Lem:upper_bound_yfactor_in_likelihood}
  also proves this result.
\end{proof}

The condition on $\vecteur{y} \in \R^n$ 
stated in Lemma \ref{Lem:upper_bound_yfactor_in_likelihood_details}
replaces the condition from Lemma \ref{Lem:upper_bound_yfactor_in_likelihood}
about $\vecteur{y}$ not belonging to $\mathcal{H}_n$. \medskip

Proposition \ref{Prop:likelihood_as_product} can thus be replaced by the following proposition.

\begin{prop} \label{Prop:likelihood_as_product_details}
  The condition 
  on $\vecteur{y} \in \R^n$
  stated
  in Lemma \ref{Lem:upper_bound_yfactor_in_likelihood_details} 
  is sufficient
  for Equation \eqref{Eq:likelihood_domination} to hold when $\theta \to +\infty$.    
\end{prop}

This more precise version of 
Proposition \ref{Prop:likelihood_as_product}
leads to a more precise version of Theorem 
\ref{THM:REFERENCE_POSTERIOR_PROPER}:
  
\begin{thm} \label{THM:REFERENCE_POSTERIOR_PROPER_details}
  The condition 
  on $\vecteur{y} \in \R^n$
  stated
  in Lemma \ref{Lem:upper_bound_yfactor_in_likelihood_details} 
  is sufficient 
  for the reference posterior distribution $\pi(\theta|\vecteur{y})$
  to be proper. 
  The set of all $\vecteur{y} \in \R^n$ that do not satisfy this
  sufficient condition is a vector subspace of $\R^n$
  of dimension smaller or equal to $n-1$.
\end{thm}

\begin{proof}
  The proof of Theorem \ref{THM:REFERENCE_POSTERIOR_PROPER}
  can be used to prove this result, provided
  Proposition \ref{Prop:likelihood_as_product_details}
  is used instead of Proposition
  \ref{Prop:likelihood_as_product}.
\end{proof}

\section{Some tail rates of likelihood and prior} \label{App:tail_rates}

The purpose of this appendix is twofold. First, to list examples that show how the tail rate of the reference prior density varies to accomodate the various tail rates of the likelihood function while making sure the reference posterior is always proper. Second, to show that adressing the various cases considered in the proof of Theorem \ref{THM:REFERENCE_POSTERIOR_PROPER}, Appendix \ref{App:REFERENCE_POSTERIOR_PROPER}, is not merely necessary for the sake of mathematical rigor, but because these cases do occur in practice. The proofs of the results presented in this Appendix rely on auxiliary facts from Appendix \ref{App:auxiliary_facts}. Appendix \ref{App:auxiliary_facts} can be found in supplementary material.\medskip

Figure \ref{Fig:likelihood} gives a sample of the wide variety of tail rates for the likelihood function depending on the Kriging model and the smoothness of the correlation kernel.
Among the cases considered in Figure \ref{Fig:likelihood}, 
the most remarkable is \ref{sub:likelihood_O1}.  
Affine Kriging 
with a Matérn kernel with smoothness $\nu=3/2$
leads to a likelihood function that does not vanish
when $\theta \to +\infty$. \medskip

\begin{figure}[ht]
  \begin{center}
    \subfloat[Line: $-2 \log(\theta)+13.5$]{
      \includegraphics[angle=0,width=0.3\textwidth, height=0.25\linewidth]{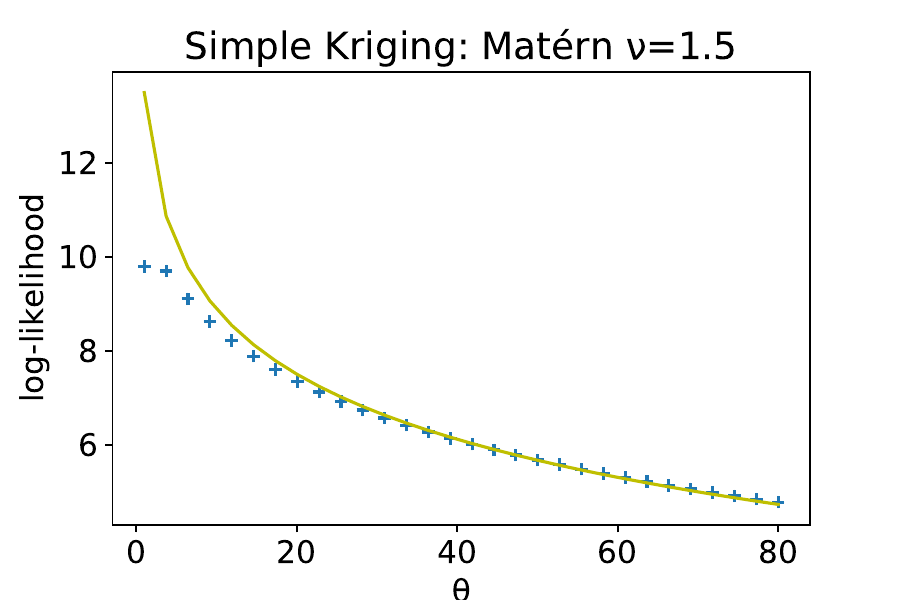}
    }
    \subfloat[Line: $-0.5\log(\theta)+14$]{
      \includegraphics[angle=0,width=0.33\textwidth, height=0.25\linewidth]{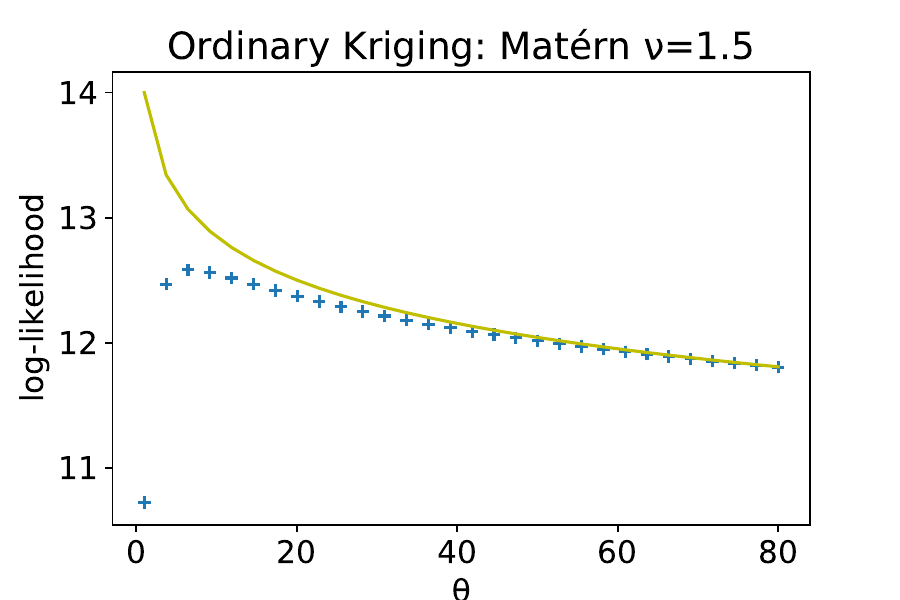}
    }
    \subfloat[]{
      \includegraphics[angle=0,width=0.33\textwidth, height=0.25\linewidth]{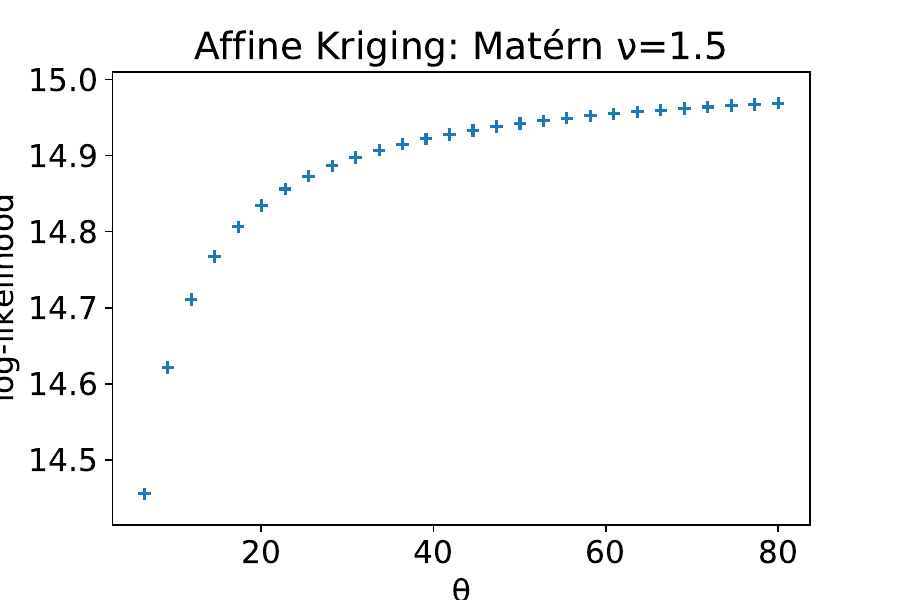}
      \label{sub:likelihood_O1}
    }
    \\
    \subfloat[Line: $-4.5\log(\theta)+25.5$]{
      \includegraphics[angle=0,width=0.33\textwidth, height=0.25\linewidth]{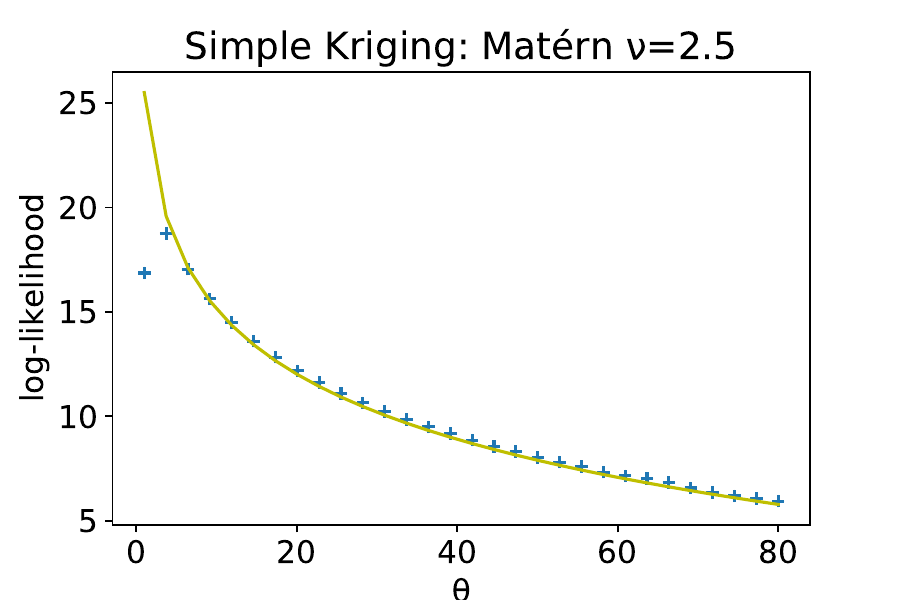}
    }
    \subfloat[Line: $-2\log(\theta)+25.7$]{
      \includegraphics[angle=0,width=0.33\textwidth, height=0.25\linewidth]{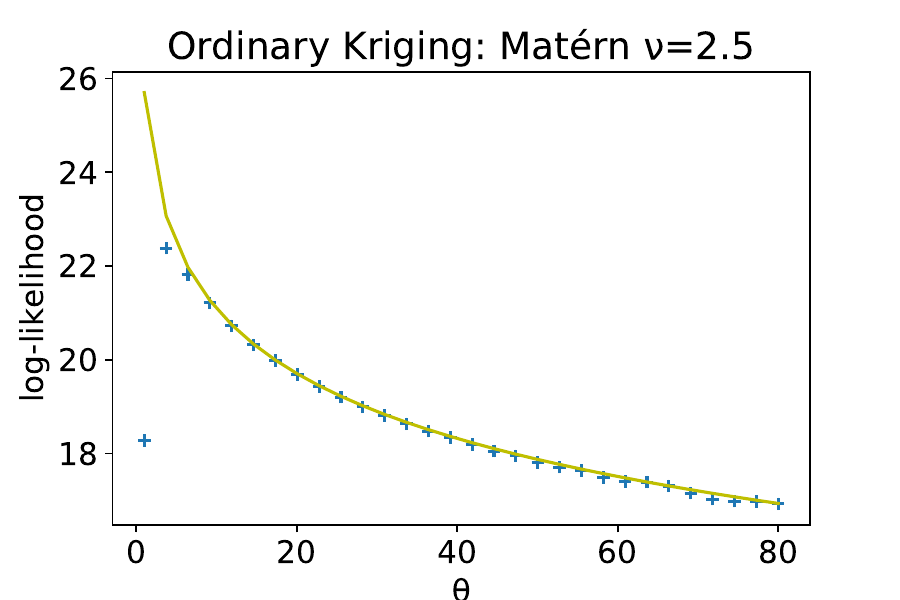}
    }   
    \subfloat[Line: $-0.5\log(\theta)+25.95$]{
      \includegraphics[angle=0,width=0.33\textwidth, height=0.25\linewidth]{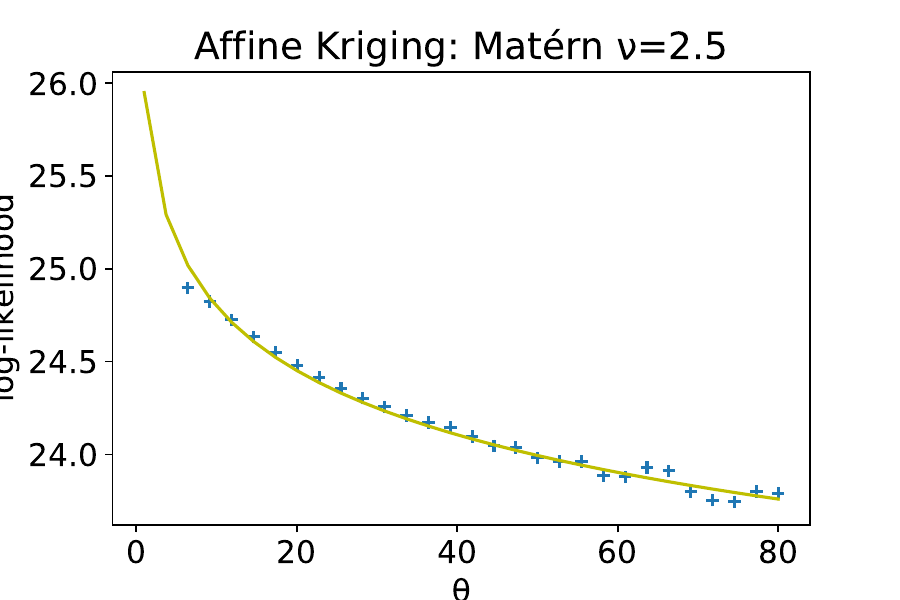}
    }     
    \caption{Logarithm of the likelihood function for large values of $\theta$ for varying Kriging models and correlation kernels.
    In each case, the design space is one-dimensional ($r=1$) and the design set is the 10-point regular grid on $[0,1]$. At every design point $x$, the observed value is $\sin(\pi x)$.}
    \label{Fig:likelihood}
  \end{center}
\end{figure}

This behavior of the likelihood is a good reason to investigate the behavior of the tail rate of the reference prior density more closely than we did in Proposition \ref{Prop:ref_prior_behavior}. \medskip

All propositions in this appendix are valid under the assumption that some $n$-point design set has been fixed and that $n>p$.

\begin{prop} \label{Prop:tail_rates_affine_kriging}
  In the case of Affine Kriging, with a Matérn kernel with smoothness $\nu \in (1,2)$, the reference prior $\pi(\theta)$ is $O(\theta^{-2(2-\nu)-1})$ when $\theta \to +\infty$. 
  It is a proper prior distribution.
  Furthermore, for every $\vecteur{y} \in \R^n$ 
  such that $\matrice{W} \trans \vecteur{y}$ is non-null,
  the likelihood function converges to a non-null constant when $\theta \to +\infty$.
\end{prop}

\begin{rmq}
  While this Proposition only provides an upper bound for the tail rate of the reference prior, the bound $O(\theta^{-2(2-\nu)-1})$ seems to be tight, judging by Figure \ref{Sub:log-prior} which was obtained with $\nu=3/2$.
\end{rmq}

\begin{proof}
  According to Lemma \ref{Lem:Matern_noninteger_nu_asymptotic_expansion}, $\co$ can be written as:

  \begin{equation}
    \co = \matrice{D}^{(0)} + \frac{a_1}{\theta^2} \matrice{D}^{(1)}
    + \frac{a_\nu}{\theta^{2\nu}} \matrice{D}^{(\nu)} + \frac{a_2}{\theta^4} \matrice{D}^{(2)} + \matrice{R}(\theta).
  \end{equation}
  
  In the expression above, 
  \begin{itemize}
  \item $a_1 = - \Gamma(\nu-1) \nu / \Gamma(\nu)$, 
  \item $a_2 = \Gamma(\nu-2) \nu^2 / \Gamma(\nu)$, 
  \item $a_\nu = \Gamma(-\nu) \nu^\nu / \Gamma(\nu)$, 
  \item $\matrice{D}^{(0)}$ is the $n \times n$ matrix filled with ones, 
  \item $\matrice{D}^{(1)}$ is the $n \times n$ matrix with $(i,i')$-th element $\left\| \vecteur{x}^{(i)} - \vecteur{x}^{(i')} \right\|^2$,  
  \item $\matrice{D}^{(2)}$ is the $n \times n$ matrix with $(i,i')$-th element $\left\| \vecteur{x}^{(i)} - \vecteur{x}^{(i')} \right\|^4$,  
  \item $\matrice{D}^{(\nu)}$ is the $n \times n$ matrix with $(i,i')$-th element $\left\| \vecteur{x}^{(i)} - \vecteur{x}^{(i')} \right\|^{2\nu}$,  
  \item $\matrice{R}$ is a differentiable function from $(0,+\infty)$ to the space of real $n \times n$ matrices that satisfies
  $\| \bs{R}(\theta) \| =  o(\theta^{-4})$ and $\| \frac{d}{d \theta} \bs{R}(\theta) \| =  o(\theta^{-5})$ when $\theta \to +\infty$.
  \end{itemize}

  Under Affine Kriging, $\matrice{H}$ is the $n \times (r+1)$ matrix 
  whose first column is $\vecteur{1}$ (the vector of $\R^n$ filled with ones) 
  and whose last $r$ columns contain the coordinates of the design set. \medskip

  Because $\matrice{W}$ is orthogonal to $\matrice{H}$, \citet{Sch37} (for example) implies that $\matrice{W} \trans \matrice{D}^{(0)} \matrice{W}$ and $\matrice{W} \trans \matrice{D}^{(1)} \matrice{W}$ are both null. We have therefore

  \begin{equation} \label{Eq:DL_Matern_1_2}
    \matrice{W} \trans \co \matrice{W} =
    \frac{a_\nu}{\theta^{2\nu}} \matrice{W} \trans \matrice{D}^{(\nu)} \matrice{W} + \frac{a_2}{\theta^4}  \matrice{W} \trans \matrice{D}^{(2)} \matrice{W} +  \matrice{W} \trans \matrice{R}(\theta) \matrice{W}.
  \end{equation}

  Using the notations from Appendix \ref{App:REFERENCE_POSTERIOR_PROPER}, we can identify $\nouveauD:=a_{\nu} \matrice{D}^{(\nu)}$ and $\nouveauD^\star:=a_{2}\matrice{D}^{(2)}$. Thus $g(\theta):=\theta^{-2\nu}$ and $g(\theta)^\star:=\theta^{-2l}$ with $l=2-\nu$. \medskip

  Besides, Lemma \ref{Lem:Matern_decay} asserts that $\left\| \co^{-1} \right\| = O(\theta^{2\nu})$ when $\theta \to +\infty$. Applying Lemma \ref{Lem:switch_pov}, 
  
  \begin{equation}
    \co^{-1} = \matrice{W} \left(\matrice{W} \trans \co \matrice{W} \right)^{-1} \matrice{W} \trans + \co^{-1} \matrice{H} \left(\matrice{H} \trans \co^{-1} \matrice{H}\right)^{-1} \matrice{H} \trans \co^{-1}.
  \end{equation}
  
  As both matrices in the right term are positive semi-definite,
  we have 
  
  \begin{equation}
  \left\| \matrice{W} \left(\matrice{W} \trans \co \matrice{W} \right)^{-1} \matrice{W} \trans \right\| = O(\theta^{2\nu}).
  \end{equation}
  
  And since $\matrice{W} \trans \matrice{W} = \matrice{I}_{n-p}$, this implies that $\left\| \left(\matrice{W} \trans \co \matrice{W} \right)^{-1} \right\| = O(\theta^{2\nu})$. \medskip

  Therefore 
  $\matrice{W} \trans \matrice{D}^{(\nu)} \matrice{W}$ is nonsingular. 
  The proof of Theorem \ref{THM:REFERENCE_POSTERIOR_PROPER} given in 
  Appendix \ref{App:REFERENCE_POSTERIOR_PROPER}
  yields the first assertion of this Proposition: 
  the paragraph of Appendix \ref{App:REFERENCE_POSTERIOR_PROPER} 
  below Equation \eqref{Eq:ersatz_prior} establishes in this situation 
  that $\pi(\theta) = O(\theta^{-2l-1}) = O(\theta^{-2(2-\nu)-1})$ 
  and that the reference prior $\pi(\theta)$ is proper. \medskip
 
  Moreover, given that
  $\matrice{W} \trans \matrice{D}^{(\nu)} \matrice{W}$
  is nonsingular, Equation \eqref{Eq:DL_Matern_1_2} implies that, 
  for every $\vecteur{y} \in \R^n$
  such that $\matrice{W} \trans \vecteur{y}$ is non-null, 

  \begin{align}
    & \lim_{\theta \to +\infty}
    \left| \matrice{W} \trans \co \matrice{W} \right|^{-\frac{1}{2}}
    \left( \vecteur{y} \trans \matrice{W} \right.
    \left(\matrice{W} \trans \co \matrice{W} \right)^{-1}
    \left. \matrice{W} \trans \vecteur{y} \right)^{-\frac{n-p}{2}} 
    \nonumber \\
    &=
    \left| \matrice{W} \trans \matrice{D}^{(\nu)} \matrice{W} \right|^{-\frac{1}{2}}
    \left( \vecteur{y} \trans \matrice{W} \right.
    \left(\matrice{W} \trans \matrice{D}^{(\nu)} \matrice{W} \right)^{-1}
    \left. \matrice{W} \trans \vecteur{y} \right)^{-\frac{n-p}{2}} .
  \end{align}

  This implies that the likelihood function converges to a non-null constant.
\end{proof}

As mentioned in Appendix \ref{App:REFERENCE_POSTERIOR_PROPER}, 
the case of Matérn kernels with integer smoothness is particularly tricky. 
Figure \ref{Fig:Matern_nu_1} focuses on the case where $\nu=1$. 
Once again, the most striking subfigure is the one showing the likelihood 
function under Affine Kriging: \ref{sub:likelihood_Matern_1_O1}.
The behavior of the likelihood function and reference prior in this case
is given in the following Proposition.

\begin{figure}[ht]
  \begin{center}
    \subfloat[$-\log(\theta)+1.8$]{
      \includegraphics[angle=0,width=0.33\textwidth, height=0.25\linewidth]{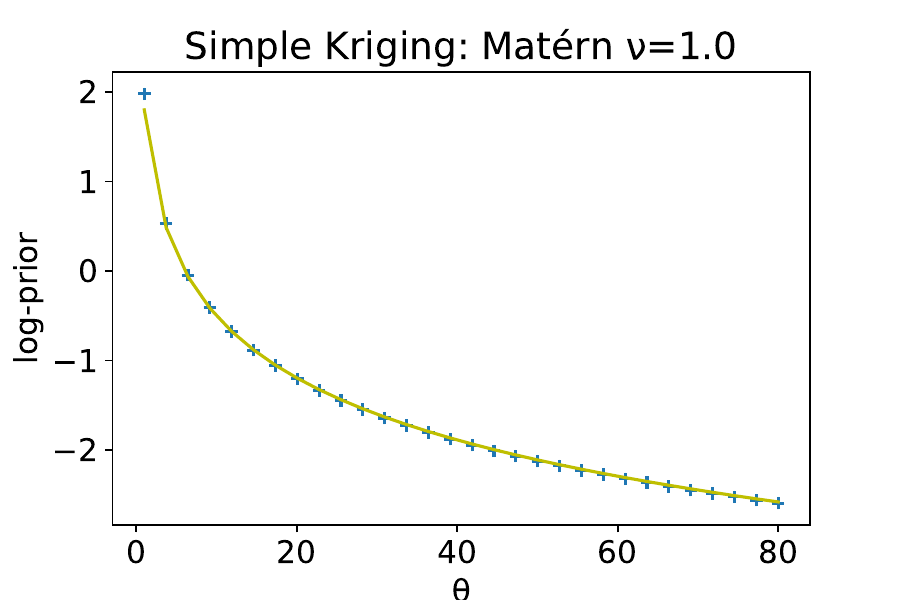}
    }
    \subfloat[$-\log(\theta)-\log(\log(\theta))+1$]{
      \includegraphics[angle=0,width=0.33\textwidth, height=0.25\linewidth]{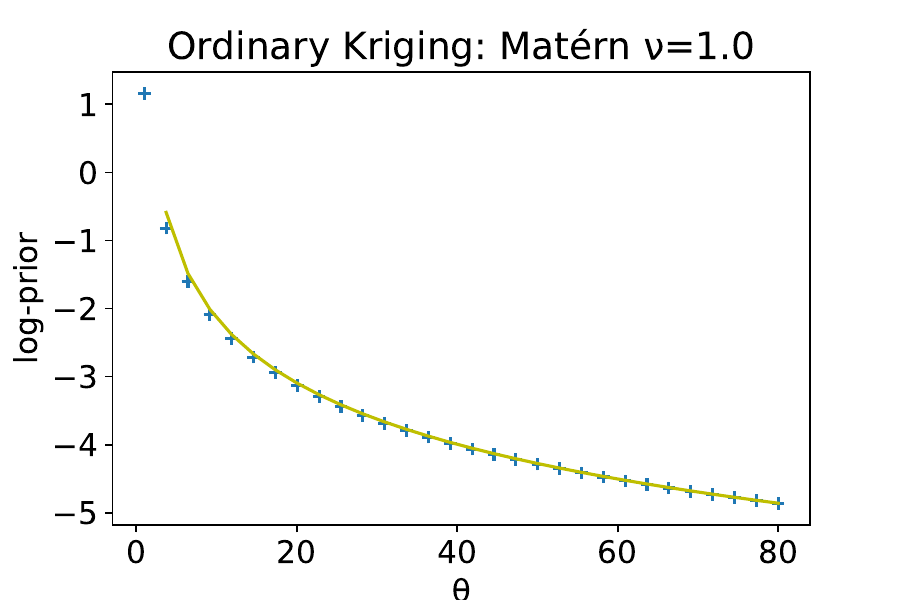}
      \label{Sub:log-prior_Ordinary_Kriging_Matern_1}
    }
    \subfloat[$-3\log(\theta)+\log(\log(\theta))+0.7$]{
      \includegraphics[angle=0,width=0.33\textwidth, height=0.25\linewidth]{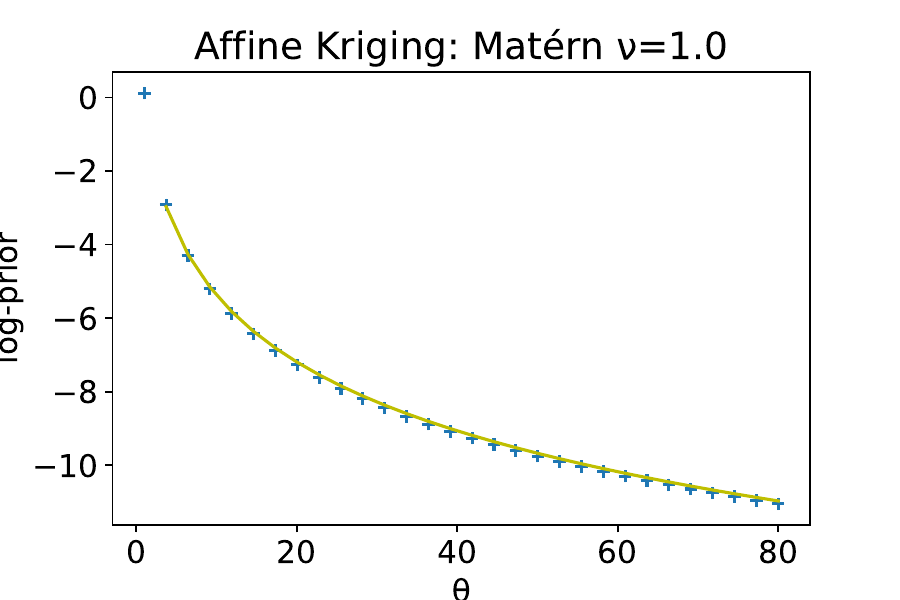}
      \label{Sub:log-prior_Affine_Kriging_Matern_1}
    }
    \\
    \subfloat[$-\log(\theta)-\frac{1}{2}\log(\log(\theta))+7.1$]{
      \includegraphics[angle=0,width=0.33\textwidth, height=0.25\linewidth]{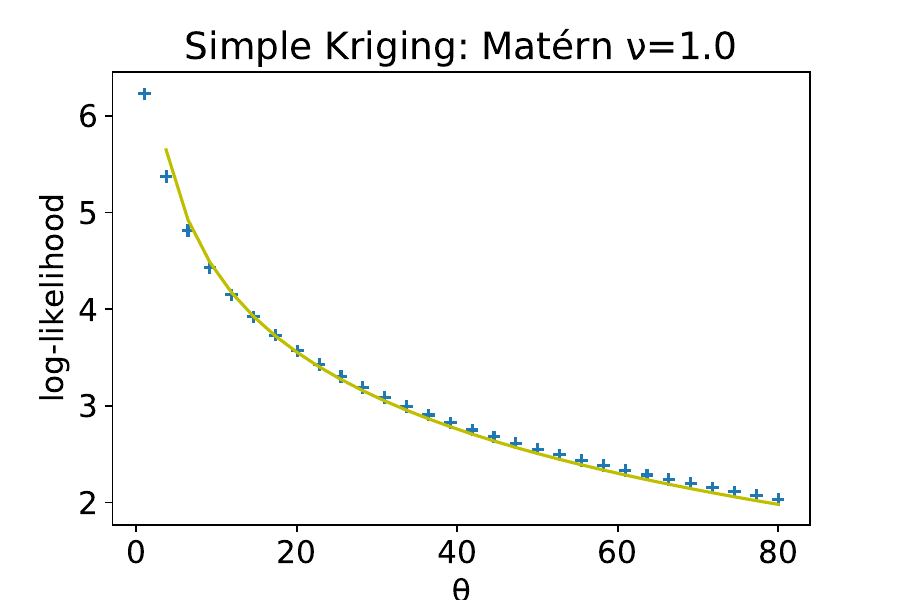}
    }
    \subfloat[$-\frac{1}{2}\log(\log(\theta))+7.74$]{
      \includegraphics[angle=0,width=0.33\textwidth, height=0.25\linewidth]{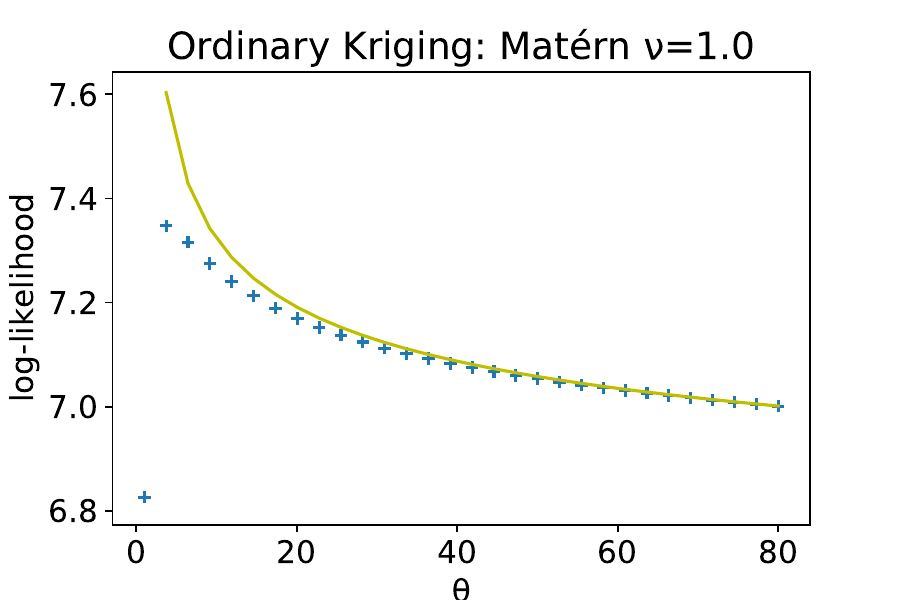}
      \label{Sub:likelihood_Ordinary_Kriging_Matern_1}
    }   
    \subfloat[]{
      \includegraphics[angle=0,width=0.33\textwidth, height=0.25\linewidth]{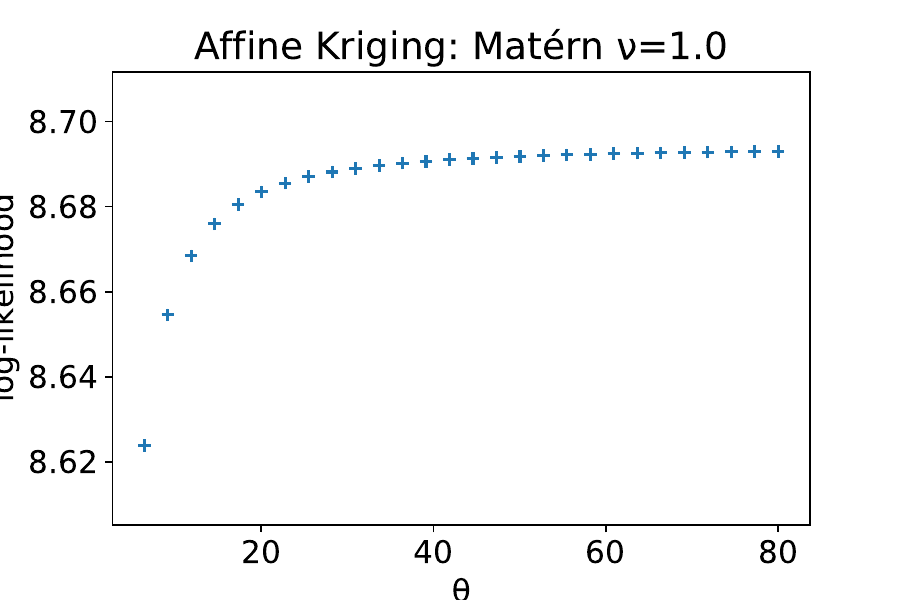}
      \label{sub:likelihood_Matern_1_O1}
    }     
    \caption{Logarithm of the prior (top) and likelihood function (bottom) 
    for large values of $\theta$ for varying Kriging models 
    using the Matérn kernel with smoothness $\nu=1$. 
    In each case, the design space is one-dimensional ($r=1$) 
    and the design set is the 10-point regular grid on $[0,1]$. 
    At every design point $x$, the observed value is $\sin(\pi x)$.}
    \label{Fig:Matern_nu_1}
  \end{center}
\end{figure}

\begin{prop} \label{Prop:tail_rates_affine_kriging_Matern_1}
  In the case of Affine Kriging, 
  with a Matérn kernel with smoothness $\nu=1$, 
  the reference prior $\pi(\theta)$ is $O(\theta^{-3} \log(\theta))$ 
  when $\theta \to +\infty$. 
  It is a proper prior distribution.
  Furthermore, for every $\vecteur{y} \in \R^n$ such that 
  $\matrice{W} \trans \vecteur{y}$ is non-null,
  the likelihood function converges to a non-null constant 
  when $\theta \to +\infty$.
\end{prop}

\begin{rmq}
  While this Proposition is only able to provide an upper bound for the tail rate of the reference prior, the bound $O(\theta^{-3} \log(\theta))$ seems to be tight, judging by Figure \ref{Sub:log-prior_Affine_Kriging_Matern_1}.
\end{rmq}

\begin{proof}
  
  We use Lemma \ref{Lem:Matern_integer_nu_asymptotic_expansion} to obtain:
  
  \begin{equation} \label{Eq:asymptotic_expansion_nu_1}
    \co = \matrice{D}^{(0)} -2 \left( \frac{\log(\theta)}{\theta^{2}} \matrice{D}^{(1)} + \frac{1}{\theta^2} \matrice{\tilde{D}}^{(1)} \right) 
    + \frac{\log(\theta)}{\theta^4} \matrice{D}^{(2)} + \matrice{R}(\theta).
  \end{equation}
  
  In the expression above, 
  \begin{itemize}
  \item $\matrice{D}^{(0)}$ is the $n \times n$ matrix filled with ones, 
  \item $\matrice{D}^{(1)}$ is the $n \times n$ matrix with $(i,i')$-th element $\left\| \vecteur{x}^{(i)} - \vecteur{x}^{(i')} \right\|^2$,  
  \item $\matrice{D}^{(2)}$ is the $n \times n$ matrix with $(i,i')$-th element $\left\| \vecteur{x}^{(i)} - \vecteur{x}^{(i')} \right\|^4$,  
  \item $\matrice{\tilde{D}}^{(1)}$ is the $n \times n$ matrix with null diagonal and $(i,i')$-th element ($i\neq i'$) given by 
  $$\| \bs{x}^{(i)} - \bs{x}^{(i')} \|^{2} \left\{-\log \left( \| \bs{x}^{(i)} - \bs{x}^{(i')} \| \right) - \gamma + \frac{1}{2} \right\} ,$$ where $\gamma$ is Euler's constant,
  \item $\matrice{R}$ is a differentiable function from $(0,+\infty)$ 
  to the space of real $n \times n$ matrices that satisfies
  $\| \bs{R}(\theta) \| =  O(\theta^{-4})$ and $\| \frac{d}{d \theta} \bs{R}(\theta) \| =  O(\theta^{-5})$ when $\theta \to +\infty$.
  \end{itemize}
  
  The rest is similar to the proof of Proposition \ref{Prop:tail_rates_affine_kriging}. Using the notations of Appendix \ref{App:REFERENCE_POSTERIOR_PROPER}, we have $\nouveauD:=-2\matrice{\tilde{D}}^{(1)}$, $\nouveauD^\star:=\matrice{D}^{(2)}$, $g(\theta) := \theta^{-2}$, $g^\star(\theta):= \theta^{-2l} \log(\theta)$ with $l=1$.
  The relevant part of Appendix \ref{App:REFERENCE_POSTERIOR_PROPER} 
  is the part concerning Matérn kernels with integer smoothness: 
  Appendix \ref{App:Matern_integer_soothness}. 
  As $\nu=1$, we are in the case where either for all $\lambda \neq 0$, 
  $\nouveauD \neq \lambda \matrice{D}^{(\nu)}$ or 
  for all $\lambda^\star \neq 0$, 
  $\nouveauD^\star \neq \lambda^\star \matrice{\tilde{D}}^{(\nu)}$. 
  Here, both checks hold. 
  Further, for the same reason as in the proof of Propostion 
  \ref{Prop:tail_rates_affine_kriging}, we are in the subcase where 
  $\matrice{W} \trans \nouveauD \matrice{W}$ is nonsingular. 
  In this subcase, Appendix \ref{App:Matern_integer_soothness} states 
  that the reference prior is 
  $O(\theta^{-2l-1} \log(\theta)) = O(\theta^{-3} \log(\theta))$ 
  and that it is proper. 
  Finally, $\matrice{W} \trans \nouveauD \matrice{W}$ being nonsingular
  also leads to the conclusion that, for every $\vecteur{y} \in \R^n$ 
  such that $\matrice{W} \trans \vecteur{y}$ is non-null,
  the likelihood function converges to a non-null constant 
  when $\theta$ goes to infinity.
\end{proof}

The most striking behavior of the reference prior density is given in Figure \ref{Sub:log-prior_Ordinary_Kriging_Matern_1} (Ordinary Kriging), 
where the tail rate seems to be $\theta^{-1} \log(\theta)^{-1}$ multiplied by some constant factor. It decreases a little faster than the upper bound given in Proposition \ref{Prop:ref_prior_behavior}, $O(\theta^{-1})$, but still not fast enough to make the reference prior proper. Proposition \ref{Prop:tail_rates_ordinary_kriging_Matern_1} below shows that $O(\theta^{-1} \log(\theta)^{-1})$ is indeed an upper bound for the reference prior.

\begin{prop} \label{Prop:tail_rates_ordinary_kriging_Matern_1}
  In the case of Ordinary Kriging, with a Matérn kernel with smoothness $\nu=1$, 
  if $n>r+3$, then:
  \begin{itemize}
    \item the reference prior $\pi(\theta)$ is 
    $O(\theta^{-1} \log(\theta)^{-1})$ when $\theta \to +\infty$;
    \item the matrix $\matrice{W} \trans \matrice{D}^{(1)} \matrice{W}$
  is singular;
    \item  for every vector $\vecteur{y} \in \R^n$ 
    such that $\matrice{W} \trans \vecteur{y}$
    does not belong to the vector subspace of $\R^{n-1}$ 
    spanned by the columns of 
    $\matrice{W} \trans \matrice{D}^{(1)} \matrice{W}$,
    the reference posterior distribution $\pi(\theta | \vecteur{y})$ 
    is $O(\theta^{-1} \log(\theta)^{-3/2})$ when $\theta \to +\infty$.
  \end{itemize}
\end{prop}

\begin{rmq}
  The upper bound given for the reference posterior tail rate just barely makes it proper. Yet the tail rate suggested by Figure \ref{Sub:likelihood_Ordinary_Kriging_Matern_1} for the likelihood function is consistent with it: $\log(\theta)^{-1/2}$ multiplied by some constant factor.
\end{rmq}

\begin{proof}
  The second assertion follows from Corollary \ref{Cor:D_singular}.
  Only the first and third assertions need to be proved. \medskip

  Let us use Lemma \ref{Lem:Matern_integer_nu_asymptotic_expansion} to obtain Equation \eqref{Eq:asymptotic_expansion_nu_1}. 
  \medskip

  With Ordinary Kriging, the matrix $\matrice{H}$ is the vector $\vecteur{1} \in \R^n$ whose entries are all equal to 1. Therefore $\matrice{W} \trans \matrice{D}^{(0)} \matrice{W}$ is the null $(n-1) \times (n-1)$ matrix and

  \begin{align}
    \matrice{W} \trans \co \matrice{W} = -2 \left( \frac{\log(\theta)}{\theta^{2}} \matrice{W}\ \trans \matrice{D}^{(1)} \matrice{W} + \frac{1}{\theta^2} \matrice{W} \trans \matrice{\tilde{D}}^{(1)} \matrice{W} \right)  \\
    + \frac{\log(\theta)}{\theta^4} \matrice{W} \trans \matrice{D}^{(2)} \matrice{W} + \matrice{W} \trans \matrice{R}(\theta) \matrice{W}. \nonumber  
  \end{align}
  
  Using the notations of Appendix \ref{App:REFERENCE_POSTERIOR_PROPER}, we have 
  $\nouveauD:=-2 \matrice{D}^{(1)}$ and $\nouveauD^\star:=-2\matrice{\tilde{D}}^{(1)}$, $g(\theta) = \theta^{-2} \log(\theta)$, $g^\star(\theta):= \log(\theta)^{-1}$. \medskip

  Corollary \ref{Cor:D_singular} states that the rank of $\nouveauD$ is lower or equal to $r+2$, so this is a fortiori true for the rank of $\matrice{W} \trans \nouveauD \matrice{W}$. Since $r+2<n-1$, this implies that $\matrice{W} \trans \nouveauD \matrice{W}$ is singular. \medskip

  The proof of Theorem \ref{THM:REFERENCE_POSTERIOR_PROPER}, 
  Appendix \ref{App:REFERENCE_POSTERIOR_PROPER}, 
  then yields both results of this Proposition. 
  The relevant part of Appendix \ref{App:REFERENCE_POSTERIOR_PROPER} is 
  the part concerning Matérn kernels with integer smoothness: 
  Appendix \ref{App:Matern_integer_soothness}. 
  As $\nu=1$, we are in the case where there exists $\lambda \neq 0$ 
  such that $\nouveauD = \lambda \matrice{D}^{(\nu)}$ ($\lambda=-2$) 
  and there exists $\lambda^\star \neq 0$ 
  such that  $\nouveauD^\star = \lambda^\star \matrice{\tilde{D}}^{(\nu)}$ 
  ($\lambda^\star = -2$). 
  Further, we are in the subcase where 
  $\matrice{W} \trans \nouveauD \matrice{W}$ is singular. 
  This subcase is dealt with in the last paragraph of 
  Appendix \ref{App:Matern_integer_soothness}:
  it yields the first assertion.
  \medskip

  Any vector $\vecteur{y} \in \R^n$ such that
  $\matrice{W} \trans \vecteur{y}$ does not belong to the subspace of $\R^{n-1}$
  spanned by the columns of $\matrice{W} \trans \matrice{D}^{(1)} \matrice{W}$
  verifies Assumption $\mathbb{\tilde{A}}_1(\vecteur{y})$
  from Appendix \ref{App:Precise_formulation}.
  Therefore Proposition \ref{Prop:likelihood_as_product_details}
  can be applied instead of 
  Proposition \ref{Prop:likelihood_as_product} and 
  the last paragraph of
  Appendix \ref{App:Matern_integer_soothness}
  yields the third assertion.
\end{proof}

\section{Auxiliary facts} \label{App:auxiliary_facts}

Throughout this appendix, the following notations are used. We write $[\![\cdot, \cdot]\!]$ to denote intervals of integers. For example, $[\![1,3]\!]$ is the set $\{1,2,3\}$.
We use the notation $\Ker$ to denote the kernel of a linear mapping 
or of a matrix.
The ``trivial vector space'' is the vector space containing only the null vector.

\subsection{Algebra}

\begin{lem} \label{Lem:switch_pov}
Let $a$ and $b$ be positive integers and let $\bs{\Sigma}$ be a nonsingular symmetric $(a+b) \times (a+b)$ matrix. Then, for any $(a+b) \times a$ matrix $\bs{A}$ with rank $a$ and any $(a+b) \times b$ matrix $\bs{B}$ with rank $b$ such that $\bs{A} \trans \bs{B}$ is the null $a \times b$ matrix,

\begin{equation}
\bs{B} \left( \bs{B} \trans \bs{\Sigma} \bs{B} \right)^{-1} \bs{B} \trans 
= \bs{\Sigma}^{-1} \left(\bs{I}_{a+b} - \bs{A} \left( \bs{A} \trans \bs{\Sigma}^{-1} \bs{A} \right)^{-1} \bs{A} \trans \bs{\Sigma}^{-1} \right).
\end{equation}
\end{lem}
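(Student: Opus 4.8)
The plan is to prove the identity by showing that the two sides, viewed as linear maps on $\R^{a+b}$, agree on a basis. The natural basis to use is furnished by the columns of $\bs{A}$ together with the columns of $\bs{\Sigma}\bs{B}$, and the whole argument then reduces to two short substitutions once that basis property is established.

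First I would check that $[\,\bs{A}\;\;\bs{\Sigma}\bs{B}\,]$ is a nonsingular $(a+b)\times(a+b)$ matrix, so that its columns really do form a basis. Suppose $\bs{A}\bs{u} + \bs{\Sigma}\bs{B}\bs{v} = \bs{0}$. Left-multiplying by $\bs{B}\trans$ and using $\bs{B}\trans\bs{A} = (\bs{A}\trans\bs{B})\trans = \bs{0}$ gives $\bs{B}\trans\bs{\Sigma}\bs{B}\,\bs{v} = \bs{0}$, hence $\bs{v} = \bs{0}$ because $\bs{B}\trans\bs{\Sigma}\bs{B}$ is invertible; then $\bs{A}\bs{u} = \bs{0}$ forces $\bs{u} = \bs{0}$ since $\bs{A}$ has full column rank $a$. (The invertibility of $\bs{B}\trans\bs{\Sigma}\bs{B}$ and of $\bs{A}\trans\bs{\Sigma}^{-1}\bs{A}$, implicit in the statement, is automatic in the application, where $\bs{\Sigma}$ is a positive definite correlation matrix and so are $\bs{\Sigma}$ and $\bs{\Sigma}^{-1}$ restricted to the relevant subspaces.)

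Next I would evaluate each side on these basis vectors. Writing $M_L$ for the left-hand side and $M_R$ for the right-hand side: because $\bs{B}\trans\bs{A} = \bs{0}$ we get at once $M_L \bs{A} = \bs{B}(\bs{B}\trans\bs{\Sigma}\bs{B})^{-1}\bs{B}\trans\bs{A} = \bs{0}$ and $M_L \bs{\Sigma}\bs{B} = \bs{B}(\bs{B}\trans\bs{\Sigma}\bs{B})^{-1}\bs{B}\trans\bs{\Sigma}\bs{B} = \bs{B}$. On the other side, $M_R \bs{A} = \bs{\Sigma}^{-1}\bigl(\bs{A} - \bs{A}(\bs{A}\trans\bs{\Sigma}^{-1}\bs{A})^{-1}\bs{A}\trans\bs{\Sigma}^{-1}\bs{A}\bigr) = \bs{0}$, and $M_R \bs{\Sigma}\bs{B} = \bs{B} - \bs{\Sigma}^{-1}\bs{A}(\bs{A}\trans\bs{\Sigma}^{-1}\bs{A})^{-1}\bs{A}\trans\bs{B} = \bs{B}$, again using $\bs{A}\trans\bs{B} = \bs{0}$.

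Since $M_L$ and $M_R$ coincide on the columns of $\bs{A}$ and of $\bs{\Sigma}\bs{B}$, i.e. on a basis of $\R^{a+b}$, they are equal; equivalently $(M_L - M_R)[\,\bs{A}\;\;\bs{\Sigma}\bs{B}\,] = \bs{0}$ together with the invertibility of $[\,\bs{A}\;\;\bs{\Sigma}\bs{B}\,]$ yields $M_L = M_R$. The only point requiring genuine care is the first step, verifying that $\bs{A}$ and $\bs{\Sigma}\bs{B}$ span the whole space, since this is exactly where the orthogonality hypothesis $\bs{A}\trans\bs{B} = \bs{0}$ and the nonsingularity of $\bs{\Sigma}$ are used; everything afterwards is a direct substitution.
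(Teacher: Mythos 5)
Your proof is correct and follows essentially the same route as the paper's: both arguments check that the two matrices annihilate the columns of $\bs{A}$ and send $\bs{\Sigma}\bs{B}\bs{z}$ to $\bs{B}\bs{z}$, then conclude by noting these two column spaces together span $\R^{a+b}$. If anything, your version is slightly tidier, since you explicitly verify that $[\,\bs{A}\;\;\bs{\Sigma}\bs{B}\,]$ is nonsingular rather than characterizing the common kernel by a dimension count as the paper does.
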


Lemma \ref{Lem:switch_pov} is used in the proofs of Propositions \ref{Prop:rep_prior} and \ref{Prop:marginal_likelihood}.

\begin{proof}
  This is a simple reformulation of Lemma 6 from \citet{RSH12}.
\end{proof}

\begin{lem} \label{Lem:sd_eigenvalues}
Let $m$ be a positive integer, $\bs{\Sigma}$ be a nonsingular $m \times m$ matrix, and $\bs{A}$ and $\bs{B}$ be $m \times m$ matrices. If there exists a real number $t$ such that

\begin{equation}
\bs{A} = t \bs{\Sigma} + \bs{B},
\end{equation}

then

\begin{equation}
\Tr \left[ \left\{ \bs{A} \bs{\Sigma}^{-1} \right\}^2 \right] - \frac{1}{m} \left[ \Tr \left\{ \bs{A} \bs{\Sigma}^{-1} \right\} \right]^2
=
\Tr \left[ \left\{ \bs{B} \bs{\Sigma}^{-1} \right\}^2 \right] - \frac{1}{m} \left[ \Tr \left\{ \bs{B} \bs{\Sigma}^{-1} \right\} \right]^2.
\end{equation}

\end{lem}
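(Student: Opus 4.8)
The plan is to reduce everything to a single matrix by absorbing $\bs{\Sigma}^{-1}$ into each product. Set $\bs{X} := \bs{A}\bs{\Sigma}^{-1}$ and $\bs{Y} := \bs{B}\bs{\Sigma}^{-1}$. Right-multiplying the hypothesis $\bs{A} = t\bs{\Sigma} + \bs{B}$ by $\bs{\Sigma}^{-1}$ gives
\[
\bs{X} = t\,\bs{I}_m + \bs{Y},
\]
so the two matrices whose ``trace functional'' we must compare differ only by a scalar multiple of the identity. Writing the common functional as
\[
\phi(\bs{M}) := \Tr\!\left( \bs{M}^2 \right) - \frac{1}{m}\left( \Tr \bs{M} \right)^2,
\]
the lemma amounts to showing that $\phi$ is invariant under the shift $\bs{M} \mapsto \bs{M} + t\,\bs{I}_m$, i.e.\ that $\phi(\bs{X}) = \phi(\bs{Y})$.

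The conceptual reason this holds is that $\phi(\bs{M})$ equals (up to the factor $m$) the empirical variance of the eigenvalues of $\bs{M}$, and adding $t\,\bs{I}_m$ merely translates every eigenvalue by $t$, leaving their variance unchanged. Rather than invoke eigenvalues directly — which may be complex, since $\bs{M}$ need not be symmetric — I would establish the shift-invariance by an elementary trace computation. Using linearity of the trace and $\Tr(\bs{I}_m) = m$, one expands
\[
\Tr\!\left( (t\,\bs{I}_m + \bs{Y})^2 \right) = t^2 m + 2t\,\Tr \bs{Y} + \Tr\!\left( \bs{Y}^2 \right), \qquad
\left( \Tr( t\,\bs{I}_m + \bs{Y} ) \right)^2 = t^2 m^2 + 2tm\,\Tr \bs{Y} + \left( \Tr \bs{Y} \right)^2.
\]
Subtracting $1/m$ times the second expression from the first, the terms $t^2 m$ and $2t\,\Tr \bs{Y}$ cancel exactly, leaving $\phi(\bs{X}) = \Tr(\bs{Y}^2) - \frac{1}{m}(\Tr \bs{Y})^2 = \phi(\bs{Y})$, which is the claimed identity.

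There is no genuine obstacle here: the argument is a one-line substitution followed by an elementary expansion, and the only step requiring a moment's attention is the initial right-multiplication by $\bs{\Sigma}^{-1}$, which is precisely what converts the additive perturbation $\bs{B} \mapsto t\bs{\Sigma} + \bs{B}$ into the scalar shift $\bs{Y} \mapsto t\,\bs{I}_m + \bs{Y}$ that $\phi$ ignores. This is exactly the structural feature exploited in the proof of Theorem~\ref{Thm:reference_posterior_proper}, where the dominant term $\frac{g'(\theta)}{g(\theta)}\,\bs{W}\trans\bs{\Sigma}_\theta\bs{W}$ of the derivative in Equation~(\ref{Eq:correlation_derivative_decomposition}) can be dropped without altering $\pi(\theta)$, since it is proportional to $\bs{W}\trans\bs{\Sigma}_\theta\bs{W}$ itself and hence plays the role of $t\bs{\Sigma}$ above.
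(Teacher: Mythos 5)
Your proof is correct and is essentially the same direct calculation as the paper's: both expand $\Tr(\bs{A}\bs{\Sigma}^{-1}) = \Tr(\bs{B}\bs{\Sigma}^{-1}) + tm$ and $\Tr[(\bs{A}\bs{\Sigma}^{-1})^2] = \Tr[(\bs{B}\bs{\Sigma}^{-1})^2] + 2t\Tr(\bs{B}\bs{\Sigma}^{-1}) + t^2 m$ and observe the cancellation. Your framing of the identity as shift-invariance of a variance-like functional under $\bs{M} \mapsto \bs{M} + t\bs{I}_m$ is a pleasant way to phrase the same computation, but it is not a different argument.
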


Lemma \ref{Lem:sd_eigenvalues} is used in the proof of Theorem \ref{THM:REFERENCE_POSTERIOR_PROPER}.

\begin{proof}
The lemma follows from a direct calculation:

\begin{align}
\Tr \left[ \bs{A} \bs{\Sigma}^{-1} \right] 
&= \Tr \left[ \bs{B} \bs{\Sigma}^{-1} \right] + tm. \\
\Tr \left[ \left\{ \bs{A} \bs{\Sigma}^{-1} \right\}^2 \right] 
&= \Tr \left[ \left\{ \bs{B} \bs{\Sigma}^{-1} \right\}^2 \right] 
+ 2t \Tr \left[ \bs{B} \bs{\Sigma}^{-1} \right] + t^2 m.
\end{align}
\end{proof}

\begin{lem} \label{Lem:ref_prior_majoration}
Let $m>a$ be positive integers, $\bs{\Sigma}$ be an $m \times m$ symmetric positive definite matrix, $\bs{\Sigma}'$ be an $m \times m$ symmetric matrix and $\bs{A}$ be an $m \times a$ matrix with rank $a$. Denote $\bs{Q} := \bs{I}_m - \bs{A} \left( \bs{A} \trans \bs{\Sigma}^{-1} \bs{A} \right)^{-1} \bs{A} \trans \bs{\Sigma}^{-1}$. Then, if there exist $t_1 \in \R$ and $t_2 \in [0,+\infty)$ such that the matrix $\bs{F} := t_1 \bs{\Sigma} - \bs{\Sigma}'$ is positive semi-definite and satisfies $\forall \bs{\xi} \in \R^m$ $\bs{\xi} \trans \bs{F} \bs{\xi} \leqslant t_2 \bs{\xi} \trans \bs{\Sigma} \bs{\xi}$, then

\begin{equation}
\sqrt{ \Tr \left[ ( \bs{\Sigma}' \bs{\Sigma}^{-1} \bs{Q} )^2 \right] - \frac{1}{m-a} \left[ \Tr \left\{ \bs{\Sigma}' \bs{\Sigma}^{-1} \bs{Q} \right\} \right]^2 }
\leqslant (m-a) t_2.
\end{equation}

Moreover,

\begin{equation}
  \sqrt{ \Tr \left[ ( \bs{\Sigma}' \bs{\Sigma}^{-1} )^2 \right] - \frac{1}{m} \left[ \Tr \left\{ \bs{\Sigma}' \bs{\Sigma}^{-1} \right\} \right]^2 }
  \leqslant m t_2.
\end{equation}

\end{lem}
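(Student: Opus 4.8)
The plan is to reduce the $m$-dimensional expression involving $\bs{Q}$ to an $(m-a)$-dimensional one on the orthogonal complement of the columns of $\bs{A}$, and then to recognise the quantity under the square root as a multiple of the empirical variance of a family of generalized eigenvalues that the hypotheses confine to an interval of length $t_2$.

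First I would introduce an $m \times (m-a)$ matrix $\bs{W}$ with $\bs{W}\trans\bs{W} = \bs{I}_{m-a}$ and $\bs{A}\trans\bs{W} = \bs{0}$ (such a $\bs{W}$ exists because $\bs{A}$ has rank $a$). Applying Lemma~\ref{Lem:switch_pov} with its ``$\bs{B}$'' taken to be $\bs{W}$ gives $\bs{\Sigma}^{-1}\bs{Q} = \bs{W}(\bs{W}\trans\bs{\Sigma}\bs{W})^{-1}\bs{W}\trans$. Writing $\bs{\Sigma}^{\bs{W}} := \bs{W}\trans\bs{\Sigma}\bs{W}$ and ${\bs{\Sigma}'}^{\bs{W}} := \bs{W}\trans\bs{\Sigma}'\bs{W}$ and using the cyclic property of the trace together with $\bs{W}\trans\bs{W} = \bs{I}_{m-a}$ --- exactly the computation already underlying the equivalence of the two formulas in Proposition~\ref{Prop:rep_prior} --- I obtain
\[
\Tr[\bs{\Sigma}'\bs{\Sigma}^{-1}\bs{Q}] = \Tr[\bs{M}], \qquad \Tr[(\bs{\Sigma}'\bs{\Sigma}^{-1}\bs{Q})^2] = \Tr[\bs{M}^2], \qquad \bs{M} := {\bs{\Sigma}'}^{\bs{W}}(\bs{\Sigma}^{\bs{W}})^{-1}.
\]
Thus the quantity under the square root equals $\Tr[\bs{M}^2] - \tfrac{1}{m-a}[\Tr\bs{M}]^2$, now genuinely an $(m-a)$-dimensional object.

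Next I would strip off the multiple of $\bs{\Sigma}^{\bs{W}}$. Since ${\bs{\Sigma}'}^{\bs{W}} = t_1\bs{\Sigma}^{\bs{W}} - \bs{F}^{\bs{W}}$ with $\bs{F}^{\bs{W}} := \bs{W}\trans\bs{F}\bs{W}$, Lemma~\ref{Lem:sd_eigenvalues} (applied in dimension $m-a$, with its ``$\bs{B}$'' equal to $-\bs{F}^{\bs{W}}$, the sign being immaterial as the expression is even in its matrix argument) yields
\[
\Tr[\bs{M}^2] - \tfrac{1}{m-a}[\Tr\bs{M}]^2 = \Tr[(\bs{F}^{\bs{W}}(\bs{\Sigma}^{\bs{W}})^{-1})^2] - \tfrac{1}{m-a}\big[\Tr(\bs{F}^{\bs{W}}(\bs{\Sigma}^{\bs{W}})^{-1})\big]^2 .
\]
The point of trading ${\bs{\Sigma}'}^{\bs{W}}$ for $\bs{F}^{\bs{W}}$ is that the two hypotheses on $\bs{F}$ say precisely $\bs{0} \preceq \bs{F} \preceq t_2\bs{\Sigma}$, which restricts to $\bs{0} \preceq \bs{F}^{\bs{W}} \preceq t_2\bs{\Sigma}^{\bs{W}}$. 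Because $\bs{F}^{\bs{W}}(\bs{\Sigma}^{\bs{W}})^{-1}$ is similar to the symmetric matrix $(\bs{\Sigma}^{\bs{W}})^{-1/2}\bs{F}^{\bs{W}}(\bs{\Sigma}^{\bs{W}})^{-1/2}$, its eigenvalues $\nu_1,\dots,\nu_{m-a}$ are real with real eigenvectors, and for each eigenpair the generalized Rayleigh quotient $\nu_i = (\bs{v}\trans\bs{F}^{\bs{W}}\bs{v})/(\bs{v}\trans\bs{\Sigma}^{\bs{W}}\bs{v})$ is forced into $[0,t_2]$.

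Finally, dropping the nonnegative term $\tfrac{1}{m-a}[\Tr(\cdots)]^2$ and using $\nu_i^2 \le t_2^2$ gives $\Tr[\bs{M}^2] - \tfrac{1}{m-a}[\Tr\bs{M}]^2 = \sum_{i}\nu_i^2 - \tfrac{1}{m-a}(\sum_i\nu_i)^2 \le (m-a)t_2^2$, whence the square root is at most $\sqrt{m-a}\,t_2 \le (m-a)t_2$ since $m-a \ge 1$. The main obstacle is really the first, bookkeeping step: one must verify cleanly that inserting $\bs{Q}$ in the $m$-dimensional trace is equivalent to restricting to the $(m-a)$-dimensional $\bs{W}$-picture, so that the normalising constant $\tfrac{1}{m-a}$ matches the number of eigenvalues actually present; the remaining localisation of the $\nu_i$ is handed to us directly by the two hypotheses on $\bs{F}$. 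One could replace the appeal to Lemma~\ref{Lem:sd_eigenvalues} by observing that the expression is translation-invariant in the eigenvalues of $\bs{M}$ and shifting them by $t_1$, but the route above reuses machinery already established in the paper.
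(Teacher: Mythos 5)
Your proof is correct and follows essentially the same route as the paper's: the same reduction via Lemma \ref{Lem:switch_pov} to the $(m-a)$-dimensional picture, the same substitution of $\bs{F}$ for $\bs{\Sigma}'$ via Lemma \ref{Lem:sd_eigenvalues}, and the same localization of the (generalized) eigenvalues in $[0,t_2]$ via Rayleigh quotients. The only, harmless, divergence is the final inequality: the paper first bounds $\Tr \left[ (\bs{F} \bs{\Sigma}^{-1} \bs{Q})^2 \right]$ by $\Tr \left[ \bs{F} \bs{\Sigma}^{-1} \bs{Q} \right]^2$ using positive semi-definiteness and then bounds the trace by $(m-a)t_2$, whereas you bound $\sum_i \nu_i^2 \leqslant (m-a)t_2^2$ directly, which even yields the marginally sharper constant $\sqrt{m-a}\, t_2$.
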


Lemma \ref{Lem:ref_prior_majoration} is used in the proof of Proposition \ref{Prop:ref_prior_behavior}.
\begin{proof}

Let $\bs{B}$ be an $m \times (m-a)$ matrix with rank $m-a$ such that $\bs{A} \trans \bs{B}$ is the null $a \times (m-a)$ matrix.  \medskip

We only prove the first assertion.
The proof of the second assertion is identical,
except that $a$ must be replaced by 0
and $\matrice{Q}$ and $\matrice{B}$
must both be replaced by $\matrice{I}_m$. \medskip

By applying Lemma \ref{Lem:switch_pov}, we obtain that $\bs{\Sigma}^{-1} \bs{Q} = \bs{B} \left( \bs{B} \trans \bs{\Sigma} \bs{B} \right)^{-1} \bs{B} \trans $. \medskip

Because of the properties of the trace, this implies

\begin{align}
\Tr \left[ \bs{\Sigma}' \bs{\Sigma}^{-1} \bs{Q} \right] &= \Tr \left[  \bs{B} \trans \bs{\Sigma}'  \bs{B} \left( \bs{B} \trans \bs{\Sigma} \bs{B} \right)^{-1} \right] \\
\Tr \left[ (\bs{\Sigma}' \bs{\Sigma}^{-1} \bs{Q} )^2 \right] &= \Tr \left[ \left\{ \bs{B} \trans \bs{\Sigma}'  \bs{B} \left( \bs{B} \trans \bs{\Sigma} \bs{B} \right)^{-1} \right\}^2 \right].
\end{align}

Similarly, we have

\begin{align}
\Tr \left[ \bs{F} \bs{\Sigma}^{-1} \bs{Q} \right] &= \Tr \left[  \bs{B} \trans \bs{F}  \bs{B} \left( \bs{B} \trans \bs{\Sigma} \bs{B} \right)^{-1} \right] \\
\Tr \left[ (\bs{F} \bs{\Sigma}^{-1} \bs{Q} )^2 \right] &= \Tr \left[ \left\{ \bs{B} \trans \bs{F}  \bs{B} \left( \bs{B} \trans \bs{\Sigma} \bs{B} \right)^{-1} \right\}^2 \right].
\end{align}

Because $\bs{B} \trans \bs{F} \bs{B} = t_1 \bs{B} \trans \bs{\Sigma} \bs{B} - \bs{B} \trans \bs{\Sigma}' \bs{B}$, Lemma \ref{Lem:sd_eigenvalues} implies
\begin{equation}
\begin{split}
& \Tr \left[ \left\{ \bs{B} \trans \bs{\Sigma}'  \bs{B} \left( \bs{B} \trans \bs{\Sigma} \bs{B} \right)^{-1} \right\}^2 \right] - \frac{1}{m-a} \left[\Tr \left\{  \bs{B} \trans \bs{\Sigma}'  \bs{B} \left( \bs{B} \trans \bs{\Sigma} \bs{B} \right)^{-1} \right\} \right]^2 \\
=& \Tr \left[ \left\{ \bs{B} \trans \bs{F}  \bs{B} \left( \bs{B} \trans \bs{\Sigma} \bs{B} \right)^{-1} \right\}^2 \right] - \frac{1}{m-a} \left[ \Tr \left\{  \bs{B} \trans \bs{F}  \bs{B} \left( \bs{B} \trans \bs{\Sigma} \bs{B} \right)^{-1} \right\} \right]^2.
\end{split}
\end{equation}

Combining the five equations above yields

\begin{align} \label{Eq:substitution}
&\Tr \left[ (\bs{\Sigma}' \bs{\Sigma}^{-1} \bs{Q} )^2 \right] - \frac{1}{m-a} \left[ \Tr \left\{ \bs{\Sigma}' \bs{\Sigma}^{-1} \bs{Q} \right\} \right]^2 \nonumber \\
=& \Tr \left[\left(\bs{F} \bs{\Sigma}^{-1} \bs{Q} \right)^2\right] - \frac{1}{m-a} \left[ \Tr \left\{ \bs{F} \bs{\Sigma}^{-1} \bs{Q} \right\} \right]^2.
\end{align}

An elementary computation shows that $\bs{\Sigma}^{-1} \bs{Q} =  \bs{Q} \trans \bs{\Sigma}^{-1} \bs{Q}$.
Consider the Cholesky decomposition $ \bs{\Sigma} =: \bs{L} \bs{L} \trans$. Then $\bs{\Sigma}^{-1} \bs{Q} = \bs{Q} \trans \bs{\Sigma}^{-1} \bs{Q} = \bs{Q} \trans \left(\bs{L}^{-1}\right) \trans \bs{L}^{-1} \bs{Q}$.

\begin{align} 
\Tr \left[\left(\bs{F} \bs{\Sigma}^{-1} \bs{Q} \right)^2\right]
&= \Tr \left[ \left( \bs{F} \bs{Q} \trans \left(\bs{L}^{-1}\right) \trans \bs{L}^{-1} \bs{Q} \right)^2 \right]
= \Tr \left[ \left( \bs{L}^{-1} \bs{Q} \bs{F} \bs{Q} \trans \left(\bs{L}^{-1}\right) \trans \right)^2 \right] \nonumber \\
&\leqslant \left[ \Tr \left\{ \bs{L}^{-1} \bs{Q} \bs{F} \bs{Q} \trans \left(\bs{L}^{-1}\right) \trans \right\} \right]^2
= \left[ \Tr \left\{ \bs{F} \bs{\Sigma}^{-1} \bs{Q} \right\} \right]^2. \label{Eq:majoration_trace}
\end{align} 

The inequality holds because $ \bs{L}^{-1} \bs{Q} \bs{F} \bs{Q} \trans \left(\bs{L}^{-1}\right) \trans $ is a symmetric positive semi-definite matrix. \medskip

Let $(\bs{\xi}_i)_{1 \leqslant i \leqslant m}$ be a basis of unit eigenvectors of $\bs{\Sigma}^{-1} \bs{Q}$ such that for every integer $ i \in [\![1,m]\!] \setminus [\![1,m-a]\!]$, $\bs{\xi}_i$ belongs to the kernel of $\bs{\Sigma}^{-1} \bs{Q}$. Indeed, because $\bs{\Sigma}^{-1} \bs{Q} = \bs{B} \left( \bs{B} \trans \bs{\Sigma} \bs{B} \right)^{-1} \bs{B} \trans $, this kernel has the same dimension as the kernel of $\bs{B} \trans$: $a$.

Denoting by $(s_i)_{1 \leqslant i \leqslant m}$ the family of the eigenvalues corresponding to the family of eigenvectors  $(\bs{\xi}_i)_{1 \leqslant i \leqslant m}$, we have for every integer $i \in [\![1,m-a]\!]$ $s_i\neq0$ and

\begin{align}
(\bs{\xi}_i) \trans \bs{\Sigma} \bs{\xi}_i &= s_i^{-2} \left\{ (\bs{\xi}_i) \trans \bs{Q} \trans \bs{\Sigma}^{-1} \right\} \bs{\Sigma} \left\{ \bs{\Sigma}^{-1} \bs{Q} \bs{\xi}_i \right\} \nonumber \\
&= s_i^{-2} (\bs{\xi}_i) \trans \bs{Q} \trans \bs{\Sigma}^{-1} \bs{Q} \bs{\xi}_i \nonumber \\
&= s_i^{-2} (\bs{\xi}_i) \trans \bs{\Sigma}^{-1} \bs{Q} \bs{\xi}_i \nonumber \\
&= s_i^{-1}.
\end{align}

This implies the third equality below:

\begin{align}
\Tr \left[ \bs{F} \bs{\Sigma}^{-1} \bs{Q} \right]
&= \sum_{i=1}^m \left(\bs{\xi}_i\right) \trans \bs{F} \bs{\Sigma}^{-1} \bs{Q} \bs{\xi}_i \nonumber \\
&= \sum_{i=1}^{m-a} s_i \left(\bs{\xi}_i\right) \trans  \bs{F} \bs{\xi}_i \nonumber \\
&= \sum_{i=1}^{m-a} \frac{\left(\bs{\xi}_i\right) \trans \bs{F} \bs{\xi}_i} {\left(\bs{\xi}_i\right) \trans \bs{\Sigma} \bs{\xi}_i} \nonumber \\
&\leqslant (m-a) t_2.
\end{align} 

Equations (\ref{Eq:substitution}) and (\ref{Eq:majoration_trace}) yield the result.

\end{proof}

\begin{lem} \label{Lem:first_trivial_intersection}
Let $(\bs{D}_k)_{k \in \N}$ be a sequence of matrices of the same size. If $\sum_{k \in \N} \bs{D}_k$ exists and its kernel is the trivial vector space, then there exists a nonnegative integer $N$ such that $\cap_{k=0}^N \Ker \bs{D}_k$ is the trivial vector space.
\end{lem}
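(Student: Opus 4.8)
The plan is to show that the full infinite intersection $\bigcap_{k \in \N} \Ker \bs{D}_k$ is already trivial, and then to exploit finite-dimensionality: a nested decreasing family of subspaces of a fixed finite-dimensional space must stabilize after finitely many steps, so the infinite intersection is in fact attained by some finite one.

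First I would set $\bs{D} := \sum_{k \in \N} \bs{D}_k$, which exists by hypothesis. If a vector $\bs{x}$ lies in $\Ker \bs{D}_k$ for every $k \in \N$, then every partial sum $\sum_{k=0}^N \bs{D}_k \bs{x}$ vanishes, hence so does the limit $\bs{D} \bs{x}$; this uses only that the matrix--vector product is continuous with respect to the convergent series. Therefore $\bigcap_{k \in \N} \Ker \bs{D}_k \subseteq \Ker \bs{D} = \{\bs{0}\}$, and the infinite intersection is trivial. Note this works whether the matrices are square or rectangular, since all kernels are subspaces of the same domain space.

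Next I would introduce the nested decreasing family $V_N := \bigcap_{k=0}^N \Ker \bs{D}_k$, so that $V_0 \supseteq V_1 \supseteq \cdots$ and $\bigcap_{N \in \N} V_N = \bigcap_{k \in \N} \Ker \bs{D}_k = \{\bs{0}\}$. Since each $V_N$ is a subspace of a fixed finite-dimensional space, the integer sequence $\dim V_N$ is non-increasing and bounded below by $0$, hence eventually constant: there is some $N$ with $\dim V_{N'} = \dim V_N$ for all $N' \geqslant N$. Because the family is nested, equality of dimensions forces $V_{N'} = V_N$ for all $N' \geqslant N$, so the whole intersection collapses to $V_N$. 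Thus $\{\bs{0}\} = \bigcap_{N' \in \N} V_{N'} = V_N = \bigcap_{k=0}^N \Ker \bs{D}_k$, which is exactly the claimed conclusion.

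The argument is essentially routine finite-dimensional linear algebra, and the only point demanding a little care is the passage from the infinite to a finite intersection, i.e. the stabilization of a decreasing chain of subspaces in finite dimension; the containment $\bigcap_k \Ker \bs{D}_k \subseteq \Ker \bs{D}$ and the triviality of that kernel are immediate from the hypotheses.
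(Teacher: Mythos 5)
Your proof is correct. The overall skeleton matches the paper's: both arguments rest on the containment $\bigcap_{k \in \N} \Ker \bs{D}_k \subseteq \Ker \left( \sum_{k \in \N} \bs{D}_k \right) = \{\bs{0}\}$ (which the paper leaves implicit in the line ``so $\bs{v}$ can only be the null vector'' and which you spell out via continuity of the matrix--vector product along the partial sums), followed by a finite-dimensionality argument to replace the infinite intersection by a finite one. Where you genuinely diverge is in that second step. The paper argues by contradiction and compactness: if every finite intersection were nontrivial, one could pick unit vectors $\bs{v}_n \in \bigcap_{k=0}^{n} \Ker \bs{D}_k$, extract a convergent subsequence on the compact unit sphere, and use closedness of the kernels to land a unit vector in the infinite intersection. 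You instead observe directly that the nested chain $V_N = \bigcap_{k=0}^{N} \Ker \bs{D}_k$ has non-increasing integer dimensions, hence stabilizes, and that nested subspaces of equal dimension coincide, so the infinite intersection is attained at some finite $N$. Your route is the more elementary and constructive of the two --- it needs no topology beyond what is already used to make sense of the convergent series --- while the paper's compactness argument is the one that would survive in settings where the decreasing family is of closed sets rather than linear subspaces. For the lemma as stated, your version is arguably cleaner.
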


Lemma \ref{Lem:first_trivial_intersection} is used in the proof of Lemma \ref{Lem:upper_bound_yfactor_in_likelihood} and thus contributes to the proof of Theorem \ref{THM:REFERENCE_POSTERIOR_PROPER}.

\begin{proof}
Assume the sum $\sum_{k \in \N} \bs{D}_k$ exists and its kernel is the trivial vector space. Consider the sequence $(d(n))_{n \in \N}$ where for every nonnegative integer $n$, $d(n)$ is the dimension of $\cap_{k=0}^n \Ker \bs{D}^{(k)}$. $(d(n))_{n \in \N}$ is a nonincreasing sequence of nonegative integers, so it is convergent. If its limit is strictly greater than 0, then for every nonnegative integer $n$, there exists a unit vector $\bs{v}_n$ that belongs to $\cap_{k=0}^n \Ker \bs{D}^{(k)}$. Because the unit sphere is compact, there exists an increasing mapping $\phi: \N \to \N$ such that the subsequence $(\bs{v}_{\phi(n)})_{n \in \N}$ converges to a limit $\bs{v}$ such that $\|\bs{v}\|=1$. Besides, for every pair of nonnegative integers $n \leqslant n'$, $\bs{v}_{\phi(n')} \in \cap_{k=0}^{\phi(n)} \Ker \bs{D}^{(k)}$. Given this set is closed, the limit $\bs{v}$ also belongs to $\cap_{k=0}^{\phi(n)} \Ker \bs{D}^{(k)}$. So for every nonnegative integer $k$, $\bs{v} \in \Ker \bs{D}^{(k)}$ and therefore $\bs{v} \in \cap_{k=0}^\infty \Ker \bs{D}^{(k)}$. So $\bs{v}$ can only be the null vector, which is absurd since $\|\bs{v}\|=1$.
We deduce from this contradiction that the limit of the sequence of integers $(d(n))_{n \in \N}$ is 0. Therefore there exists a nonnegative integer $N$ such that $d(N)=0$.
\end{proof}

\subsection{Maclaurin series} \label{App:Maclaurin_series}
The lemmas in this subsection deal with the following setting. \medskip

Let $m$ be a positive integer and let $\bs{M}$ be a continuous mapping from $\R$ to $\mathcal{M}_m$, the set of $m \times m$ matrices. Assume $\bs{M}$ admits the following Maclaurin series:
\begin{equation} \label{Eq:Maclaurin}
\bs{M}(t) = \sum_{k=0}^N a_k(t)  \bs{A}_k + \bs{B}(t).
\end{equation}
In the expression above, $N$ is a nonnegative integer and for every $k \in [\![0,N]\!]$:
\begin{enumerate}
\item $a_k$ is a continuous mapping $(0,+\infty) \to \R$ such that 
for all $t \in (0,+\infty)$, $a_k(t) \neq 0$;
\item for every nonnegative integer $l<k$, $a_k(t) = o(|a_l(t)|)$ when $t \to 0$;
\item $\bs{A}_k$ is a non-null symmetric $m \times m$ matrix.
\end{enumerate}
$\bs{B}$ is a continuous mapping $(0,+\infty) \rightarrow \mathcal{M}_m$ such that for every $t \in \R$, $\bs{B}(t)$ is a symmetric matrix and when $t \to 0$, $\| \bs{B}(t) \| = o(|a_N(t)|)$.

\begin{lem} \label{Lem:majoration_inverse_Maclaurin}
Consider \eqref{Eq:Maclaurin}. If $\cap_{k=0}^N \Ker \bs{A}_k$ is 
the trivial vector space and if there exists $T>0$ such that 
for all $t \in (0,T)$ $\bs{M}(t)$ is nonsingular, then when $t \to 0$, $\left\| \bs{M}(t)^{-1} \right\| = O\left(|a_N(t)|^{-1}\right)$.
\end{lem}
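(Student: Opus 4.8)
The plan is to reduce the Frobenius-norm estimate to a lower bound on the smallest singular value of $\bs{M}(t)$. Since the Frobenius and operator norms of $\bs{M}(t)^{-1}$ differ only by a factor depending on $m$, and since the smallest singular value of $\bs{M}(t)$ equals $\min_{\|\bs{\xi}\|=1}\|\bs{M}(t)\bs{\xi}\|$, it suffices to exhibit a constant $c>0$ and a threshold $T'\in(0,T]$ with
\[
\|\bs{M}(t)\bs{\xi}\| \geqslant c\,|a_N(t)| \quad \text{for every unit vector }\bs{\xi}\text{ and every }0<|t|<T'.
\]
This at once yields $\|\bs{M}(t)^{-1}\| = O(|a_N(t)|^{-1})$ as $t\to 0$.

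I would prove this lower bound by contradiction and compactness, in the spirit of Lemma \ref{Lem:first_trivial_intersection}. Assuming it fails, there are $t_n\to 0$ (with $t_n\neq 0$) and unit vectors $\bs{\xi}_n$ with $\|\bs{M}(t_n)\bs{\xi}_n\| = o(|a_N(t_n)|)$; passing to a subsequence, $\bs{\xi}_n\to\bs{\xi}$ with $\|\bs{\xi}\|=1$. The target is to show $\bs{\xi}\in\bigcap_{k=0}^N\Ker\bs{A}_k$, which contradicts the triviality hypothesis. Dividing $\bs{M}(t_n)\bs{\xi}_n = \sum_k a_k(t_n)\bs{A}_k\bs{\xi}_n + \bs{B}(t_n)\bs{\xi}_n$ by $a_N(t_n)$ and using $\|\bs{B}(t_n)\| = o(|a_N(t_n)|)$ recasts the hypothesis as $\sum_{k=0}^N r_k(t_n)\bs{A}_k\bs{\xi}_n \to 0$, where the rescaled coefficients $r_k := a_k/a_N$ obey the strict hierarchy $|r_0|\gg\cdots\gg|r_{N-1}|\gg|r_N|=1$.

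From here I would peel off the scales from the top down, using the filtration $E_j := \bigcap_{k=0}^j\Ker\bs{A}_k$ to organize an induction. Dividing by the dominant $r_0(t_n)$ gives $\bs{A}_0\bs{\xi}_n\to 0$, hence $\bs{A}_0\bs{\xi}=0$; the aim is to promote this successively to $\bs{A}_1\bs{\xi}=0,\ldots,\bs{A}_N\bs{\xi}=0$. The decisive and delicate step, and the one I expect to be the main obstacle, is passing from level $j$ to level $j+1$: once the term $a_j\bs{A}_j\bs{\xi}_n$ is removed it reappears multiplied by the large factor $r_j$, so it cannot simply be discarded. To handle it I would decompose $\bs{\xi}_n$ along $E_{j-1}$ and its orthogonal complement and exploit the bound $\|\bs{A}_l\bs{p}\|\geqslant c_l\|\bs{p}\|$ on $(\Ker\bs{A}_l)^\perp$ — the smallest nonzero singular value of the symmetric $\bs{A}_l$ — to quantify the rate at which $\bs{\xi}_n$ approaches $E_j$, then feed these rates back into the projection of $\sum_k r_k\bs{A}_k\bs{\xi}_n\to 0$ onto $E_j$ so as to isolate the next scale $r_{j+1}$. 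This rate bookkeeping across the widely separated scales — checking that every cross term generated at a given level is genuinely negligible against the scale being peeled — is where essentially all the difficulty lies; the interference between a large factor $r_j$ and a tiny residual $\bs{A}_j\bs{\xi}_n$ is precisely what threatens the estimate when the leading matrix $\bs{A}_0$ is strongly singular, so this is the step I would scrutinize most carefully.
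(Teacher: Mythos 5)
Your opening reduction (pass to the smallest singular value, argue by compactness and contradiction) is legitimate and is genuinely different from the paper, which never extracts convergent subsequences but instead inverts $\bs{M}(t)$ explicitly through a recursive Schur-complement construction organized along the flag $\bigcap_{k\leqslant j}\Ker\bs{A}_k$. The problem is the step you yourself single out as the main obstacle: it is not merely delicate bookkeeping, it is a genuine obstruction, and the promotion from $\bs{A}_j\bs{\xi}=0$ to $\bs{A}_{j+1}\bs{\xi}=0$ cannot be carried out from the stated hypotheses. Take $m=2$, $N=1$, $a_0(t)=1$, $a_1(t)=t$ and
\[
\bs{A}_0=\begin{pmatrix}1&0\\0&0\end{pmatrix},\qquad
\bs{A}_1=\begin{pmatrix}0&1\\1&0\end{pmatrix},\qquad
\bs{B}(t)=\begin{pmatrix}0&0\\0&|t|^{3/2}\end{pmatrix}.
\]
All the conditions of \eqref{Eq:Maclaurin} hold, $\Ker\bs{A}_0\cap\Ker\bs{A}_1=\{\bs{0}\}$, and $\bs{M}(t)$ is nonsingular (even positive definite for $t>0$) for $0<|t|<1$ — which is how the nonsingularity hypothesis must be read, since in every application of the lemma $\bs{M}(0)$ is singular. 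Yet for $\bs{\xi}_t\propto(-t,1)\trans$ one gets $\bs{M}(t)\bs{\xi}_t\propto(0,|t|^{3/2}-t^2)\trans$, so the smallest singular value is $O(|t|^{3/2})=o(|a_1(t)|)$ and $\left\|\bs{M}(t)^{-1}\right\|$ grows like $|t|^{-3/2}$, not $O(|t|^{-1})$. In your scheme the limit vector is $\bs{\xi}=(0,1)\trans\in\Ker\bs{A}_0\setminus\Ker\bs{A}_1$: the large coefficient $r_0=a_0/a_1=1/t$ acting on the small residual $\bs{A}_0\bs{\xi}_t=O(t)$ produces a term of order $1$ that exactly cancels $r_1\bs{A}_1\bs{\xi}_t$. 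This is precisely the interference you feared, and no rate bookkeeping can remove it, because the conclusion itself fails.

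What the argument actually needs is an extra hypothesis. Since $\bs{A}_0$ is symmetric, $\bs{A}_0\bs{\xi}_n$ lies in $(\Ker\bs{A}_0)^{\perp}$, so projecting $\sum_k r_k\bs{A}_k\bs{\xi}_n\to 0$ orthogonally onto $\Ker\bs{A}_0$ annihilates the troublesome term exactly and yields, in the limit, $\bs{W}_N\trans\bs{A}_N\bs{W}_N\bs{v}=0$ for some $\bs{v}\neq\bs{0}$, where the columns of $\bs{W}_N$ span $\bigcap_{k<N}\Ker\bs{A}_k$ (this is the clean way to do your bookkeeping for $N=1$). To reach a contradiction you therefore need the \emph{compression} $\bs{W}_N\trans\bs{A}_N\bs{W}_N$ to be nonsingular; this is strictly stronger than triviality of $\bigcap_{k\leqslant N}\Ker\bs{A}_k$ (in the example above the compression is the $1\times1$ zero matrix even though the kernels intersect trivially), and it is exactly the property the paper's own proof invokes at the corresponding point when it asserts that $\bs{A}_N''$ is nonsingular ``otherwise $\cap_{k=0}^N\Ker\bs{A}_k$ would be nontrivial'' — an implication that does not hold in general. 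So your proposal cannot be completed from the hypotheses as stated; to repair it you must either add the compression-nonsingularity condition at each level of the flag (and check it in the applications, e.g.\ by enlarging $N$ until it holds) or otherwise restructure the expansion so that the scales do not interfere.
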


Lemma \ref{Lem:majoration_inverse_Maclaurin} and some elements of its proof below are used in the proof of Lemma \ref{Lem:minoration_yinversey_Maclaurin}, which itself is used in the proof of Lemma \ref{Lem:upper_bound_yfactor_in_likelihood} and thus contributes to the proof of Theorem \ref{THM:REFERENCE_POSTERIOR_PROPER}.
Lemma \ref{Lem:majoration_inverse_Maclaurin} is also used in the proof 
of Lemma \ref{Lem:developpement_asymptotique_defini_positif}, 
which is then used in the proof of 
Lemma \ref{Lem:majoration_derivee_RQ_SE} and thus contributes to 
the proof of Proposition \ref{Prop:ref_prior_behavior}.

\begin{proof}
Assume that $\cap_{k=0}^N \Ker \bs{A}_k$ is the trivial vector space and that there exists $T>0$ such that for all $t \in (0,T)$, $\bs{M}(t)$ is a nonsingular matrix.\medskip

If $N=0$, then $\bs{A}_0$ is nonsingular and the conclusion is trivial. \medskip

If $N \geqslant 1$, we may assume without loss of generality that $\cap_{k=0}^{N-1}  \Ker \bs{A}_k$ is a nontrivial vector space, otherwise we could replace $N$ by $N-1$ and $\bs{B}(t)$  by $\left\{ a_N(t) \bs{A}_N + \bs{B}(t) \right\}$ for all $t \in \R$. \medskip 

Let $d_N$ be the dimension of 
the orthogonal complement of
$\cap_{k=0}^{N-1}  \Ker \bs{A}_k$. Let $\bs{W}_N$ be an $m \times (m-d_N)$ matrix whose columns form an orthonormal basis of $\cap_{k=0}^{N-1}  \Ker \bs{A}_k$, and let $\bs{P}_N$ be an $m \times d_N$ matrix whose columns form an orthonormal basis of its orthogonal complement. 
Then $(\bs{P}_N, \bs{W}_N)$ is an orthogonal matrix: 
it is the $m \times m$ matrix whose left $m \times d_N$ block is 
$\matrice{P}_N$ and whose right $m \times (m-d_N)$ block is 
$\matrice{W}_N$. For all $t \in \R$, let us replace $\bs{M}(t)$ by 
$\tilde{\bs{M}}(t) := (\bs{P}_N, \bs{W}_N) \trans \bs{M}(t) (\bs{P}_N, \bs{W}_N)$. Because $(\bs{P}_N, \bs{W}_N)$ is an orthogonal matrix, the Frobenius norm of $\bs{M}(t)^{-1}$ is unchanged. Naturally, for all $k \in [\![0,N]\!]$, $\bs{A}_k$ is replaced by $\tilde{\bs{A}_k} := (\bs{P}_N, \bs{W}_N) \trans \bs{A}_k (\bs{P}_N, \bs{W}_N)$ and for every $t \in \R$, $\bs{B}(t)$ is replaced by $\tilde{\bs{B}}(t) := (\bs{P}_N, \bs{W}_N) \trans \bs{B}(t) (\bs{P}_N, \bs{W}_N)$. \medskip

Now, for every $k \in [\![1,N]\!]$, $\tilde{\bs{A}}_k$ 
can be decomposed into blocks -- a $d_N \times d_N$ block $\bs{A}_k'$, an $(m-d_N) \times (m-d_N)$ block $\bs{A}_k''$ and a $d_N \times (m-d_N)$ block $\bs{A}_k'''$:
\begin{equation}
\tilde{\bs{A}}_k = \begin{pmatrix} \bs{A}_k' & \bs{A}_k''' \\ 
(\bs{A}_k''') \trans & \bs{A}_k'' \end{pmatrix}.
\end{equation}

For all $t \in (0,+\infty)$, $\bs{B}(t)$ can be decomposed in a similar manner (here the $'$ notation is used to distinguish the blocks, not to express some derivative with respect to $t$):
\begin{equation}
\tilde{\bs{B}}(t) = \begin{pmatrix} \bs{B}(t)' & \bs{B}(t)''' \\ 
(\bs{B}(t)''') \trans & \bs{B}(t)'' \end{pmatrix}.
\end{equation}

Now, for any symmetric nonsingular matrix

\begin{equation} \label{Eq:blocks}
\bs{C} = \begin{pmatrix} \bs{C}' & \bs{C}''' \\ (\bs{C}''') \trans & 
\bs{C}'' \end{pmatrix},
\end{equation}

denoting by $\bs{S} := \left\{\bs{C}' - \bs{C}''' \left(\bs{C}''\right)^{-1} \left(\bs{C}'''\right) \trans \right\}$ the Schur complement of $\bs{C}''$, the inverse of $\bs{C}$ is 

\begin{equation} \label{Eq:inverse_Schur_complement}
\bs{C}^{-1} = \begin{pmatrix} \bs{I} & \bs{0} \\ -  \left(\bs{C}''\right)^{-1} \left(\bs{C}'''\right) \trans & \bs{I} \end{pmatrix}
\begin{pmatrix} \bs{S}^{-1} & \bs{0} \\ \bs{0} & \left(\bs{C}''\right)^{-1} \end{pmatrix}
\begin{pmatrix} \bs{I} & - \bs{C}''' \left(\bs{C}''\right)^{-1} \\ \bs{0} & \bs{I} \end{pmatrix}.
\end{equation}

See the section about Block Factorization in \citet{Ser02} (p. 138-139) for explanations about how Equation \eqref{Eq:inverse_Schur_complement} is obtained. \medskip

For every $k \in [\![0,N-1]\!]$, $\bs{A}_k''$ and $\bs{A}_k'''$ are null (note however that $\bs{A}_N''$ is nonsingular, otherwise $\cap_{k=0}^N \Ker \matrice{A}_k$ would be nontrivial). For all $t \in (0,T)$, $\tilde{\bs{M}}(t)$ is nonsingular. Its lower $(m-d_N) \times (m-d_N)$ block is $\left\{a_N(t) \bs{A}_N'' + \bs{B}(t)''\right\}$ and its Schur complement $\bs{S}_N(t)$ is

\begin{equation} \label{Eq:Schur_complement}
\begin{split}
\bs{S}_N(t) &:= \left\{ \sum_{k=0}^{N} a_k(t) \bs{A}_k' + \bs{B}(t)' \right\} - \\
&  \left\{ a_N(t) \bs{A}_N''' + \bs{B}(t)'''\right\} \left\{a_N(t) \bs{A}_N'' + \bs{B}(t)'' \right\}^{-1} \left\{a_N(t) \bs{A}_N''' + \bs{B}(t)''' \right\} \trans.
\end{split}
\end{equation}

Because we are dealing with the finite dimensional vector space of matrices of size $m \times m$, all norms are equivalent: for two norms $\| \cdot \|_1$ and $\| \cdot \|_2$, there exist positive constants $\kappa \leqslant \kappa'$ such that for any matrix $\matrice{X}$ of size $m \times m$, 

$$
\kappa \| \matrice{X} \|_1
\leqslant
\| \matrice{X} \|_2
\leqslant
\kappa' \| \matrice{X} \|_1.
$$

In particular, the Frobenius norm is equivalent to the algebra norm $$\bs{A} \mapsto \sup \left\{\sqrt{ \bs{\xi} \trans \bs{A} \trans \bs{A} \bs{\xi} / \bs{\xi} \trans \bs{\xi} } \; : \; \bs{\xi} \in \R^m \setminus \{\bs{0}_m\} \right\}.$$

So there exists a constant $C_m \in (0,+\infty)$ such that for every $t \in (0,T)$,

\begin{align} \label{Eq:bound_inverse_matrix}
\left\|\bs{M}(t)^{-1} \right\| \leqslant C_m \left( \|\bs{I}_m\| + \left\| \left\{ a_N(t) \bs{A}_N''' + \bs{B}(t)'''\right\} \left\{a_N(t) \bs{A}_N'' + \bs{B}(t)'' \right\}^{-1} \right\| \right)^2 \nonumber \\
\left( \left\|\bs{S}_N(t)^{-1} \right\| + \left\| \left\{a_N(t) \bs{A}_N'' + \bs{B}(t)'' \right\}^{-1} \right\| \right).
\end{align}

$\bs{A}_N''$ is nonsingular, otherwise $\cap_{k=0}^N \Ker \bs{A}_k$ would be nontrivial. This means that the norm of the matrix $ \left\{ a_N(t) \bs{A}_N''' + \bs{B}(t)'''\right\} \left\{a_N(t) \bs{A}_N'' + \bs{B}(t)'' \right\}^{-1}$ 
is bounded when $t \to 0$. 
Because of Equation \eqref{Eq:bound_inverse_matrix}, this implies that there exists $T_N>0$ and $\lambda_N>0$ such that for all $t \in (0,T_N)$, 

\begin{equation} \label{Eq:decrement}
\lambda_N \left\|\bs{M}(t)^{-1} \right\| \leqslant |a_N(t)|^{-1} + \left\| \bs{S}_N(t)^{-1} \right\|.
\end{equation}

Our goal is to use Equation (\ref{Eq:decrement}) recursively, by having $\bs{S}_N(t)$ play the part of $\bs{M}(t)$. To achieve this, a new expression of $\bs{S}_N(t)$ is required.

\begin{equation} \label{Eq:Maclaurin_lower_rank}
\bs{S}_N(t) = \sum_{k=0}^{N-1} a_k(t) \bs{A}_k' + \bs{B}_N(t),
\end{equation}

where

\begin{equation} \label{Eq:error_term}
\begin{split}
\bs{B}_N(t)&:= a_N(t) \bs{A}_N' + \bs{B}(t)'  - \\
& \left\{ a_N(t) \bs{A}_N''' + \bs{B}(t)'''\right\} \left\{a_N(t) \bs{A}_N'' + \bs{B}(t)'' \right\}^{-1} \left\{a_N(t) \bs{A}_N''' + \bs{B}(t)''' \right\} \trans.
\end{split}
\end{equation}

It turns out that when $t \to 0$, the norm of $\bs{B}_N(t)$ is 
$O(|a_N(t)|)$. This is due to the fact mentioned above that 
$\left\| \left\{ a_N(t) \bs{A}_N''' + \bs{B}(t)'''\right\} 
\left\{a_N(t) \bs{A}_N'' + \bs{B}(t)'' \right\}^{-1} \right\|$
is bounded when $t \to 0$. \medskip

 Furthermore, $\cap_{k=0}^{N-1} \Ker \bs{A}_k'$ is the trivial vector space.
 Indeed, let $\bs{v}_1 \in \cap_{k=0}^{N-1} \Ker \bs{A}_k'$.
 Then for any vector $\bs{v}_2 \in \R^{m-d_N}$,
 $(\bs{v}_1,\bs{v}_2) \trans \in \cap_{k=0}^{N-1} \Ker \tilde{\bs{A}}_k$.
 Independently from this, for any vector $\bs{v}_3 \in \R^{d_N}$, $(\bs{v}_3,\bs{0}_{m-d_N}) \trans$ belongs to the orthogonal complement of $ \cap_{k=0}^{N-1} \Ker \tilde{\bs{A}}_k$. So $(\bs{v}_1,\bs{0}_{m-d_N}) \trans$ belongs both to $\cap_{k=0}^{N-1} \Ker \tilde{\bs{A}}_k$ and its orthogonal complement: it is the null vector. Therefore $\bs{v}_1=\bs{0}_{d_N}$. \medskip
 
The two paragraphs above show that Equation (\ref{Eq:Maclaurin_lower_rank}) is formally similar to Equation (\ref{Eq:Maclaurin}): the role of $\bs{M}(t)$ is held by $\bs{S}_N(t)$, the role of $N$ by $N-1$, the role of the $\bs{A}_k$s by the $\bs{A}_k'$s and the role of $\bs{B}(t)$ by $\bs{B}_N(t)$. \medskip

Therefore an equation similar to \eqref{Eq:decrement} can be derived: there exist $T_{N-1}>0$ and $\lambda_{N-1}>0$ such that for all $t \in (0,T_{N-1})$, 

\begin{equation}
\lambda_{N-1} \left\|\bs{S}_N(t)^{-1} \right\| \leqslant |a_{N-1}(t)|^{-1} + \left\| \bs{S}_{N-1}(t)^{-1} \right\|.
\end{equation}

Here, $\bs{S}_{N-1}(t)$ is defined with respect to $\bs{S}_N(t)$ the same way $\bs{S}_N(t)$ was defined with respect to $\bs{M}(t)$. \medskip

Recursive application of this reasoning until 0 is reached yields the result.
\end{proof}

\begin{lem} \label{Lem:minoration_yinversey_Maclaurin}
Consider \eqref{Eq:Maclaurin}. 
If $\cap_{k=0}^N \Ker \bs{A}_k$ is the trivial vector space, 
if the vector space $\cap_{k=0}^{N-1} \Ker \bs{A}_k$ is nontrivial, 
and if there exists $T>0$ such that for all $t \in (0,T)$, 
$\bs{M}(t)$ is positive definite, 
then for any vector $\vecteur{v} \in \R^m$ that does not belong to the
vector space spanned by the columns of the matrices $\matrice{A}_k$
($1 \leqslant k \leqslant N-1$),

\begin{equation}
\liminf_{t \to 0} \bs{v} \bs{M}(t)^{-1} \bs{v} / \left\| \bs{M}(t)^{-1} \right\| > 0.
\end{equation} 
\end{lem}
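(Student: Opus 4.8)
The plan is to reuse the block decomposition built in the proof of Lemma \ref{Lem:majoration_inverse_Maclaurin} and to pin down the exact limiting shape of the rescaled inverse $a_N(t)\bs{M}(t)^{-1}$. First I would pass to the rotated coordinates of that proof: let $\bs{W}_N$ be an $m\times(m-d_N)$ matrix whose columns form an orthonormal basis of the space $\cap_{k=0}^{N-1}\Ker\bs{A}_k$ (non-trivial by hypothesis), complete it to an orthogonal matrix $(\bs{P}_N\,\bs{W}_N)$, and replace every matrix by its conjugate under $(\bs{P}_N\,\bs{W}_N)$. This leaves the Frobenius norm $\|\bs{M}(t)^{-1}\|$ unchanged and turns the quadratic form $\bs{v}\trans\bs{M}(t)^{-1}\bs{v}$ into $\tilde{\bs{v}}\trans\bs{M}(t)^{-1}\tilde{\bs{v}}$ with $\tilde{\bs{v}}=(\bs{P}_N\,\bs{W}_N)\trans\bs{v}$; since an orthogonal map sends a hyperplane to a hyperplane, it suffices to exhibit $\mathcal{H}$ in the new coordinates. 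There $\bs{A}_k''$ and $\bs{A}_k'''$ vanish for $k<N$, while $\bs{A}_N''$ is nonsingular because $\cap_{k=0}^N\Ker\bs{A}_k$ is trivial.

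Next I would feed the Schur-complement inverse formula \eqref{Eq:inverse_Schur_complement} the asymptotics already extracted in the proof of Lemma \ref{Lem:majoration_inverse_Maclaurin}: namely $a_N(t)\{a_N(t)\bs{A}_N''+\bs{B}(t)''\}^{-1}\to(\bs{A}_N'')^{-1}$, the off-diagonal factor $\{a_N(t)\bs{A}_N'''+\bs{B}(t)'''\}\{a_N(t)\bs{A}_N''+\bs{B}(t)''\}^{-1}$ stays bounded, and the Schur complement satisfies $\|\bs{S}_N(t)^{-1}\|=O(|a_{N-1}(t)|^{-1})$ by Lemma \ref{Lem:majoration_inverse_Maclaurin} applied to the reduced expansion \eqref{Eq:Maclaurin_lower_rank}. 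Because $a_N(t)=o(a_{N-1}(t))$, the $\bs{S}_N(t)^{-1}$ block is crushed once multiplied by $a_N(t)$, and multiplying out the three Schur factors gives the clean limit
\begin{equation}
a_N(t)\,\bs{M}(t)^{-1}\;\longrightarrow\;\begin{pmatrix}\bs{0}&\bs{0}\\\bs{0}&(\bs{A}_N'')^{-1}\end{pmatrix}=:\bs{M}_\infty,
\end{equation}
a fixed nonzero matrix whose range is exactly the $\bs{W}_N$-block.

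I would then convert this into a statement about $|a_N(t)|\bs{M}(t)^{-1}$. Since $a_N$ is continuous and nonvanishing on $(0,T)$ and on $(-T,0)$, it has constant sign on each side, so $|a_N(t)|\bs{M}(t)^{-1}$ converges as $t\to0^+$ and as $t\to0^-$ to $\pm\bs{M}_\infty$; being a limit of positive-definite matrices, each one-sided limit $\bs{M}_\infty^{\pm}$ is positive \emph{semi}-definite, with range the $\bs{W}_N$-block and kernel its orthogonal complement. Dividing numerator and denominator by $|a_N(t)|$ yields
\begin{equation}
\frac{\bs{v}\trans\bs{M}(t)^{-1}\bs{v}}{\|\bs{M}(t)^{-1}\|}\;\longrightarrow\;\frac{\bs{v}\trans\bs{M}_\infty^{\pm}\bs{v}}{\|\bs{M}_\infty^{\pm}\|},
\end{equation}
which is strictly positive precisely when $\bs{v}$ lies outside the kernel, i.e. outside the orthogonal complement of $\cap_{k=0}^{N-1}\Ker\bs{A}_k$. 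Because this intersection is non-trivial, its orthogonal complement has dimension at most $m-1$ and is therefore contained in some hyperplane $\mathcal{H}$; for any $\bs{v}\notin\mathcal{H}$ both one-sided limits are positive, so the two-sided $\liminf$ is positive, as required.

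The step I expect to be the main obstacle is establishing the exact limit $a_N(t)\bs{M}(t)^{-1}\to\bs{M}_\infty$: one must verify that the Schur-complement block is genuinely negligible after the $a_N(t)$ rescaling -- which is exactly where the bound $\|\bs{S}_N(t)^{-1}\|=O(|a_{N-1}(t)|^{-1})$ and the gap $a_N=o(a_{N-1})$ are indispensable -- and that the surviving block is precisely $(\bs{A}_N'')^{-1}$, uncontaminated by the bounded off-diagonal terms. Everything downstream -- identifying the degenerate directions as the orthogonal complement of $\cap_{k=0}^{N-1}\Ker\bs{A}_k$ and enclosing them in a hyperplane -- is then routine linear algebra made possible by the non-triviality hypothesis.
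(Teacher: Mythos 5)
Your proposal is correct and follows essentially the same route as the paper's proof: the same rotation onto $\cap_{k=0}^{N-1} \Ker \bs{A}_k$, the same Schur-complement inversion formula \eqref{Eq:inverse_Schur_complement}, and the same input $\left\| \bs{S}_N(t)^{-1} \right\| = O(|a_{N-1}(t)|^{-1})$ drawn from Lemma \ref{Lem:majoration_inverse_Maclaurin}, leading to the same hyperplane (one containing the orthogonal complement of $\cap_{k=0}^{N-1} \Ker \bs{A}_k$). The only real difference is that you prove the stronger statement that $a_N(t) \bs{M}(t)^{-1}$ converges to $\mathrm{diag}\left(\bs{0}, (\bs{A}_N'')^{-1}\right)$, whereas the paper gets by with less work: it reads off the lower bound $\bs{v} \trans \bs{M}(t)^{-1} \bs{v} \geqslant (\bs{v}'') \trans \left\{ a_N(t) \bs{A}_N'' + \bs{B}(t)'' \right\}^{-1} \bs{v}''$ directly from the positive definiteness of the middle block-diagonal factor and combines it with the already-established upper bound $\left\| \bs{M}(t)^{-1} \right\| = O(|a_N(t)|^{-1})$.
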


Lemma \ref{Lem:minoration_yinversey_Maclaurin} is used in the proof of Lemma \ref{Lem:upper_bound_yfactor_in_likelihood} and thus contributes to the proof of Theorem \ref{THM:REFERENCE_POSTERIOR_PROPER}.

\begin{proof}
This result is trivial if $N=0$. If $N \geqslant 1$, it follows from the proof of Lemma \ref{Lem:majoration_inverse_Maclaurin}. Indeed, the requirements of this lemma are stronger than those of Lemma \ref{Lem:majoration_inverse_Maclaurin}, so all intermediate results of its proof are valid. 
Consider the right-hand side of Equation (\ref{Eq:inverse_Schur_complement}) while assuming $\bs{C}$ is positive definite. The matrices on the left-hand side and on the right-hand side are the transpose of one another, so the middle matrix is necessarily positive definite. In particular, both $\bs{S}^{-1}$ and $\left(\bs{C}''\right)^{-1}$ are positive definite. Any vector $\bs{v} \in \R^m$ can be decomposed as $\bs{v} = ((\bs{v}')\trans,(\bs{v}'')\trans) \trans$ with $\bs{v}' \in \R^{d_N}$ and $\bs{v}'' \in \R^{m-d_N}$. This decomposition yields a lower bound: $\bs{v} \trans \bs{C}^{-1} \bs{v} \geqslant \left(\bs{v}''\right) \trans \left(\bs{C}''\right)^{-1} \bs{v}''$. 
Here, $\bs{C}$ is $\bs{M}(t)$, $\bs{S}$ is $\bs{S}_N(t)$ and $\bs{C}''$ is $a_N(t) \bs{A}_N'' + \bs{B}''(t)$. Let us recall that $\bs{A}_N''$ is nonsingular and $\| \bs{B}''(t) \| = o(|a_N(t)|)$ when $t \to 0$.
So as long as  $\bs{v}$ is not orthogonal to $\cap_{k=0}^{N-1} \Ker \bs{A}_k$, $\bs{v}''$ is non-zero and there exists $\tilde{\lambda}_N(\bs{v})>0$ such that when $t$ is small enough, $\bs{v} \trans \bs{M}(t)^{-1} \bs{v} \geqslant \tilde{\lambda}_N(\bs{v}) |a_N(t)|^{-1}$. 
Then Lemma \ref{Lem:majoration_inverse_Maclaurin} yields the result.

\end{proof}

In order to be able to state the next lemmas, consider the vector spaces $V_0, \dots, V_N$ recursively defined as follows:

\begin{equation}
  V_N := \cap_{k=0}^{N-1} \Ker \matrice{A}_k.
\end{equation}

For every positive integer $K$ smaller or equal to $N-1$:

\begin{equation}
  V_K := \left( \cap_{k=0}^{K-1} \Ker \matrice{A}_k \right) \bigcap \left( \cap_{k=K+1}^{N} V_k^\perp \right).  
\end{equation}

Finally, define
\begin{equation}
  V_0 := \cap_{k=1}^{N} V_k^\perp.
\end{equation}

By construction, we have

\begin{equation}
  \R^m = V_0 \overset{\perp}{\oplus} V_1 \overset{\perp}{\oplus} \dots \overset{\perp}{\oplus} V_{N}.
\end{equation}

\begin{lem} \label{Lem:developpement_asymptotique_defini_positif}
  Consider \eqref{Eq:Maclaurin}. 
  Assume that $ \cap_{k=0}^{N} \Ker \matrice{A}_k$ is the trivial vector space.
  If there exists $T>0$ such that for all positive $t<T$, 
  $\matrice{M}(t)$ is positive definite, then 
  for every nonnegative integer $k \leqslant N$, 
  for all sufficiently small $t>0$, we have for every vector $\vecteur{\xi}_k \in V_k \setminus \{ \vecteur{0} \}$ $a_k(t) \vecteur{\xi}_k \trans \matrice{A}_k \vecteur{\xi}_k > 0$.
\end{lem}

Lemma \ref{Lem:developpement_asymptotique_defini_positif} gives a sufficient condition for one of the assumptions of Lemma \ref{Lem:encadrement_developpement_asymptotique} below. Lemma \ref{Lem:encadrement_developpement_asymptotique} is used in the proof of Lemma \ref{Lem:majoration_derivee_RQ_SE} and thus contributes to the proof of  Proposition \ref{Prop:ref_prior_behavior}.

\begin{rmq}
  Note that for any nonnegative integer $k \leqslant N$, 
  the assertion ``for all sufficiently small $t>0$, we have for every vector $\vecteur{\xi}_k \in V_k \setminus \{ \vecteur{0} \}$ $a_k(t) \vecteur{\xi}_k \trans \matrice{A}_k \vecteur{\xi}_k > 0$'' implies that one of the following assertions is true:

  \begin{itemize}
    \item For all $\vecteur{\xi}_k \in V_k \setminus \{\vecteur{0}\}$, $  \vecteur{\xi}_k \trans \matrice{A}_k \vecteur{\xi}_k >0$.
    \item For all $\vecteur{\xi}_k \in V_k \setminus \{\vecteur{0}\}$, $  \vecteur{\xi}_k \trans \matrice{A}_k \vecteur{\xi}_k <0$.
  \end{itemize}
\end{rmq}

\begin{proof}
Assume that $ \cap_{k=0}^{N} \Ker \matrice{A}_k$ is the 
trivial vector space and that there exists $T>0$ such that 
for all positive $t<T$, $\matrice{M}(t)$ is positive definite.
\medskip

If the conclusion of 
Lemma \ref{Lem:developpement_asymptotique_defini_positif} 
does not hold,
then there exist a nonnegative integer $K \leqslant N$, a sequence $(t_l)_{l \in \N}$ of non-null real numbers converging to 0 and a sequence $(\vecteur{\xi}_{K,l})_{l \in \N}$ of vectors of $V_K$ satisfying $\vecteur{\xi}_{K,l} \trans \vecteur{\xi}_{K,l} = 1$ such that 
$a_K(t_l) \vecteur{\xi}_{K,l} \trans \matrice{A}_K \vecteur{\xi}_{K,l} \leqslant 0$ for all $l \in \N$.
\medskip

Let $\matrice{W}_K$ be a matrix with $m$ rows and with number of columns equal to the dimension of $V_0 + \dots + V_K$ such that for any vector $\matrice{\xi} \in V_0 + \dots + V_K$, $\matrice{W}_K \matrice{W}_K \trans \vecteur{\xi} = \vecteur{\xi}$. 
For any sufficiently great $l \in \N$, $\matrice{M}(t_l)$ is positive definite. 
Given that the kernel of $\matrice{W}_K$ is the trivial vector space, $\matrice{W}_K \trans \matrice{M}(t_l) \matrice{W}_K$ is positive definite as well. 
Moreover, $\cap_{k=0}^K \Ker \matrice{W}_K \trans \matrice{A}_k \matrice{W}_K$ is the trivial vector space. 
This means that Lemma \ref{Lem:majoration_inverse_Maclaurin} is applicable and $\left\| \left( \matrice{W}_K \matrice{M}(t_l) \matrice{W}_K \right)^{-1} \right\| = O(|a_K(t_l)|^{-1})$ when $l \to +\infty$.
\medskip

However, for all $l \in \N$,
\begin{equation}
  a_K(t_l) \vecteur{\xi}_{K,l} \trans \matrice{A}_K \vecteur{\xi}_{K,l} = a_K(t_l) \vecteur{\xi}_{K,l} \trans \matrice{W}_K \left( \matrice{W}_K \trans \matrice{A}_K \matrice{W}_K \right) \matrice{W}_K \trans \vecteur{\xi}_{K,l} \leqslant 0. 
\end{equation}
Since for every nonnegative integer $k < K$ and every nonnegative integer $l$, $ \vecteur{\xi}_{K,l} \trans \matrice{W}_K \left( \matrice{W}_K \trans \matrice{A}_k \matrice{W}_K \right) \matrice{W}_K \trans \vecteur{\xi}_{K,l} = 0$,
this implies that when $l \to +\infty$,
\begin{equation}
  a_K(t_l) \vecteur{\xi}_{K,l} \trans 
  \matrice{W}_K
  \left(
    \matrice{W}_K \trans \matrice{M}(t_l)
    \matrice{W}_K
  \right)
  \matrice{W}_K \trans \vecteur{\xi}_{K,l} = o(|a_{K}(t_l)|).
\end{equation}

This contradicts the earlier result that when $l \to +\infty$,

\begin{equation}
  \left\| \left( \matrice{W}_K \matrice{M}(t_l) \matrice{W}_K \right)^{-1} \right\| = O(|a_K(t_l)|^{-1}).
\end{equation}

Therefore the conclusion of 
Lemma \ref{Lem:developpement_asymptotique_defini_positif} must hold.
\end{proof}

\begin{lem} \label{Lem:encadrement_developpement_asymptotique}
  Consider \eqref{Eq:Maclaurin}. Assume that $ \cap_{k=0}^{N} \Ker \matrice{A}_k$ is 
  the trivial vector space. 
  Further assume that for any nonnegative integer $k \leqslant N$, 
  for all sufficiently small $t>0$ and for all $\tilde{\vecteur{\xi}}_k \in V_k \setminus \{\vecteur{0}\}$,
  $a_k(t) \tilde{\vecteur{\xi}}_k \trans \matrice{A}_k \tilde{\vecteur{\xi}}_k > 0$.
  Then, for any $\epsilon>0$, there exists a real number $T>0$ such that for all $t \in (0,T)$
  and
  for any vector $\vecteur{\xi} \in \R^m$, letting $\vecteur{\xi} = \vecteur{\xi}_0 + \dots + \vecteur{\xi}_{N}$ be its unique decomposition
  according to the subspaces $V_0, \dots, V_{N}$,
  \begin{equation}
    (1-\epsilon) \sum_{k=0}^{N} a_k(t) \vecteur{\xi}_k \trans \matrice{A}_k \vecteur{\xi}_k
    \leqslant
    \vecteur{\xi} \trans \matrice{M}(t) \vecteur{\xi}
    \leqslant
    (1+\epsilon) \sum_{k=0}^{N} a_k(t) \vecteur{\xi}_k \trans \matrice{A}_k \vecteur{\xi}_k.
  \end{equation}
\end{lem} 

Lemma \ref{Lem:encadrement_developpement_asymptotique} is used in the proof of Lemma \ref{Lem:majoration_derivee_RQ_SE} and thus contributes to the proof of  Proposition \ref{Prop:ref_prior_behavior}.

\begin{proof}
For any nonnegative integers $k$, $m$ and $m'$ that are smaller or equal to $N$, if either $m>k$ or $m'>k$, then

\begin{equation}
  \vecteur{\xi}_m \trans \matrice{A}_k \vecteur{\xi}_{m'} = 0.
\end{equation}

This means that 
\begin{align}
  \vecteur{\xi} \trans \matrice{M}(t) \vecteur{\xi}
  &=
  \underbrace{\sum_{k=0}^N a_k(t) \vecteur{\xi}_k \trans \matrice{A}_k \vecteur{\xi}_k}_{f_1(t)}
  + \nonumber \\
  & \quad
  \underbrace{\sum_{k=0}^N \left( \sum_{\substack{0 \leqslant m,m' \leqslant k \leqslant N \\ m \neq k \, \mathrm{or} \, m' \neq k}} a_k(t) \vecteur{\xi}_m \trans \matrice{A}_k \vecteur{\xi}_{m'} \right)}_{f_2(t)}
  +
  \underbrace{\sum_{0 \leqslant m,m' \leqslant N} \vecteur{\xi}_m \trans \matrice{B}(t) \vecteur{\xi}_{m'}}_{f_3(t)}
  .
\end{align}

Let us examine every term in $f_2(t)$ and $f_3(t)$. 

To do this, define 

\begin{equation}
  d_k^{min} := \min \{ |\vecteur{\xi}_k \trans \matrice{A}_k \vecteur{\xi}_k| : \vecteur{\xi}_k \in V_k, \vecteur{\xi}_k \trans \vecteur{\xi}_k = 1\} \nonumber .
\end{equation}

And then

\begin{equation}
  d^{min} := \min \{ d_k^{min} : k \in \N, 0 \leqslant k \leqslant N \} \nonumber . 
\end{equation}

Due to the assumption that for any integer $k \in [\![0,N]\!]$, 
for all sufficiently small $t>0$, 
$a_k(t) \vecteur{\xi}_k \trans \matrice{A}_k \vecteur{\xi}_k > 0$, 

\begin{equation}
  d^{min} > 0.
\end{equation}

For good measure, also define

\begin{equation}
  d^{max} := \max \{ \| \matrice{A}_k \| : k \in \N, 0 \leqslant k \leqslant N \}. \nonumber
\end{equation}

Choose a real number $\epsilon > 0$. \medskip

For any nonnegative intergers $m, m' \leqslant N$, 

\begin{align}
  | \vecteur{\xi}_m \trans \matrice{B}(t) \vecteur{\xi}_{m'} |
  \leqslant
  \|\matrice{B}(t)\| \| \vecteur{\xi}_m \| \| \vecteur{\xi}_{m'} \|
  \leqslant
  \|\matrice{B}(t)\| (\vecteur{\xi}_m \trans \vecteur{\xi}_m + \vecteur{\xi}_{m'} \trans \vecteur{\xi}_{m'}).
\end{align}

Because $\|\matrice{B}(t)\| = o(|a_N(t)|)$, there exists $T_B>0$ such that for all $t \in (0,T_B)$ and for any nonnegative integer $K \leqslant N$:

\begin{equation}
  \|\matrice{B}(t)\| \leqslant |a_K(t)|  d^{min} \epsilon .
\end{equation}

It follows that for all $t \in (0,T_B)$:

\begin{align}
  \|\matrice{B}(t)\| (\vecteur{\xi}_m \trans \vecteur{\xi}_m + \vecteur{\xi}_{m'} \trans \vecteur{\xi}_{m'})
  \leqslant
  \epsilon \left( a_m(t) \vecteur{\xi}_m \trans \matrice{A}_m \vecteur{\xi}_m  + a_{m'}(t) \vecteur{\xi}_{m'} \trans \matrice{A}_{m'} \vecteur{\xi}_{m'} \right).
\end{align}

Therefore, for all $t \in (0,T_B)$:

\begin{equation}
  | \vecteur{\xi}_m \trans \matrice{B}(t) \vecteur{\xi}_{m'} |
  \leqslant
  \epsilon \left( a_m(t) \vecteur{\xi}_m \trans \matrice{A}_m \vecteur{\xi}_m  + a_{m'}(t) \vecteur{\xi}_{m'} \trans \matrice{A}_{m'} \vecteur{\xi}_{m'} \right).
\end{equation}

And then, for all  $t \in (0,T_B)$:

\begin{equation} \label{Eq:controle_matrice_B}
  |f_3(t)| \leqslant 
  \epsilon \sum_{0 \leqslant m,m' \leqslant N} a_m(t) \vecteur{\xi}_m \trans \matrice{A}_m \vecteur{\xi}_m  + a_{m'}(t) \vecteur{\xi}_{m'} \trans \matrice{A}_{m'} \vecteur{\xi}_{m'}
  =
  \epsilon 2 (N+1) f_1(t).
\end{equation}

For any nonegative integers $m,m',k \leqslant N$ such that both $m \leqslant k$ and $m' \leqslant k$:

\begin{align}
  | \vecteur{\xi}_m \trans \matrice{A}_k \vecteur{\xi}_{m'} |
  \leqslant
  \|\matrice{A}_k\| \| \vecteur{\xi}_m \| \| \vecteur{\xi}_{m'} \|
  \leqslant
  d^{max} (\vecteur{\xi}_m \trans \vecteur{\xi}_m + \vecteur{\xi}_{m'} \trans \vecteur{\xi}_{m'}).
\end{align}

Let us first consider the case where both $m<k$ and $m'<k$. 
Then, since $|a_k(t)| = o(|a_m(t)|)$ and  $|a_k(t)| = o(|a_{m'}(t)|)$, 
there exists $T_{m,m',k}>0$ such that for all $t \in (0, T_{m,m',k})$,

\begin{align}
  d^{max} |a_k(t)| \leqslant d^{min} \min(|a_m(t)|, |a_{m'}(t)|) \epsilon.  
\end{align}

So, in the case where both $m<k$ and $m'<k$, we have for all $t \in (0, T_{m,m',k})$:

\begin{align}
  | \vecteur{\xi}_m \trans \matrice{A}_k \vecteur{\xi}_{m'} |
  \leqslant
  \epsilon \left( a_m(t) \vecteur{\xi}_m \trans \matrice{A}_m \vecteur{\xi}_m  + a_{m'}(t) \vecteur{\xi}_{m'} \trans \matrice{A}_{m'} \vecteur{\xi}_{m'} \right). 
\end{align}

Let us now consider the case where $m<k$ and $m'=k$ (the case where $m=k$ and $m'<k$ being equivalent since $\matrice{A}_k$ is symmetric).

\begin{align}
  | \vecteur{\xi}_m \trans \matrice{A}_k \vecteur{\xi}_{k} |
  \leqslant
  \|\matrice{A}_k\| \| \vecteur{\xi}_m \| \| \vecteur{\xi}_{k} \|
  \leqslant
  d^{max} \| \vecteur{\xi}_m \| \| \vecteur{\xi}_{k} \|.
\end{align}

If $\| \vecteur{\xi}_m \| \leqslant  \epsilon (d^{min} / d^{max})  \| \vecteur{\xi}_{k} \|$, then:

\begin{align} \label{Eq:xim_petit}
  | \vecteur{\xi}_m \trans \matrice{A}_k \vecteur{\xi}_{k} |
  \leqslant
  d^{min} \vecteur{\xi}_k \trans \vecteur{\xi}_k \epsilon 
  \leqslant
  \vecteur{\xi}_k \trans \matrice{A}_k \vecteur{\xi}_k \epsilon.
\end{align}

If $\| \vecteur{\xi}_m \| >  \epsilon (d^{min} / d^{max})  \| \vecteur{\xi}_{k} \|$, then:

\begin{align}
  | \vecteur{\xi}_m \trans \matrice{A}_k \vecteur{\xi}_{k} |
  <
  \frac{d^{max}}{\epsilon (d^{min} / d^{max})} \vecteur{\xi}_m \trans \vecteur{\xi}_m.
\end{align}

Since $|a_k(t)| = o(|a_m(t)|)$, there exists $T_{m,k}>0$ such that for all $t \in (0, T_{m,k})$,

\begin{align}
  \frac{d^{max}}{\epsilon (d^{min} / d^{max})} |a_k(t)| \leqslant d^{min} |a_m(t)| \epsilon.  
\end{align}

Therefore, if $\| \vecteur{\xi}_m \| >  \epsilon (d^{min} / d^{max})  \| \vecteur{\xi}_{k} \|$, provided $t \in (0, T_{m,k})$,

\begin{equation} \label{Eq:xim_grand}
  |a_k(t)| | \vecteur{\xi}_m \trans \matrice{A}_k \vecteur{\xi}_{k} |
  <
  a_m(t) d^{min} \vecteur{\xi}_m \trans \vecteur{\xi}_m \epsilon
  \leqslant
  a_m(t) \vecteur{\xi}_m \trans \matrice{A}_m \vecteur{\xi}_m \epsilon.
\end{equation}

Putting together \eqref{Eq:xim_petit} and \eqref{Eq:xim_grand}, we obtain that for all $\vecteur{\xi} \in \R^m$, for all $t \in (0, T_{m,k})$,

\begin{align}
  | \vecteur{\xi}_m \trans \matrice{A}_k \vecteur{\xi}_{k} |
  \leqslant
  \epsilon \left( a_m(t) \vecteur{\xi}_m \trans \matrice{A}_m \vecteur{\xi}_m  + a_{k}(t) \vecteur{\xi}_{k} \trans \matrice{A}_{k} \vecteur{\xi}_{k} \right). 
\end{align}

Define 

\begin{align}
  T_A := \min( &\min\{ T_{m,m',k} | m,m',k \in \N ;  0 \leqslant m,m' < k \leqslant N \}, \nonumber \\
  & \min\{ T_{m,k} | m, k \in \N ; 0 \leqslant m < k \leqslant N\}). \nonumber
\end{align}

For all $t \in (0,T_A)$, 

\begin{align}
  |f_2(t)| & \leqslant
  \epsilon 
  \sum_{k=0}^N \left(
    \sum_{\substack{0 \leqslant m,m' \leqslant k \leqslant N \\ m \neq k \, \mathrm{or} \, m' \neq k}} 
    \left( a_m(t) \vecteur{\xi}_m \trans \matrice{A}_m \vecteur{\xi}_m  + a_{m'}(t) \vecteur{\xi}_{m'} \trans \matrice{A}_{m'} \vecteur{\xi}_{m'} \right)
  \right) \nonumber \\
  & \leqslant
  \epsilon (N+1) \sum_{0 \leqslant m,m' \leqslant N}
  \left( a_m(t) \vecteur{\xi}_m \trans \matrice{A}_m \vecteur{\xi}_m  + a_{m'}(t) \vecteur{\xi}_{m'} \trans \matrice{A}_{m'} \vecteur{\xi}_{m'} \right) \nonumber \\
  & \leqslant
  \epsilon 2 (N+1)^2 f_1(t). \label{Eq:controle_matrice_A}
\end{align}

Defining $T:=\min(T_A, T_B)$, Equations \eqref{Eq:controle_matrice_B} and \eqref{Eq:controle_matrice_A} yield that for all $t \in (0,T)$, 

\begin{equation}
  |f_2(t)| + |f_3(t)| \leqslant \epsilon 2 (N+2)^2 f_1(t).
\end{equation}

As $\epsilon$ can be taken arbitrarily small, after redefining 
$\epsilon := \epsilon / (2(N+2)^2)$, this yields the result.
\end{proof}

\subsection{Bound for \texorpdfstring{$\left\|\co^{-1}\right\|$}{Lg} with Matérn kernels}

In order to derive a bound for $\left\|\co^{-1}\right\|$ 
in the case of Matérn kernels (Lemma \ref{Lem:Matern_decay}), 
the spectral representation of the kernels is convenient.
\medskip

To use it, we need this preliminary Bochner-type result:

\begin{lem} \label{Lem:Bochner+}
Let $\mu$ be a positive measure on $\R^r$ with finite non-null total mass that is absolutely continuous with respect to the Lebesgue measure.
Then the mapping $K:\R^r \rightarrow \R$ defined by 

\begin{equation}
K(\bs{x}) = \int_{\R^r} e^{i \langle \bs{\omega} | \bs{x} \rangle } d \mu(\bs{\omega})
\end{equation}

is positive definite. Moreover, for any $\bs{\xi} \in \R^n \setminus \{\bs{0}_n\}$,

\begin{equation}
\sum_{k,l \in [\![1,n]\!]} \xi_k \xi_l K(\bs{x}^{(k)} - \bs{x}^{(l)}) > 0.
\end{equation}
  \end{lem}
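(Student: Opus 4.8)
The plan is to collapse the double sum into a single nonnegative integral and then show that this integral can vanish only when $\bs{\xi} = \bs{0}_n$. First I would interchange the finite sum with the integral — legitimate since $\mu$ has finite total mass and each exponential has modulus $1$ — to obtain, for any real $\bs{\xi} = (\xi_1,\dots,\xi_n)\trans$,
\begin{equation}
\sum_{k,l \in [\![1,n]\!]} \xi_k \xi_l \, K(\bs{x}^{(k)} - \bs{x}^{(l)}) = \int_{\R^r} \left| \sum_{k=1}^n \xi_k \, e^{i \langle \bs{\omega} | \bs{x}^{(k)} \rangle} \right|^2 d\mu(\bs{\omega}),
\end{equation}
using $e^{i \langle \bs{\omega} | \bs{x}^{(k)} - \bs{x}^{(l)} \rangle} = e^{i \langle \bs{\omega} | \bs{x}^{(k)} \rangle} \, \overline{e^{i \langle \bs{\omega} | \bs{x}^{(l)} \rangle}}$. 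The right-hand side is manifestly real and nonnegative, which already yields positive definiteness. (The quadratic form is real and nonnegative for any positive $\mu$; $K$ itself is genuinely real-valued precisely when $\mu$ is even, as in the applications where $\mu$ is the radial, hence even, spectral density $\widehat{K}_r$.)

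Writing $f(\bs{\omega}) := \sum_{k=1}^n \xi_k \, e^{i \langle \bs{\omega} | \bs{x}^{(k)} \rangle}$, the strict inequality reduces to showing $\int_{\R^r} |f|^2 \, d\mu > 0$ whenever $\bs{\xi} \neq \bs{0}_n$. The next step is to note that $f$ is not the zero function: since the design points $\bs{x}^{(k)}$ are distinct, the characters $\bs{\omega} \mapsto e^{i \langle \bs{\omega} | \bs{x}^{(k)} \rangle}$ are linearly independent, so $f \equiv 0$ would force $\bs{\xi} = \bs{0}_n$. Hence $f \not\equiv 0$, and therefore $|f|^2 \not\equiv 0$.

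The key step is then to control the size of the zero set. Both $\mathrm{Re}\,f$ and $\mathrm{Im}\,f$, and thus $|f|^2$, are real-analytic on $\R^r$, and a real-analytic function that is not identically zero has a Lebesgue-null zero set; consequently $\{\,|f|^2 = 0\,\}$ is $\lambda$-negligible. Finally, because $\mu$ is absolutely continuous with respect to Lebesgue measure $\lambda$ with non-null total mass, its density $\rho$ satisfies $\lambda(\{\rho > 0\}) > 0$. Deleting the $\lambda$-null zero set of $f$ from $\{\rho > 0\}$ leaves a set of positive Lebesgue measure on which the integrand $|f|^2 \rho$ is strictly positive, so $\int_{\R^r} |f|^2 \, d\mu = \int_{\R^r} |f|^2 \rho \, d\lambda > 0$, as required.

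I expect the main obstacle to be precisely the negligibility of $\{f = 0\}$. Absolute continuity of $\mu$ by itself only guarantees positive mass on $\{\rho > 0\}$, a set that could a priori avoid any prescribed nonempty open region; so the continuity of $f$ alone — which would only provide a nonempty open set where $f \neq 0$ — is insufficient. One genuinely needs that the zeros of $f$ form a Lebesgue-null set, which is where real-analyticity (equivalently, an upgrade of the linear independence of distinct characters into a statement about the measure of the zero set) does the essential work.
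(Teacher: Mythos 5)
Your proposal is correct and follows essentially the same route as the paper: collapse the quadratic form into $\int |f|^2\,d\mu$, observe that the zero set of $f=\sum_k \xi_k e^{i\langle\cdot|\bs{x}^{(k)}\rangle}$ is Lebesgue-negligible for $\bs{\xi}\neq\bs{0}_n$ (the paper asserts this; you supply the real-analyticity justification), and conclude via absolute continuity of $\mu$. The extra detail you provide on why the zero set is null is a welcome elaboration rather than a deviation.
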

  
\begin{proof}
  The first part results from Bochner's theorem. Let us show the second.
  
  \begin{align}
  \sum_{k,l \in [\![1,n]\!]} \xi_k \xi_l K(\bs{x}^{(k)} - \bs{x}^{(l)}) &= \sum_{k,l \in [\![1,n]\!]} \xi_k \xi_l \int_{\R^r} e^{i \langle \bs{\omega} | \bs{x}^{(k)} - \bs{x}^{(l)} \rangle } d \mu(\bs{\omega})  \nonumber \\
  &= \int_{\R^r} \left| \sum_{k=1}^n \xi_k e^{i \langle \bs{\omega} | \bs{x}^{(k)} \rangle } \right|^2 d \mu(\bs{\omega}).
  \end{align}
  
  Given $\bs{x}^{(1)},...,\bs{x}^{(n)}$ are all distinct, 
  for almost all unitary vectors $\bs{u}$ in the sense of the Lebesgue measure on the unit sphere $S^{r-1}$, the real numbers $\langle \bs{u} | \bs{x}^{(1)} \rangle,...,\langle \bs{u} | \bs{x}^{(n)} \rangle$ are distinct. Indeed, if two of these numbers, say $\langle \bs{u} | \bs{x}^{(1)} \rangle$ and $\langle \bs{u} | \bs{x}^{(2)} \rangle$, were equal, then $\bs{u}$ would be orthogonal to $\bs{x}^{(1)}-\bs{x}^{(2)}$. But the set of all vectors of $\R^r$ orthogonal to $\bs{x}^{(1)}-\bs{x}^{(2)}$ is a hyperplane. So there exists a finite number of hyperplanes of $\R^r$ such that, if $\bs{u}$ does not belong to any of them, the real numbers $\langle \bs{u} | \bs{x}^{(1)} \rangle,...,\langle \bs{u} | \bs{x}^{(n)} \rangle$ are distinct. \medskip
  
  Now, notice that the mapping $\mathbb{C} \rightarrow \mathbb{C}; z \mapsto \sum_{k=1}^n \xi_k e^{i z \langle \bs{u} | \bs{x}^{(k)} \rangle }$ is holomorphic. So either it is the null function or all its zeros are isolated. Given it clearly is not the null function, its zeros are isolated. So the set of all zeros that belong to $\R$ is countable and therefore of null Lebesgue measure. \medskip
    
  Let $f: \R^r \rightarrow \{0,1\}$ be the measurable mapping such that $f(\bs{\omega})=1$ if $\sum_{k=1}^n \xi_k e^{i \langle \bs{\omega} | \bs{x}^{(k)} \rangle } = 0$ and $f(\bs{\omega})=0$ if not.
  
  \begin{equation}
  \int_{\R^r} f(\bs{\omega}) d \bs{\omega} = \frac{2 \pi^{\frac{r}{2}}}{\Gamma \left( \frac{r}{2}\right)}\int_{S^{r-1}} \int_{(0,+\infty)} f(t \bs{u}) t^{r-1} dt d\bs{u} = \frac{2 \pi^{\frac{r}{2}}}{\Gamma \left( \frac{r}{2}\right)}\int_{S^{r-1}} 0 \, d \bs{u} = 0.
  \end{equation}
  
  Therefore the mapping $\R^r \rightarrow \mathbb{C}$; $\bs{\omega} \mapsto \sum_{k=1}^n \xi_k e^{i \langle \bs{\omega} | \bs{x}^{(k)} \rangle }$ takes null values on a Borel set that is negligible with respect to the Lebesgue measure. This set is therefore also negligible with respect to $\mu$, which yields the conclusion.
\end{proof}

\begin{lem} \label{Lem:Matern_spectre}
  For a Matérn kernel with smoothness $\nu$, for all $\theta \in (0,+\infty)$ and for any vector $\vecteur{\xi} \in \R^n$,

  \begin{equation} \label{Eq:spectral_decomposition}
    \forall \bs{\xi} \in \R^n, \;
    \bs{\xi} \trans \co \bs{\xi}
    = M_r \theta^r I_\theta (\bs{\xi}),
  \end{equation}
  
  where

  \begin{align}
    M_r &= \frac{\Gamma ( \nu + \frac{r}{2} ) (2 \sqrt{\nu} )^{2 \nu } }{ \pi^{\frac{r}{2} } \Gamma(\nu) } ; \\
    I_\theta(\bs{\xi}) &= \int_{\R^r} 
    \left( 4 \nu + \theta^2 \| \bs{s} \|^2 \right)^{-\frac{r}{2} - \nu}
    \left| \sum_{j=1}^n \xi_j e^{i \langle \left. \bs{s} \right| 
    \bs{x}^{(j)} \rangle } \right|^2 d \bs{s}.
  \end{align}
\end{lem}

\begin{proof}
  Let us set up a few notations. First, let $K$ denote the Matérn kernel with parameter $\nu$  and $\widehat{K}_{r}$ its $r$-dimensional Fourier transform:

  \begin{equation}
  \widehat{K}_{r} (\bs{\omega}) = (2 \pi)^{-r} \int_{\R^r} K (\|\bs{x}\|) e^{- i \left\langle \bs{\omega} \left| \bs{x} \right. \right\rangle } d \bs{x}
  \quad \mathrm{and} \quad
  K(\|\bs{x}\|) = \int_{\R^r} \widehat{K}_{r} ( \bs{\omega} ) e^{ i \left\langle \bs{\omega} \left| \bs{x} \right. \right\rangle } d \bs{\omega}.
  \end{equation}

  $\widehat{K}_{r} (\bs{\omega})$ has a straightforward expression \citep{Ras06}:

  \begin{equation} \label{Eq:Matern_explicit_Fourier_transform}
    \widehat{K}_{r} (\bs{\omega}) = M_r (4\nu + \|\bs{\omega}\|^2)^{-\frac{r}{2}-\nu}.
  \end{equation}
  
  For all $\theta \in (0,+\infty)$, using the correlation kernel $K_\theta(\cdot) = K(\cdot/\theta)$, the correlation matrix $\co$ is such that:
  
  \begin{equation} 
  \forall \bs{\xi} \in \R^n, \;
  \bs{\xi} \trans \co \bs{\xi}
  = \sum_{j,k=1}^n \xi_j \xi_k  K \left(\frac{ \left\| \bs{x}^{(j)} - \bs{x}^{(k)} \right\| }{\theta} \right) 
  = \int_{\R^r} \widehat{K_{r}} (\bs{\omega}) 
  \left| \sum_{j=1}^n \xi_j e^{i \left\langle \bs{\omega}
  \left| \frac{\bs{x}^{(j)}}{\theta} \right. \right\rangle }  \right|^2 d\bs{\omega} .
  \end{equation}

  Plugging \eqref{Eq:Matern_explicit_Fourier_transform} into this equation yields the result. 
\end{proof}

We are now able to prove a fact about Matérn kernels that plays a crucial role in the proof of Proposition \ref{Prop:ref_prior_behavior} and is also used in the proof of Theorem \ref{THM:REFERENCE_POSTERIOR_PROPER}.

\begin{lem} \label{Lem:Matern_decay}
For Matérn kernels, when $\theta \to +\infty$ $\left\| \co^{-1} \right\| = O(\theta^{2\nu})$.
\end{lem}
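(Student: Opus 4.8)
The plan is to control $\| \co^{-1} \|$ through the smallest eigenvalue of $\co$ and to bound that eigenvalue from below by exploiting the spectral representation in Equation (\ref{Eq:spectral_decomposition}). Since $\co$ is symmetric positive definite, its eigenvalues $\lambda_1 \geqslant \dots \geqslant \lambda_n > 0$ give $\| \co^{-1} \| = \sqrt{\sum_i \lambda_i^{-2}} \leqslant \sqrt{n}\, \lambda_{\min}(\co)^{-1}$, where $\lambda_{\min}(\co) = \lambda_n$. Hence it suffices to prove that $\lambda_{\min}(\co) \geqslant c\, \theta^{-2\nu}$ for some constant $c>0$ once $\theta$ is large. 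Writing $\lambda_{\min}(\co) = \min_{\|\bs{\xi}\|=1} \bs{\xi} \trans \co \bs{\xi}$ and using the Matérn rows of Equation (\ref{Eq:spectral_decomposition}) and Table \ref{Tab:Fourier}, namely $\bs{\xi} \trans \co \bs{\xi} = M_r \theta^r I_\theta(\bs{\xi})$ with $M_r$ independent of $\theta$, this reduces to the uniform lower bound $\min_{\|\bs{\xi}\|=1} I_\theta(\bs{\xi}) \geqslant c'\, \theta^{-r-2\nu}$.

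The key step is an elementary comparison inside the integral defining $I_\theta$. For $\theta \geqslant 1$ and every $\bs{s} \in \R^r$ one has $4\nu + \theta^2 \|\bs{s}\|^2 \leqslant \theta^2 (4\nu + \|\bs{s}\|^2)$, and raising to the negative power $-\tfrac{r}{2}-\nu$ reverses the inequality:
\begin{equation}
\left(4\nu + \theta^2 \|\bs{s}\|^2 \right)^{-\frac{r}{2}-\nu} \geqslant \theta^{-r-2\nu} \left(4\nu + \|\bs{s}\|^2 \right)^{-\frac{r}{2}-\nu}.
\end{equation}
Integrating both sides against $\left| \sum_{j=1}^n \xi_j e^{i \langle \bs{s} | \bs{x}^{(j)} \rangle} \right|^2$ yields $I_\theta(\bs{\xi}) \geqslant \theta^{-r-2\nu} J(\bs{\xi})$, where $J(\bs{\xi}) := \int_{\R^r} (4\nu + \|\bs{s}\|^2)^{-\frac{r}{2}-\nu} \left| \sum_{j=1}^n \xi_j e^{i \langle \bs{s} | \bs{x}^{(j)} \rangle} \right|^2 d\bs{s}$ no longer depends on $\theta$. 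I would then note that $J$ is exactly the quadratic form obtained from Equation (\ref{Eq:spectral_decomposition}) at $\theta = 1$, so $J(\bs{\xi}) = \bs{\xi} \trans \bs{\Sigma}_1 \bs{\xi} / M_r$ and consequently $\min_{\|\bs{\xi}\|=1} J(\bs{\xi}) = \lambda_{\min}(\bs{\Sigma}_1) / M_r$.

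It remains to check that this minimum is strictly positive, which follows either directly from the model assumption that $\bs{\Sigma}_1$ is positive definite, or independently from Lemma \ref{Lem:Bochner+} applied to the finite positive measure $d\mu(\bs{s}) \propto (4\nu + \|\bs{s}\|^2)^{-\frac{r}{2}-\nu} d\bs{s}$. Its total mass is finite because the radial integral $\int_0^{+\infty} (4\nu + \rho^2)^{-\frac{r}{2}-\nu} \rho^{r-1} d\rho$ converges at infinity thanks to the exponent $-1-2\nu < -1$, so the lemma gives $J(\bs{\xi}) > 0$ for every $\bs{\xi} \neq \bs{0}_n$; continuity of $J$ on the compact unit sphere then forces $\min_{\|\bs{\xi}\|=1} J(\bs{\xi}) =: c' > 0$. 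Combining the estimates gives $\lambda_{\min}(\co) = M_r \theta^r \min_{\|\bs{\xi}\|=1} I_\theta(\bs{\xi}) \geqslant M_r c'\, \theta^{-2\nu}$ for $\theta \geqslant 1$, whence $\| \co^{-1} \| = O(\theta^{2\nu})$.

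I do not anticipate a serious obstacle: the argument is essentially a uniform-in-$\bs{\xi}$ rescaling of the Matérn spectral density, with compactness of the unit sphere supplying the uniformity. The only points needing care are verifying the finiteness of the comparison measure's mass so that Lemma \ref{Lem:Bochner+} applies, and using the scalar comparison with the correct negative exponent so that the inequality goes the right way.
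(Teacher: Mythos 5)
Your proposal is correct and follows essentially the same route as the paper: both lower-bound the Matérn spectral density pointwise by $\theta^{-r-2\nu}$ times a fixed integrable density and then invoke positive definiteness of the resulting $\theta$-independent quadratic form (via Lemma \ref{Lem:Bochner+}) to get a uniform lower bound on the smallest eigenvalue of $\co$. The only difference is cosmetic but slightly cleaner on your side: the paper compares with the truncated density $\| \bs{s} \|^{-r-2\nu} \bs{1}_{\|\bs{s}\| \geqslant 1}$ (forcing it to introduce an auxiliary kernel $K^{aux}$), whereas your comparison density $(4\nu + \|\bs{s}\|^2)^{-\frac{r}{2}-\nu}$ is the Matérn spectral density at $\theta=1$ itself, so positivity comes directly from the positive definiteness of $\bs{\Sigma}_1$.
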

\begin{proof}
 
We use the notations from the proof of Lemma \ref{Lem:Matern_spectre}. \medskip

For all $\theta \geqslant 2 \sqrt{\nu}$ and all $\vecteur{s} \in \R^r$ such that $\|\vecteur{s}\| \geqslant 1$,

\begin{equation}
  4\nu + \theta^2 \|\vecteur{s}\|^2 
  \leqslant 
  \theta^2(1+\|\vecteur{s}\|^2)
  \leqslant
  \theta^2 2 \|\vecteur{s}\|^2.
\end{equation}

This yields the following lower bound on the quantity $I_\theta(\vecteur{\xi})$, which was defined in Lemma \ref{Lem:Matern_spectre}. \medskip

When $\theta \geqslant 2 \sqrt{\nu}$, for any $\bs{\xi} \in \R^n$, 

\begin{align}
I_\theta(\bs{\xi}) 
&\geqslant \int_{\|\bs{s}\| \geqslant 1} 
\left( 4 \nu + \theta^2 \| \bs{s} \|^2 \right)^{-\frac{r}{2} - \nu} 
\left| \sum_{j=1}^n \xi_j e^{i \langle \left. \bs{s} \right| \bs{x}^{(j)} \rangle } \right|^2 d \bs{s} \nonumber \\
&\geqslant 2^{-\frac{r}{2} - \nu} \theta^{-r -2\nu}
\int_{\|\bs{s}\| \geqslant 1}  \| \bs{s} \|^{-r - 2\nu} 
\left| \sum_{j=1}^n \xi_j e^{i \langle \left. \bs{s} \right| \bs{x}^{(j)} \rangle } \right|^2 d \bs{s}.
\end{align}

Define the mapping $K^{aux}: \R^r \rightarrow \R$ by

\begin{equation}
K^{aux}(\bs{x})= \int_{\|\bs{s}\| \geqslant 1} e^{i \langle \bs{s} | \bs{x} \rangle } \| \bs{s} \|^{-r - 2\nu} \bs{1}_{\|\bs{s}\| \geqslant 1}  d\bs{s}.
\end{equation}

By Lemma \ref{Lem:Bochner+},
the $n \times n$ matrix $\bs{M}$ with $(i,i')$-th element $K^{aux} \left(( \bs{x}^{(i)} - \bs{x}^{(i')} ) \right)$ is positive definite.
For any $\bs{\xi} \in \R^n$,
\begin{equation}
  \int_{\|\bs{s}\| \geqslant 1}  \| \bs{s} \|^{-r - 2\nu} \left| \sum_{j=1}^n \xi_j e^{i \langle \left. \bs{s} \right| \bs{x}^{(i')} \rangle } \right|^2 d \bs{s} = \bs{\xi} \trans \bs{M} \bs{\xi}.
\end{equation}
Let $M$ denote the smallest eigenvalue of $\bs{M}$. For any $\bs{\xi} \in \R^n$, when $\theta \geqslant 2 \sqrt{\nu}$, $I_\theta(\bs{\xi}) \geqslant 2^{-\frac{r}{2} - \nu} M \|\bs{\xi}\|^2  \theta^{-r - 2\nu}$.
Lemma \ref{Lem:Matern_spectre} implies the result.
\end{proof}

The last lemma in this section concerns the derivative of the correlation matrix $\co$ with respect to $\theta$ for Matérn kernels.
It is used in the proof of Proposition \ref{Prop:ref_prior_behavior}.

\begin{lem} \label{Lem:derivative_spectral_decomposition}
  Using the notations from Lemma \ref{Lem:Matern_spectre}, for a Matérn kernel with smoothness $\nu$, for all $\theta \in (0,+\infty)$ and for any $\bs{\xi} \in \R^n$:

  \begin{equation} \label{Eq:derivative_spectral_decomposition}
    \begin{split}
    \bs{\xi} \trans \left( \frac{d}{d\theta} \co \right) \bs{\xi} 
    &= M_r r \theta^{r-1} I_\theta (\bs{\xi}) + M_r \theta^r \frac{d}{d\theta} I_\theta (\bs{\xi}).
    \end{split}
  \end{equation}
\end{lem}

\begin{proof}
This is a corollary of Lemma \ref{Lem:Matern_spectre}.
\end{proof}

\subsection{Asymptotic expansion of the correlation matrix \texorpdfstring{$\co$}{Lg} } \label{Sec:asymptotic_corr}

Results presented in this appendix are essential to the proof of Theorem \ref{THM:REFERENCE_POSTERIOR_PROPER}. 

The first subsection deals with Squared Exponential and Rational Quadratic kernels, the second with Matérn kernels with noninteger smoothness $\nu$ and the third with Matérn kernels with integer smoothness $\nu$. \medskip

\subsubsection{Rational Quadratic and Squared Exponential kernels} \label{Sec:asymptotic_corr_RQ_SE}

\begin{lem} \label{Lem:RQ_SE_asymptotic_expansion}
When $\theta$ is large enough, if a Rational Quadratic kernel or a Squared Exponential kernel is used, 

\begin{equation} \label{Eq:AD_infinitely_differentiable}
\co = \sum_{k=0}^\infty \frac{a_k}{\theta^{2k}} \bs{D}^{(k)}.
\end{equation}

In the expression above, for every $k \in \N$, $\bs{D}^{(k)}$ is the $n \times n$ matrix with $(i,i')$-th element $\| \bs{x}^{(i)} - \bs{x}^{(i')} \|^{2k}$ and $a_k$ is a non-null real number. To be precise, $a_k = (-1)^k \left( \prod_{l=0}^{\textcolor{blue}{k-1}} (\nu+l) \right) / k!$ for Rational Quadratic kernels and $a_k = (-1)^k / k!$ for the Squared Exponential kernel. \medskip
\end{lem}

Lemma \ref{Lem:RQ_SE_asymptotic_expansion} is used to prove Lemmas \ref{Lem:upper_bound_yfactor_in_likelihood} and \ref{Lem:RQ_SE_framework_proof_theorem} and thus indirectly contributes to the proof of Theorem \ref{THM:REFERENCE_POSTERIOR_PROPER}. It also plays a role in Lemma \ref{Lem:majoration_derivee_RQ_SE}, which in turn contributes to the proof of Proposition \ref{Prop:ref_prior_behavior}.

\begin{proof}
  For all $\nu>0$, the series expansion of the mapping $x \mapsto (1 + x)^{-\nu}$ at $x=0$ has radius of convergence 1. Moreover, the series expansion of the exponential function has infinite radius of convergence. The former fact implies the result for Rational Quadratic kernels, the latter for the Squared Exponential kernel.
\end{proof}

\begin{lem} \label{Lem:RQ_SE_framework_proof_theorem}
  For Rational Quadratic and Squared Exponential kernels, $\bs{W} \trans \co \bs{W}$ can be decomposed as
  
  \begin{equation}
  \bs{W} \trans \co \bs{W} = g(\theta) \left( \bs{W} \trans \nouveauD \bs{W} + g^{\star}(\theta) \bs{W} \trans \nouveauD^{\star} \bs{W} + \bs{R}_g(\theta) \right),
  \end{equation}
  
  where 
  
  \begin{itemize}
  \item $g$ is a positive differentiable function on $(0,+\infty)$;
  \item  $g^{\star}(\theta) = \theta^{-2l}$ with $l \in \Z_+$;
  \item $\bs{R}_g$ is a differentiable mapping from $(0,+\infty)$ to $\mathcal{M}_n$ such that $\| \bs{R}_g(\theta) \| = o(\theta^{-2l})$ and $\| \frac{d}{d\theta} \bs{R}_g(\theta) \| = o(\theta^{-2l-1})$;
  \item $\nouveauD$ and $\nouveauD^{\star}$ are both fixed symmetric matrices;
  \item $\bs{W} \trans \nouveauD \bs{W}$ is non-null.
  \end{itemize}
\end{lem}

Lemma \ref{Lem:RQ_SE_framework_proof_theorem} is used in the proof of Theorem \ref{THM:REFERENCE_POSTERIOR_PROPER}.

\begin{proof}
  We use the notations of Lemma \ref{Lem:RQ_SE_asymptotic_expansion}. This lemma implies that

  \begin{equation}
    \bs{W} \trans \co \bs{W}
    = \sum_{k=0}^\infty \frac{a_k}{\theta^{2k}} \bs{W} \trans \bs{D}^{(k)} \bs{W}.
  \end{equation}
    
  $\co$ is positive definite and the kernel of $\bs{W}$ is trivial so $\bs{W} \trans \co \bs{W}$ is positive definite.
  Let $k_1$ be the smallest nonnegative integer such that $\bs{W} \trans \bs{D}^{(k_1)} \bs{W}$ is non-null.
  Define $\nouveauD:=a_{k_1} \bs{D}^{(k_1)}$.
  \medskip
  
  If $\bs{W} \trans \bs{D}^{(k_1)} \bs{W}$ is nonsingular, then define $k_2:=k_1+1$ and $\nouveauD^{\star}:=a_{k_2} \bs{D}^{(k_2)}$.
  \medskip
  
  If $\bs{W} \trans \bs{D}^{(k_1)} \bs{W}$ is singular, then 
  there must exist an integer $k>k_1$ such that 
  $\bs{W} \trans \bs{D}^{(k)} \bs{W}$ is non-null. 
  Otherwise
  $\bs{W} \trans \co \bs{W} = a_{k_1} \theta^{-2k_1} \bs{W} \trans \bs{D}^{(k_1)} \matrice{W}$, which is absurd since $\bs{W} \trans \co \bs{W}$ is nonsingular and $\bs{W} \trans \bs{D}^{(k_1)} \bs{W}$ is singular.
  Let $k_2$ be the smallest of these integers and define $\nouveauD^{\star}:=a_{k_2} \bs{D}^{(k_2)}$.
  Now, define the mappings $g(\theta)=\theta^{-2k_1}$ and $g^{\star}(\theta)=\theta^{-2l}$ with $l=k_2-k_1$.
  Finally, define
  
  \begin{equation}
  \bs{R}_g(\theta) = g(\theta)^{-1} \sum_{k=k_2+1}^\infty \frac{a_k}{\theta^{2k}} \bs{W} \trans \bs{D}^{(k)} \bs{W}
  =
  \sum_{k=l+1}^\infty \frac{a_{k_1+k}}{\theta^{2k}} \bs{W} \trans \bs{D}^{(k_1+k)} \bs{W}.
  \end{equation}
  
  It turns out that $\|\bs{R}_g(\theta)\|=o(\theta^{-2l})$ and $\|\frac{d}{d\theta} \bs{R}_g(\theta)\| = o(\theta^{-2l-1})$.  
    
\end{proof}

\subsubsection{Matérn kernels with noninteger smoothness \texorpdfstring{$\nu$}{Lg}} \label{Sec:asymptotic_corr_Matern}

\begin{lem} \label{Lem:Matern_noninteger_nu_asymptotic_expansion}
If a Matérn kernel with noninteger smoothness $\nu>0$ 
(whether greater or smaller than 1) is used, 
we can write $\co$ as

\begin{equation} \label{Eq:AD_Matern}
\co = \sum_{k=0}^{\floor{\nu}} \frac{a_k}{\theta^{2k}} \bs{D}^{(k)} +  \frac{a_\nu}{\theta^{2\nu}} \bs{D}^{(\nu)} + \frac{a_{\floor{\nu}+1}}{\theta^{2(\floor{\nu}+1)}} \bs{D}^{(\floor{\nu}+1)} + \bs{R}(\theta).
\end{equation}

Here are the notations used:

\begin{itemize}
\item 
For every $k \in \N$, 
$\bs{D}^{(k)}$ is the $n \times n$ matrix with $(i,i')$-th element $\| \bs{x}^{(i)} - \bs{x}^{(i')} \|^{2k}$. 

\item 
For every $k \in \N$, 
$a_k = (-1)^k \Gamma(\nu-k) \nu^k / \left( k! \Gamma(\nu) \right)$.
\item $\bs{D}^{(\nu)}$ is the $n \times n$ matrix with $(i,i')$-th element $\| \bs{x}^{(i)} - \bs{x}^{(i')} \|^{2\nu}$.
\item $a_\nu = \Gamma(-\nu) \nu^\nu / \Gamma(\nu)$.
\item  $\bs{R}$ is a differentiable mapping from $(0,+\infty)$ to the space of real $n \times n$ matrices $\mathcal{M}_n$ that satisfies
$\| \bs{R}(\theta) \| =  o(\theta^{-2(\floor{\nu}+1)})$ and $\| \frac{d}{d \theta} \bs{R}(\theta) \| =  o(\theta^{-2(\floor{\nu}+1)-1})$ when $\theta \to +\infty$.
\end{itemize}
\end{lem}

Lemma \ref{Lem:Matern_noninteger_nu_asymptotic_expansion} serves to prove Lemmas \ref{Lem:upper_bound_yfactor_in_likelihood} and \ref{Lem:Matern_noninteger_smoothness_framework_proof_theorem} and thus indirectly contributes to the proof of Theorem \ref{THM:REFERENCE_POSTERIOR_PROPER}.
It is also used in the proof of Proposition \ref{Prop:small_order_asymptotic_decomposition}.

\begin{proof}
  According to \citet{AS64} (Equations 9.6.2 and 9.6.10), the modified Bessel function of second kind $\mathcal{K}_\nu$ can be written:

  \begin{align}
    & \mathcal{K}_\nu(z) \nonumber \\
    := &\frac{\pi}{2 \sin(\nu \pi)} 
    \left[
      \left(\frac{z}{2}\right)^{-\nu} 
      \sum_{k=0}^\infty \frac{\left(\frac{z^2}{4}\right)^k}{k! \Gamma(-\nu+k+1)}
    -
      \left(\frac{z}{2}\right)^{\nu} 
      \sum_{k=0}^\infty \frac{\left(\frac{z^2}{4}\right)^k}{k! \Gamma(\nu+k+1)}
      \right]  \nonumber \\
    = &\frac{\pi (z/2)^{-\nu}}{2 \sin(\nu \pi)} 
    \left[      
      \sum_{k=0}^\infty \frac{\left(\frac{z^2}{4}\right)^k}{k! \Gamma(-\nu+k+1)}
    -
      \left(\frac{z}{2}\right)^{2\nu} 
      \sum_{k=0}^\infty \frac{\left(\frac{z^2}{4}\right)^k}{k! \Gamma(\nu+k+1)}
    \right].
  \end{align}
  
  The Matérn kernel with noninteger smoothness $\nu$ applied to $z$ is given by: 
  
  \begin{align}
  &\Gamma(\nu)^{-1} 2^{1-\nu} (2 \sqrt{\nu} z)^\nu \mathcal{K}_\nu(2 \sqrt{\nu} z) \nonumber \\ 
  :=&\frac{\pi}{\Gamma(\nu) \sin(\nu \pi)}
  \left[
    \sum_{k=0}^\infty \frac{\nu^k}{k! \Gamma(-\nu+k+1)} z^{2k}
  -
    \nu^\nu \sum_{k=0}^\infty \frac{\nu^k}{k! \Gamma(\nu+k+1)} z^{2(k+\nu)} 
  \right].
  \end{align}

  Now, for any nonnegative integer $k$,

  \begin{equation}
    \Gamma(\nu-k) \Gamma(-\nu+k+1)
    =
    \frac{\pi}{\sin((\nu-k)\pi)}
    =
    \frac{(-1)^k \pi}{\sin(\nu \pi)}.
  \end{equation}

  Therefore
  \begin{align}
  &\Gamma(\nu)^{-1} 2^{1-\nu} (2 \sqrt{\nu} z)^\nu \mathcal{K}_\nu(2 \sqrt{\nu} z) \nonumber \\ 
  :=&
    \sum_{k=0}^\infty \frac{(-1)^k \Gamma(\nu-k) \nu^k}{\Gamma(\nu) k!} z^{2k}
  - \frac{\pi}{\Gamma(\nu) \sin(\nu \pi)}
    \nu^\nu \sum_{k=0}^\infty \frac{\nu^k}{k! \Gamma(\nu+k+1)} z^{2(k+\nu)} 
   \nonumber 
  \end{align}

  Finally,

  \begin{equation}
    \Gamma(-\nu) \Gamma(\nu+1)
    =
    \frac{\pi}{\sin(-\nu\pi)}
    =
    \frac{- \pi}{\sin(\nu \pi)},
  \end{equation} 

  so we get 

  \begin{align}
    &\Gamma(\nu)^{-1} 2^{1-\nu} (2 \sqrt{\nu} z)^\nu \mathcal{K}_\nu(2 \sqrt{\nu} z)  \label{Eq:decomp_matern_nu_non_entier}\\ 
    :=&
      \sum_{k=0}^{\floor{\nu}} \frac{(-1)^k \Gamma(\nu-k) \nu^k}{\Gamma(\nu) k!} z^{2k}
    + \frac{\Gamma(-\nu) \nu^\nu}{\Gamma(\nu)} z^{2\nu} + \nonumber \\    
    & \sum_{k=\floor{\nu}+1}^\infty \frac{(-1)^k \Gamma(\nu-k) \nu^k}{\Gamma(\nu) k!} z^{2k} 
    - \frac{\pi}{\Gamma(\nu) \sin(\nu \pi)}
      \nu^\nu \sum_{k=1}^\infty \frac{\nu^k}{k! \Gamma(\nu+k+1)} z^{2(k+\nu)}. \nonumber
    \end{align}

The result follows after remembering that the $(i,i')$-th element of $\co$ is given in formula \eqref{Eq:decomp_matern_nu_non_entier} with $z = \left\| \vecteur{x}^{(i)} - \vecteur{x}^{(i')} \right\| / \theta$.
\end{proof}

\begin{lem} \label{Lem:Matern_noninteger_smoothness_framework_proof_theorem}
  For Matérn kernels with noninteger smoothness $\nu$, $\bs{W} \trans \co \bs{W}$ can be decomposed as
  
  \begin{equation} 
  \bs{W} \trans \co \bs{W} = g(\theta) \left( \bs{W} \trans \nouveauD \bs{W} + g^{\star}(\theta) \bs{W} \trans \nouveauD^{\star} \bs{W} + \bs{R}_g(\theta) \right),
  \end{equation}
  
  where 
  
  \begin{itemize}
  \item $g$ is a positive differentiable function on $(0,+\infty)$;
  \item  $g^{\star}(\theta) = \theta^{-2l}$ with $l \in (0,+\infty)$;
  \item $\bs{R}_g$ is a differentiable mapping from $(0,+\infty)$ to $\mathcal{M}_n$ such that $\| \bs{R}_g(\theta) \| = o(\theta^{-2l})$ and $\| \frac{d}{d\theta} \bs{R}_g(\theta) \| = o(\theta^{-2l-1})$ when $\theta \to +\infty$;
  \item $\nouveauD$ and $\nouveauD^{\star}$ are both fixed symmetric matrices;
  \item $\bs{W} \trans \nouveauD \bs{W}$ is non-null.
  \end{itemize}
\end{lem}

Lemma \ref{Lem:Matern_noninteger_smoothness_framework_proof_theorem} is used in the proof of Theorem \ref{THM:REFERENCE_POSTERIOR_PROPER}.

\begin{proof}
  We use the notations of Lemma \ref{Lem:Matern_noninteger_nu_asymptotic_expansion}. This lemma implies that 

  \begin{align}
    \bs{W} \trans \co \bs{W}
    = \sum_{k=0}^{\floor{\nu}} \frac{a_k}{\theta^{2k}} \bs{W} \trans \bs{D}^{(k)} \bs{W} +
     \frac{a_\nu}{\theta^{2\nu}} \bs{W} \trans \bs{D}^{(\nu)} \bs{W} + \nonumber \\
     \frac{a_{\floor{\nu}+1}}{\theta^{2(\floor{\nu}+1)}} \bs{W} \trans \bs{D}^{(\floor{\nu}+1)} \bs{W} +\bs{W} \trans \bs{R}(\theta) \bs{W}. 
  \end{align}
    
  Lemma \ref{Lem:Matern_decay} implies that when $\theta$ is large enough, $\co - \bs{R}(\theta) - \frac{a_{\floor{\nu}+1}}{\theta^{2(\floor{\nu}+1)}} \bs{D}^{(\floor{\nu}+1)}$ is positive definite.
  \medskip
  
  Since the kernel of $\bs{W}$ is trivial, when $\theta$ is large enough, this implies in turn that $\bs{W} \trans \co \bs{W} - \bs{W} \trans \bs{R}(\theta) \bs{W} - \frac{a_{\floor{\nu}+1}}{\theta^{2(\floor{\nu}+1)}} \bs{W} \trans \bs{D}^{(\floor{\nu}+1)} \bs{W}$ is positive definite. If it exists, let $k_1$ be the smallest nonnegative integer smaller than $\nu$ such that $\bs{W} \trans \bs{D}^{(k_1)} \bs{W}$ is non-null and define $\nouveauD:=a_{k_1} \bs{D}^{(k_1)}$ and $g(\theta):=\theta^{-2k_1}$. If not, then define $\nouveauD:=a_{\nu} \bs{D}^{(\nu)}$ and $g(\theta):=\theta^{-2\nu}$. In any case, $\bs{W} \trans \nouveauD \bs{W}$ is non-null. \medskip 
  
  If $k_1$ exists and $\bs{W} \trans \bs{D}^{(k_1)} \bs{W}$ is nonsingular, then define $k_2:=k_1+1$ if $k_1 < \floor{\nu}$ and $k_2=\nu$ if $k_1=\floor{\nu}$. Then define $\nouveauD^{\star}:=a_{k_2} \bs{D}^{(k_2)}$ and $g^{\star}(\theta):=g(\theta)^{-1} \theta^{-2k_2} = \theta^{-2l}$ where $l=k_2-\nu$.  \medskip
  
  If $k_1$ exists and  $\bs{W} \trans \bs{D}^{(k_1)} \bs{W}$ is singular, then there must exist $k \in [\![k_1+1,\floor{\nu}]\!] \cup \{\nu\}$ such that $\bs{W} \trans \bs{D}^{(k)} \bs{W}$ is non-null. Let $k_2$ be the smallest number among all such $k$. Define $\nouveauD^{\star}:=a_{k_2} \bs{D}^{(k_2)}$ and $g^{\star}(\theta):=g(\theta)^{-1} \theta^{-2k_2} = \theta^{-2l}$ where $l=k_2-\nu$. \medskip
  
  If $k_1$ does not exist, then $\bs{W} \trans \bs{D}^{(\nu)} \bs{W}$ is necessarily nonsingular. Define $\nouveauD^{\star} := a_{\floor{\nu}+1} \matrice{D}^{(\floor{\nu}+1)}$ and $g^{\star}(\theta) = g(\theta)^{-1} \theta^{-2(\floor{\nu}+1)}
  = \theta^{-2l}$ where $l= \floor{\nu}+1-\nu $. \medskip
  
  Finally, define 
  
  \begin{equation}
  \bs{R}_g(\theta) := g(\theta)^{-1} g^{\star}(\theta)^{-1} \left( \bs{W} \trans \co \bs{W} - g(\theta) \bs{W} \trans \nouveauD \bs{W} -  g(\theta) g^{\star}(\theta) \bs{W} \trans \nouveauD^{\star} \bs{W} \right).
  \end{equation}

  In all situations, $\| \bs{R}_g(\theta) \| = o(\theta^{-2l})$ and $\| \frac{d}{d\theta} \bs{R}_g(\theta) \| = o(\theta^{-2l-1})$.
  
\end{proof}

\subsubsection{Matérn kernels with integer smoothness \texorpdfstring{$\nu$}{Lg}} \label{Sec:asymptotic_corr_Matern_integer_smoothness}

\begin{lem} \label{Lem:Matern_integer_nu_asymptotic_expansion}
  If a Matérn kernel with integer smoothness $\nu$ is used, we can write $\co$ as  

  \begin{equation} \label{Eq:AD_Matern_integer_nu}
    \co = \sum_{k=0}^{\nu-1} \frac{a_k}{\theta^{2k}} \bs{D}^{(k)} 
    +  \tilde{a}_\nu \left(  \frac{\log(\theta)}{\theta^{2\nu}} \bs{D}^{(\nu)} +   \frac{1}{\theta^{2\nu}} \bs{\tilde{D}}^{(\nu)} \right) 
    + \tilde{a}_{\nu+1} \frac{\log(\theta)}{\theta^{2(\nu+1)}} \bs{D}^{(\nu+1)}
    + \bs{R}(\theta) .
  \end{equation}
  
  \begin{itemize}
  \item $a_k$ and $\bs{D}^{(k)}$ $(k \in \N)$
  have the same definitions as in Lemma \ref{Lem:Matern_noninteger_nu_asymptotic_expansion}.
  \item $\tilde{a}_\nu := (-1)^\nu 2 \nu^\nu / ((\nu-1)! \nu!)$.
  \item $\tilde{a}_{\nu+1} := (-1)^\nu 2 \nu^{\nu+1} / ((\nu-1)! (\nu+1)!)$.
  \item 
  $\bs{\tilde{D}}^{(\nu)}$ is the  $n \times n$ matrix with null diagonal and $(i,i')$-th element ($i\neq i'$) given by 
  $$\| \bs{x}^{(i)} - \bs{x}^{(i')} \|^{2\nu} \left\{-\log \left( \| \bs{x}^{(i)} - \bs{x}^{(i')} \| \right) - \frac{\log(\nu)}{2} - \gamma + \sum_{l=1}^\nu \frac{1}{2 l} \right\} ,$$ where $\gamma$ is Euler's constant. 
  \item $\bs{R}$ is a differentiable mapping from $(0,+\infty)$ to the space of real $n \times n$ matrices $\mathcal{M}_n$ that satisfies
  $\| \bs{R}(\theta) \| =  O(\theta^{-2(\nu+1)})$ and $\| \frac{d}{d\theta} \bs{R}(\theta) \| =  O(\theta^{-2(\nu+1)-1})$ when $\theta \to +\infty$.
  \end{itemize}
\end{lem}

Lemma \ref{Lem:Matern_integer_nu_asymptotic_expansion} serves to prove Lemmas \ref{Lem:upper_bound_yfactor_in_likelihood} and \ref{Lem:Matern_integer_smoothness_framework_proof_theorem} and thus indirectly contributes to the proof of Theorem \ref{THM:REFERENCE_POSTERIOR_PROPER}.
It is also used in the proof of Proposition \ref{Prop:small_order_asymptotic_decomposition}.

\begin{proof}
 Let us combine Equations 9.6.10, 9.6.11 and 6.3.2 from \citet{AS64}. 
 Letting $\gamma$ be Euler's constant, we obtain:

 \begin{align}
   & \mathcal{K}_\nu(z) \nonumber \\
   := &
   \frac{1}{2} \left(\frac{z}{2}\right)^{-\nu} 
   \sum_{k=0}^{\nu-1} \frac{(\nu-k-1)!}{k!} \left(-\frac{z^2}{4}\right)^k \nonumber \\
   & + 
   (-1)^{\nu+1} \log \left( \frac{z}{2} \right) \left(\frac{z}{2}\right)^{\nu} \sum_{k=0}^\infty \frac{1}{k! (\nu+k)!} \left(\frac{z^2}{4}\right)^k \nonumber \\
   & +
   (-1)^\nu \frac{1}{2} \left( \frac{z}{2} \right)^\nu \sum_{k=0}^\infty \left[ -2\gamma + \sum_{l=1}^k l^{-1} + \sum_{l=1}^{\nu + k} l^{-1} \right] \frac{1}{k! (\nu+k)!} \left(\frac{z^2}{4}\right)^k.
 \end{align}

 Let us now compute the value of the Matérn kernel with integer smoothness parameter $\nu$ at $z$:

 \begin{align}
  &\left((\nu-1)!\right)^{-1} 2^{1-\nu} (2 \sqrt{\nu} z)^\nu \mathcal{K}_\nu(2 \sqrt{\nu} z)  \label{Eq:decomp_matern_nu_entier} \\  
  :=&
    \sum_{k=0}^{\nu-1} \frac{(-1)^k (\nu-k-1)! \nu^k}{(\nu-1)! k!} z^{2k} \nonumber \\
  & 
  + \sum_{k=\nu}^\infty \frac{(-1)^{\nu} \nu^k}{(\nu-1)! (k-\nu)! k!} 
  \left[- 2\log (z) - \log(\nu) -2\gamma + \sum_{l=1}^{k - \nu} l^{-1} + \sum_{l=1}^{k} l^{-1} \right] 
  z^{2k} . \nonumber
 \end{align}

The $(i,i')$-th element of the matrix $\co$ is given by Equation \eqref{Eq:decomp_matern_nu_entier} with $z= \left\| \vecteur{x}^{(i)} - \vecteur{x}^{(i')} \right\| / \theta$:

\begin{align}
  &\left((\nu-1)!\right)^{-1} 2^{1-\nu} \left(2 \sqrt{\nu} \frac{\left\| \vecteur{x}^{(i)} - \vecteur{x}^{(i')} \right\| }{\theta} \right)^\nu \mathcal{K}_\nu \left(2 \sqrt{\nu} \frac{\left\| \vecteur{x}^{(i)} - \vecteur{x}^{(i')} \right\| }{\theta} \right) \\  
  :=&
    \sum_{k=0}^{\nu-1} \frac{(-1)^k (\nu-k-1)! \nu^k}{(\nu-1)! k!} \frac{\left\| \vecteur{x}^{(i)} - \vecteur{x}^{(i')} \right\|^{2k} }{\theta^{2k}} \nonumber \\
  & 
  + \sum_{k=\nu}^\infty \frac{(-1)^{\nu} 2 \nu^k}{(\nu-1)! (k-\nu)! k!} 
  \left[ \log(\theta) - \log \left(\left\| \vecteur{x}^{(i)} - \vecteur{x}^{(i')} \right\|\right) - \frac{\log(\nu)}{2} \right. \nonumber \\
  &
  \qquad \qquad \left. -  \gamma + \sum_{l=1}^{k - \nu} \frac{1}{2 l} - \sum_{l=1}^{k} \frac{1}{2 l} \right] 
  \frac{\left\| \vecteur{x}^{(i)} - \vecteur{x}^{(i')} \right\|^{2k} }{\theta^{2k}} . \nonumber
  \end{align}

  The result follows.
\end{proof}

\begin{lem} \label{Lem:Matern_integer_smoothness_framework_proof_theorem}
  For Matérn kernels with integer smoothness $\nu$ and for $\theta \in (1,+\infty)$, $\bs{W} \trans \co \bs{W}$ can be decomposed as
  
  \begin{equation} 
  \bs{W} \trans \co \bs{W} = g(\theta) \left( \bs{W} \trans \nouveauD \bs{W} + g^{\star}(\theta) \bs{W} \trans \nouveauD^{\star} \bs{W} + \bs{R}_g(\theta) \right),
  \end{equation}
  
  where 
  
  \begin{itemize}
  \item $g$ is a positive differentiable function on $(1,+\infty)$;
  \item $\nouveauD$ and $\nouveauD^{\star}$ are both fixed symmetric matrices;
  \item $\bs{W} \trans \nouveauD \bs{W}$ is non-null;
  \item $g^{\star}(\theta) = \log(\theta)^{-1}$ if there exist non-null real numbers $\lambda, \lambda^\star$ such that $\nouveauD= \lambda \matrice{D}^{(\nu)}$ and $\nouveauD^\star = \lambda^\star \matrice{\tilde{D}}^{(\nu)}$ ($\matrice{D}^{(\nu)}$ and $\matrice{\tilde{D}}^{(\nu)}$ are defined in Lemma \ref{Lem:Matern_integer_nu_asymptotic_expansion});
  \item  $g^{\star}(\theta) = \theta^{-2l}$ or $g^{\star}(\theta) = \log(\theta)\theta^{-2l}$ with $l \in (0,+\infty)$ otherwise;
  \item $\bs{R}_g$ is a differentiable mapping from $(0,+\infty)$ to $\mathcal{M}_n$ such that $\| \bs{R}_g(\theta) \| = o(g^\star(\theta))$ and $\| \frac{d}{d\theta} \bs{R}_g(\theta) \| = o(g^{\star \prime}(\theta))$ when $\theta \to +\infty$.
  \end{itemize}
\end{lem}

Lemma \ref{Lem:Matern_integer_smoothness_framework_proof_theorem} is used in the proof of Theorem \ref{THM:REFERENCE_POSTERIOR_PROPER}.

\begin{proof}
We use the notations of Lemma \ref{Lem:Matern_integer_nu_asymptotic_expansion}. This lemma implies that 

\begin{equation}
\begin{split}
 \bs{W} \trans \co \bs{W}
= \sum_{k=0}^{\nu-1} \frac{a_k}{\theta^{2k}} \bs{W} \trans \bs{D}^{(k)} \bs{W} 
+  \frac{\log(\theta)}{\theta^{2\nu}} \tilde{a}_\nu \bs{W} \trans \bs{D}^{(\nu)} \bs{W} 
+ \frac{\tilde{a}_\nu}{\theta^{2\nu}} \bs{W} \trans \bs{\tilde{D}}^{(\nu)} \bs{W} + \nonumber \\
\frac{\log(\theta)}{\theta^{2(\nu+1)}} \tilde{a}_{\nu+1} \bs{W} \trans \bs{D}^{(\nu+1)} \bs{W}
+ \bs{W} \trans \bs{R}(\theta) \bs{W}. 
\end{split}
\end{equation}

Lemma \ref{Lem:Matern_decay} implies that when $\theta$ is large enough, 
$\co - \bs{R}(\theta)$ is positive definite. 
Since the kernel of $\bs{W}$ is trivial, 
this implies in turn that when $\theta$ is large enough, 
$$\bs{W} \trans \co \bs{W} 
- \bs{W} \trans \bs{R}(\theta) \bs{W}
- \frac{\log(\theta)}{\theta^{2(\nu+1)}} \tilde{a}_{\nu+1}
\matrice{W} \trans \matrice{D}^{(\nu+1)} \matrice{W}$$
is positive definite. 
If it exists, let $k_1$ be the smallest nonnegative integer 
smaller or equal to $\nu$ such that 
$\bs{W} \trans \bs{D}^{(k_1)} \bs{W}$ is non-null and define 
$\nouveauD:=a_{k_1} \bs{D}^{(k_1)}$ 
and $g(\theta)=\theta^{-2k_1}$ ($k_1<\nu$) or 
$\nouveauD:=\tilde{a}_\nu \bs{D}^{(\nu)}$ 
and $g(\theta):=\log(\theta) \theta^{-2\nu}$ ($k_1=\nu$). 
If not, then define $\nouveauD:=\tilde{a}_{\nu} \bs{\tilde{D}}^{(\nu)}$ 
and $g(\theta):=\theta^{-2\nu}$. 
In any case, $\bs{W} \trans \nouveauD \bs{W}$ is non-null. \medskip 

If $\bs{W} \trans \nouveauD \bs{W}$ is nonsingular, then

\begin{itemize}
\item either $k_1$ exists and is strictly smaller than $\nu-1$, in which case define $\nouveauD^{\star}:=a_{k_1+1} \bs{D}^{(k_1+1)}$ and $g^{\star}(\theta):=\theta^{-2}$;
\item or $k_1$ exists and is equal to $\nu-1$, in which case define $\nouveauD^{\star}:=\tilde{a}_\nu \bs{D}^{(\nu)}$ and $g^{\star}(\theta):=\log(\theta)\theta^{-2}$;
\item or $k_1$ exists and is equal to $\nu$, in which case define $\nouveauD^{\star}:=\tilde{a}_\nu \bs{\tilde{D}}^{\nu}$ and $g^{\star}(\theta):=\log(\theta)^{-1}$;
\item or $k_1$ does not exist, in which case define $\nouveauD^{\star}:= \tilde{a}_{\nu+1} \matrice{D}^{(\nu+1)}$ and $g^{\star}(\theta):=\log(\theta) \theta^{-2}$.
\end{itemize}

If $\bs{W} \trans \nouveauD \bs{W}$ is singular, then $k_1$ necessarily exists:

\begin{itemize}
\item either $k_1$ is strictly smaller than $\nu$. Then there are two possibilities. The first is that there exists a smallest integer $k_2 \in [\![k_1+1,\nu]\!]$ such that $\bs{W} \trans \bs{D}^{(k_2)} \bs{W}$ is non-null, in which case define $\nouveauD^{\star}:=a_{k_2} \bs{D}^{(k_2)}$ and $g^{\star}(\theta):=\theta^{-2(k_2-k_1)}$ ($k_2<\nu$) or  $\nouveauD^{\star}:=\tilde{a}_\nu \bs{D}^{(\nu)}$ and $g^{\star}(\theta):=\log(\theta)\theta^{-2(\nu-k_1)}$ ($k_2=\nu$). The second is that no such $k_2$ exists, but then $\bs{W} \trans \bs{\tilde{D}}^{(\nu)} \bs{W}$ is necessarily non-null, so define $\nouveauD^{\star}:=\tilde{a}_\nu \bs{\tilde{D}}^{(\nu)}$ and $g^{\star}(\theta):=\theta^{-2(\nu-k_1)}$.
\item or $k_1$ is equal to $\nu$. Then $\bs{W} \trans \bs{\tilde{D}}^{(\nu)} \bs{W}$ is necessarily non-null, so define $\nouveauD^{\star}:=\tilde{a}_\nu \bs{\tilde{D}}^{(\nu)}$ and $g^{\star}(\theta):=\log(\theta)^{-1}$.
\end{itemize}

Finally, define

\begin{equation}
\bs{R}_g(\theta) := g(\theta)^{-1} g^{\star}(\theta)^{-1} \left( \bs{W} \trans \co \bs{W} - g(\theta) \bs{W} \trans \nouveauD \bs{W} -  g(\theta) g^{\star}(\theta) \bs{W} \trans \nouveauD^{\star} \bs{W} \right).
\end{equation}

In all situations, $\| \bs{R}_g(\theta) \| = o(g^{\star}(\theta))$ and $\| \frac{d}{d\theta} \bs{R}_g(\theta) \| = o(g^{\star \prime}(\theta))$.

\end{proof}

\subsection{Behavior of \texorpdfstring{$\de$}{Lg}}

In order to prove Proposition \ref{Prop:ref_prior_behavior}, we need results about the asymptotic behavior of $\de$ whan $\theta \to + \infty$. Lemma \ref{Lem:majoration_derivee_RQ_SE} concerns Rational Quadratic and Squared Exponential kernels, while Lemma \ref{Lem:majoration_derivee_Matern} concerns Matérn kernels.

\begin{lem} \label{Lem:majoration_derivee_RQ_SE}
  For Rational Quadratic and Squared Exponential isotropic correlation kernels,
  consider the decomposition of $\co$ when $\theta$ is large given in Lemma \ref{Lem:RQ_SE_asymptotic_expansion}.
  Letting $N$ be the smallest nonnegative integer such that $\cap_{k=0}^N \Ker \matrice{D}^{(k)}$ is the trivial vector space,
  for large enough $\theta$,
  the matrix $ \theta^{-1} \co - \de$
  is positive definite and $\forall \vecteur{\xi} \in \R^n$, 
  $\vecteur{\xi} \trans \left( \theta^{-1} \co - \de \right) \vecteur{\xi} \leqslant 2(N+1) \theta^{-1} \vecteur{\xi} \trans \co \vecteur{\xi}$.
\end{lem}

\begin{proof}
  First, notice that as long as $\theta$ is large enough, Lemma \ref{Lem:developpement_asymptotique_defini_positif} is applicable with: 
  \begin{itemize}
    \item $\co$ playing the role of $\matrice{M}(\theta^{-1})$;
    \item $\matrice{D}^{(k)}$ playing the role of $\matrice{A}_k$ for every nonnegative integer $k \leqslant N$;
    \item $\frac{a_k}{\theta^{2k}}$ playing the role of $a_k(\theta^{-1})$ for every nonnegative integer $k \leqslant N$;
    \item $\sum_{k=N+1}^\infty \frac{a_k}{\theta^{2k}} \matrice{D}^{(k)}$ playing the role of $\matrice{B}(\theta^{-1})$.
  \end{itemize}
  Lemma \ref{Lem:developpement_asymptotique_defini_positif} is applicable because $\cap_{k=0}^N \Ker \matrice{D}^{(k)}$ is the trivial vector space. Lemma \ref{Lem:developpement_asymptotique_defini_positif} in turn makes Lemma  \ref{Lem:encadrement_developpement_asymptotique} applicable. \medskip 

  Define the vector subspaces $V_0,\dots,V_N$ with respect to $\matrice{D}^{(0)},\dots, \matrice{D}^{(N)}$ as required by Lemma \ref{Lem:encadrement_developpement_asymptotique}. 
  For any $\epsilon>0$, as long as $\theta$ is large enough, for all $\vecteur{\xi} = \vecteur{\xi}_0 + \dots + \vecteur{\xi}_N \in   \R^n = V_0 \overset{\perp}{\oplus} \dots \overset{\perp}{\oplus} V_{N}$,

  \begin{equation} \label{Eq:minoration_corr}
    \vecteur{\xi} \trans \co \vecteur{\xi}
    \geqslant
    (1-\epsilon) \sum_{k=0}^N \frac{a_k}{\theta^{2k}} \vecteur{\xi}_k \matrice{D}^{(k)} \vecteur{\xi}_k.
  \end{equation}

  Now let us consider the derivative $\de$. For large enough $\theta$:

  \begin{equation}
    \de = 
    \sum_{k=1}^\infty -2k \frac{a_k}{\theta^{2k+1}} \matrice{D}^{(k)}.
  \end{equation}

  Therefore
  
  \begin{equation}
    \theta^{-1} \co - \de
    =
    \theta^{-1} \sum_{k=0}^\infty (1+2k) \frac{a_k}{\theta^{2k}} \matrice{D}^{(k)}.
  \end{equation}

  Once again, Lemma \ref{Lem:encadrement_developpement_asymptotique} is applicable. For any $\epsilon>0$, as long as $\theta$ is large enough, for all $\vecteur{\xi} = \vecteur{\xi}_0 + \dots + \vecteur{\xi}_N \in   \R^n = V_0 \overset{\perp}{\oplus} \dots \overset{\perp}{\oplus} V_{N}$,

  \begin{align}
    \vecteur{\xi} \trans \left( \theta^{-1} \co - \de \right) \vecteur{\xi}
    & \leqslant
    (1+\epsilon) \sum_{k=0}^N (1+2k) \frac{a_k}{\theta^{2k}} \vecteur{\xi}_k \matrice{D}^{(k)} \vecteur{\xi}_k \nonumber \\
    & \leqslant
    (1+\epsilon) (1+2N) \sum_{k=0}^N \frac{a_k}{\theta^{2k}} \vecteur{\xi}_k \matrice{D}^{(k)} \vecteur{\xi}_k. \label{Eq:majoration_F}
  \end{align}  

  Combining Equations \eqref{Eq:minoration_corr} and \eqref{Eq:majoration_F} yields that for any $\epsilon>0$, as long as $\theta$ is large enough, for all $\vecteur{\xi} \in \R^n$,

  \begin{equation}
    \vecteur{\xi} \trans \left(\theta^{-1} \co - \de\right) \vecteur{\xi}
    \leqslant
    \frac{1+\epsilon}{1-\epsilon} (1+2N) \vecteur{\xi} \trans \co \vecteur{\xi}.
  \end{equation}

  If $\epsilon$ is taken small enough, $\frac{1+\epsilon}{1-\epsilon} (1+2N) \leqslant 2(N+1)$, which yields the result.
\end{proof}

\begin{lem} \label{Lem:majoration_derivee_Matern}
  For Matérn kernels, for all $\theta \in (0,+\infty)$, the matrix $r \theta^{-1} \co - \de$ is symmetric positive definite.
  Furthermore, for any $\bs{\xi} \in \R^n$,
  \begin{equation}
    \bs{\xi} \trans \left( r \theta^{-1} \co - \de \right)\bs{\xi} \leqslant (2\nu+r) \theta^{-1} \bs{\xi} \trans \co \bs{\xi}.
  \end{equation}
\end{lem}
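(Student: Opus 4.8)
The plan is to exploit the spectral representation of $\co$ from Equation (\ref{Eq:spectral_decomposition}) together with its $\theta$-derivative (\ref{Eq:derivative_spectral_decomposition}), specialized to the Matérn family through the integral $I_\theta^M$ of Table \ref{Tab:Fourier}. Symmetry of $\bs{F}_\theta$ is immediate, since it is a combination of the symmetric matrices $\co$ and $\de$. For the rest, I would start from
\[
\bs{\xi}\trans \bs{F}_\theta \bs{\xi} = r\theta^{-1}\,\bs{\xi}\trans\co\bs{\xi} - \bs{\xi}\trans\left(\de\right)\bs{\xi},
\]
and substitute $\bs{\xi}\trans\co\bs{\xi} = M_r\theta^r I_\theta^M(\bs{\xi})$ and the expression for $\bs{\xi}\trans(\de)\bs{\xi}$ from (\ref{Eq:derivative_spectral_decomposition}). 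The two terms equal to $M_r r\theta^{r-1}I_\theta^M(\bs{\xi})$ cancel exactly, leaving the clean identity
\[
\bs{\xi}\trans\bs{F}_\theta\bs{\xi} = -M_r\theta^r \frac{d}{d\theta}I_\theta^M(\bs{\xi}).
\]
Both assertions of the lemma are thereby reduced to an analysis of $\frac{d}{d\theta}I_\theta^M(\bs{\xi})$.

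Next I would differentiate $I_\theta^M(\bs{\xi})$ under the integral sign, writing $a(\bs{s}):=\left|\sum_{j=1}^n\xi_j e^{i\langle \bs{s}|\bs{x}^{(j)}\rangle}\right|^2\geqslant 0$ for the nonnegative weight. This interchange is legitimate by dominated convergence, the integrand and its $\theta$-derivative being dominated locally uniformly in $\theta$. It yields
\[
\frac{d}{d\theta}I_\theta^M(\bs{\xi}) = -(r+2\nu)\,\theta\int_{\R^r}\|\bs{s}\|^2\left(4\nu+\theta^2\|\bs{s}\|^2\right)^{-\frac{r}{2}-\nu-1} a(\bs{s})\, d\bs{s},
\]
so that $\bs{\xi}\trans\bs{F}_\theta\bs{\xi} = M_r(r+2\nu)\,\theta^{r+1}\int_{\R^r}\|\bs{s}\|^2(4\nu+\theta^2\|\bs{s}\|^2)^{-\frac{r}{2}-\nu-1}a(\bs{s})\,d\bs{s}\geqslant 0$, giving positive semi-definiteness. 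For strict positivity when $\bs{\xi}\neq\bs{0}_n$ I would reuse the argument of Lemma \ref{Lem:Bochner+}: since $\bs{x}^{(1)},\dots,\bs{x}^{(n)}$ are distinct, the trigonometric sum vanishes only on a Lebesgue-negligible set, so $a$ is positive almost everywhere and the integral is strictly positive. Hence $\bs{F}_\theta$ is positive definite.

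For the upper bound I would compare the last display with $(2\nu+r)\theta^{-1}\bs{\xi}\trans\co\bs{\xi}=M_r(2\nu+r)\theta^{r-1}\int_{\R^r}(4\nu+\theta^2\|\bs{s}\|^2)^{-\frac{r}{2}-\nu}a(\bs{s})\,d\bs{s}$. After canceling the common positive factor $M_r(2\nu+r)\theta^{r-1}$, the desired inequality follows from the pointwise bound
\[
\theta^2\|\bs{s}\|^2\left(4\nu+\theta^2\|\bs{s}\|^2\right)^{-\frac{r}{2}-\nu-1}\leqslant\left(4\nu+\theta^2\|\bs{s}\|^2\right)^{-\frac{r}{2}-\nu},
\]
which is equivalent to $\theta^2\|\bs{s}\|^2\leqslant 4\nu+\theta^2\|\bs{s}\|^2$ and holds because $4\nu>0$. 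Integrating this pointwise inequality against the nonnegative weight $a(\bs{s})$ gives $\bs{\xi}\trans\bs{F}_\theta\bs{\xi}\leqslant(2\nu+r)\theta^{-1}\bs{\xi}\trans\co\bs{\xi}$. The only genuinely delicate point is the justification of differentiating under the integral; the remainder is the exact cancellation producing the identity for $\bs{\xi}\trans\bs{F}_\theta\bs{\xi}$ and a pointwise comparison of two integrands, both of which are routine.
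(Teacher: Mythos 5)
Your proposal is correct and follows essentially the same route as the paper: both reduce $\bs{\xi}\trans\bs{F}_\theta\bs{\xi}$ to $-M_r\theta^r\frac{d}{d\theta}I_\theta^M(\bs{\xi})$ via Equations (\ref{Eq:spectral_decomposition}) and (\ref{Eq:derivative_spectral_decomposition}), differentiate under the integral, and conclude from the pointwise bound $\theta^2\|\bs{s}\|^2/(4\nu+\theta^2\|\bs{s}\|^2)\leqslant 1$. You are in fact slightly more explicit than the paper on the strict positivity (via the Lemma \ref{Lem:Bochner+} argument) and on justifying the differentiation under the integral sign, but the substance is identical.
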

  
\begin{proof}
  We adopt the notations of the proof of Lemma \ref{Lem:Matern_spectre}. \medskip
  
  For any $\theta \in (0,+\infty)$, for any $\bs{\xi} = (\xi_1,...,\xi_n) \trans \in \R^n$, 
  
  \begin{equation}
  \begin{split}
  \frac{d}{d\theta} I_\theta (\bs{\xi}) &= (-2)\left(\frac{r}{2} + \nu\right) \theta
  \int_{\R^r} \| \bs{s} \|^2 \left( 4 \nu + \theta^2 \| \bs{s} \|^2 \right)^{-\frac{r}{2} - \nu - 1} \left| \sum_{j=1}^n \xi_j e^{i \langle \left. \bs{s} \right| \bs{x}^{(j)} \rangle } \right|^2 d \bs{s} \\
  &= -(2\nu + r) \theta^{-1}
  \int_{\R^r} \frac{\theta^2 \| \bs{s} \|^2}{4 \nu + \theta^2 \| \bs{s} \|^2} \left( 4 \nu + \theta^2 \| \bs{s} \|^2 \right)^{-\frac{r}{2} - \nu} \left| \sum_{j=1}^n \xi_j e^{i \langle \left. \bs{s} \right| \bs{x}^{(j)} \rangle } \right|^2 d \bs{s}.
  \end{split}
  \end{equation}
  
  Since $\theta^2 \| \bs{s} \|^2 \leqslant 4\nu + \theta^2 \| \bs{s} \|^2$, for any $\theta \in (0,+\infty)$ and any non-null vector $\bs{\xi} \in \R^n$,
  
  \begin{equation} \label{Eq:Ineq_deriv_Matern}
  0 < - \frac{d}{d\theta} I_\theta (\bs{\xi})
  \leqslant (2\nu + r) \theta^{-1} I_\theta (\bs{\xi}).
  \end{equation}
  
  Combining Lemma \ref{Lem:Matern_spectre} and  Lemma \ref{Lem:derivative_spectral_decomposition} with Equation (\ref{Eq:Ineq_deriv_Matern}) yields the result.
\end{proof}

\bibliography{biblio}

\end{document}